\newtheorem{remark}[theorem]{Remark} 
\title{Superconvergence of high order finite difference schemes based on variational formulation for elliptic equations   
\thanks{H. Li and X. Zhang  were supported by the NSF grant DMS-1522593 and DMS-1913120.}
}
\author{Hao Li\thanks{Department of Mathematics,
Purdue University,
150 N. University Street,
West Lafayette, IN 47907-2067
  (\email{li2497@purdue.edu}, \email{zhan1966@purdue.edu}).}
\and Xiangxiong Zhang \footnotemark[2]}
\begin{document}
\maketitle

\begin{abstract}
The classical continuous finite element method with Lagrangian $Q^k$ basis reduces to a finite difference scheme when all the integrals are replaced by 
the $(k+1)\times (k+1)$ Gauss-Lobatto quadrature.
We prove that this finite difference scheme is $(k+2)$-th order accurate in the discrete 2-norm for an elliptic equation with   Dirichlet boundary conditions, which is a superconvergence result of function values. 
\end{abstract}
\begin{keywords}
Superconvergence, high order accurate discrete Laplacian, elliptic equations, finite difference formulation based on variational formulation, Gauss-Lobatto quadrature.
\end{keywords}
\begin{AMS}
 	65N30,   	65N15,	65N06
\end{AMS}

\section{Introduction}
\subsection{Motivation}
In this paper we consider solving a two-dimensional elliptic equation with smooth coefficients on 
a rectangular domain by high order finite difference schemes, which are constructed via using  suitable quadrature in the classical continuous finite element method on a rectangular mesh. Consider the following model problem as an example: a variable coefficient Poisson equation $-\nabla(a(\mathbf x)\nabla u)=f, a(\mathbf x)>0$ on a square 
domain $\Omega=(0,1)\times(0,1)$ with homogeneous Dirichlet boundary conditions. 
The  variational form is to find $u\in H_0^1(\Omega)=\{v\in H^1(\Omega): v|_{\partial \Omega}=0\}$ satisfying
\[
 A(u,v)=(f,v),\quad \forall v\in H_0^1(\Omega),
\]
 where  
$A(u,v)=\iint_{\Omega} a\nabla u \cdot \nabla v dx dy$, $ (f,v)=\iint_{\Omega}fv dxdy.$
Let $h$ be the mesh size of an uniform rectangular mesh and  $V_0^h\subseteq H^1_0(\Omega)$ be the continuous finite element space consisting of piecewise $Q^k$ polynomials (i.e., tensor product of piecewise polynomials of degree $k$), then the $C^0$-$Q^k$ finite element solution is defined as $u_h\in V_0^h$ satisfying 
\begin{equation}\label{intro-scheme1}
 A(u_h,v_h)=(f,v_h),\quad \forall v_h\in V_0^h.\end{equation}

  Standard error estimates of \eqref{intro-scheme1} are
 $\|u-u_h\|_{1}\leq C h^{k}\|u\|_{k+1}$ and $\|u-u_h\|_{0}\leq C h^{k+1}\|u\|_{k+1}$ where $\|\cdot\|_k$ denotes $H^k(\Omega)$-norm, 
 see \cite{ciarlet1991basic}.
 For $k\geq 2$, $\mathcal O (h^{k+1})$ superconvergence for the gradient at Gauss quadrature points and $\mathcal O (h^{k+2})$ superconvergence for functions values at Gauss-Lobatto quadrature points were proven for one-dimensional case in  \cite{lesaint1979superconvergence, chen1979superconvergent, bakker1982note}
 and for two-dimensional case in \cite{douglas1974estimate, wahlbin2006superconvergence, chen2001structure, lin1996}.
 
When implementing the scheme \eqref{intro-scheme1}, integrals are usually approximated by quadrature.  The most convenient implementation is to use $(k+1)\times (k+1)$ Gauss-Lobatto quadrature because they not only are superconvergence points but also can define all the degree of freedoms of Lagrangian $Q^k$ basis.  See Figure \ref{mesh} for the case $k=2$. 
Such a quadrature scheme can be denoted as finding $u_h\in V_0^h$ satisfying
\begin{equation}\label{intro-scheme2}
 A_h(u_h,v_h)=\langle f,v_h\rangle_h,\quad \forall v_h\in V_0^h,\end{equation}
where $A_h(u_h,v_h)$  and $\langle f,v_h\rangle_h$ denote using tensor product of $(k+1)$-point Gauss-Lobatto quadrature for integrals $A(u_h,v_h)$ and $(f,v_h)$ respectively.

  \begin{figure}[h]
 \subfigure[The quadrature points and a FEM mesh]{\includegraphics[scale=0.8]{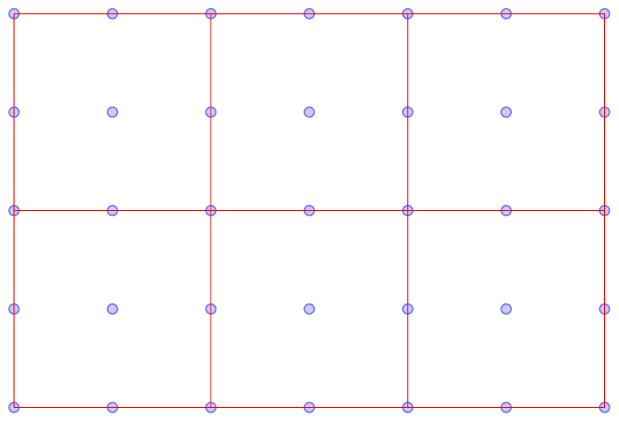} }
 \hspace{.6in}
 \subfigure[The corresponding finite difference grid]{\includegraphics[scale=0.8]{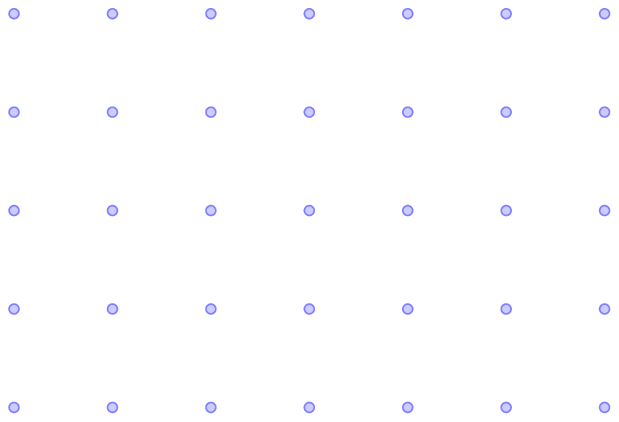}}
\caption{An illustration of Lagrangian $Q^2$ element and the $3\times3$ Gauss-Lobatto quadrature. }
\label{mesh}
 \end{figure}

 It is well known that many classical finite difference
schemes are exactly  finite element methods with specific  quadrature scheme, see \cite{ciarlet1991basic}.
 We will write scheme \eqref{intro-scheme2} as an exact finite difference type scheme in Section \ref{sec-fd} for $k=2$. Such a finite difference scheme not only provides an efficient way for assembling the stiffness matrix especially for a variable coefficient problem, but also with has advantages inherited from the variational formulation, such as symmetry of stiffness matrix and easiness of handling boundary conditions in high order schemes. This is the variational approach to construct a high order accurate finite difference scheme .

  Classical quadrature error estimates imply that standard finite element error estimates still hold for \eqref{intro-scheme2}, see \cite{ciarlet1972combined, ciarlet1991basic}. 
 The focus of this paper is to prove that the superconvergence of function values at Gauss-Lobatto points still holds. To be more specific, for Dirichlet type boundary conditions, we will show that \eqref{intro-scheme2} is a $(k+2)$-th order accurate finite difference scheme in the discrete 2-norm under suitable smoothness assumptions on the exact solution and the coefficients. 
 
 In this paper, the main motivation to study superconvergence is to use it for constructing  $(k+2)$-th order accurate finite difference schemes. For such a task, superconvergence points should define all degree of freedoms over the whole computational domain including boundary points. For  high order finite element methods, this seems possible only on  quite structured meshes such as
 rectangular meshes for a rectangular domain and equilateral triangles for a hexagonal domain, even though there are numerous superconvergence  results  for interior cells in unstructured meshes. 
 \subsection{Related work and difficulty in using standard tools}
 
 To illustrate our perspectives and difficulties, we focus on the case $k=2$ in the following. 
 For computing the bilinear form in the scheme \eqref{intro-scheme1}, another convenient implementation is to replace the smooth coefficient $a(x,y)$ by a piecewise $Q^2$ polynomial $a_I(x,y)$ obtained by interpolating $a(x,y)$ at the quadrature points in each cell shown in Figure \ref{mesh}. Then one can compute the integrals in the bilinear form exactly since the integrand is a polynomial. Superconvergence of function values for such an approximated coefficient scheme was proven in  \cite{li2019superconvergence} and the proof can be easily extended to higher order polynomials and three-dimensional cases. This result might seem surprising since interpolation error $a(x,y)-a_I(x,y)$ is of third order. On the other hand, all the tools used in \cite{li2019superconvergence}  are standard in the literature. 
 
From a practical point of view, \eqref{intro-scheme2}  is more interesting since it gives a genuine finite difference scheme. 
It is straightforward to use standard tools in the literature for showing superconvergence still holds for accurate enough quadrature. Even though the $3\times 3$ Gauss-Lobatto quadrature is fourth order accurate, the standard quadrature error estimates cannot be used directly to establish the fourth order accuracy of \eqref{intro-scheme2},
as will be explained in detail in Remark \ref{rmk-consistency} in Section \ref{sec-refinedconsistency}.

  We can also rewrite \eqref{intro-scheme2} for $k=2$ as a  finite difference scheme but its local truncation error is only second order as will be shown in Section \ref{sec-Laplacian}. The phenomenon that truncation errors have lower orders was named {\it supraconvergence} in the literature. The second order  truncation error makes it  difficult to establish the fourth order accuracy following any traditional finite difference analysis approaches. 
 
 To construct high order finite difference schemes from variational formulation, 
we can also consider finite element method with $P^2$ basis on a regular triangular mesh (two adjacent triangles form a rectangle) \cite{whiteman1975lagrangian}. Superconvergence of function values in $C^0$-$P^2$ finite element method at the three vertices and three edge centers can  be proven  \cite{chen2001structure, wahlbin2006superconvergence}. See also \cite{huang2008superconvergence}.  Even though the quadrature using only three edge centers is third order accurate,  error cancellations happen on two  adjacent triangles
forming a rectangle, thus fourth order accuracy of the corresponding finite difference scheme is still possible. However,  extensions to construct higher order finite difference schemes are much more difficult. 
  
\subsection{Contributions and organization of the paper}

The main contribution is to give the proof of the $(k+2)$-th order accuracy of  \eqref{intro-scheme2}, which is an easy construction of high order finite difference schemes for variable coefficient problems.
An important step is to obtain desired sharp quadrature estimate for the bilinear form, for which it is necessary to count in quadrature error cancellations between neighboring cells.  Conventional quadrature estimating tools 
such as the Bramble-Hilbert Lemma only give the sharp estimate on each cell thus cannot be used directly. 
A key technique in this paper is to apply the Bramble-Hilbert Lemma after integration by parts on proper interpolation polynomials to allow error cancellations.

 The paper is organized as follows. In Section \ref{sec-preliminaries}, we introduce our notations and assumptions. 
In Section \ref{sec-consistency}, standard quadrature estimates are reviewed. 
  Superconvergence of bilinear forms with quadrature is shown in Section \ref{sec-bilinear}. Then we prove the main result for homogeneous Dirichlet boundary conditions in Section \ref{sec-main} and for nonhomogeneous Dirichlet boundary conditions in Section \ref{sec-nonhomogeneous-dirichlet}.
 Section \ref{sec-fd} provides a simple finite difference implementation of \eqref{intro-scheme2}. 
 Section \ref{sec-test} contains numerical tests. Concluding remarks are given in Section \ref{sec-conclusion}.
  
\section{Notations and assumptions}
\label{sec-preliminaries}
\subsection{Notations and basic tools}

We will use the same notations as in \cite{li2019superconvergence}: 
\begin{itemize}
 \item We only consider a rectangular domain $\Omega=(0,1)\times(0,1)$ with its boundary denoted as $\partial \Omega$. 
 \item Only for convenience, we assume $\Omega_h$ is an uniform rectangular mesh for $\bar\Omega$ and $e=[x_e-h,x_e+h]\times [y_e-h,y_e+h]$ denotes any cell in $\Omega_h$ with  cell center $(x_e,y_e)$. {\bf The assumption of an uniform mesh is not essential to the discussion of superconvergence}.
  \item $Q^k(e)=\left\{p(x,y)=\sum\limits_{i=0}^k\sum\limits_{j=0}^k p_{ij} x^iy^j, (x,y)\in e\right\}$ is the set of 
 tensor product of polynomials of degree $k$ on a cell $e$.
\item $V^h=\{p(x,y)\in C^0(\Omega_h): p|_e \in Q^{k}(e),\quad \forall e\in \Omega_h\}$ denotes the continuous piecewise $Q^{k}$ finite element space on $\Omega_h$.
\item $V^h_0=\{v_h\in V^h: v_h=0 \quad\mbox{on}\quad \partial \Omega \}.$
\item The norm and seminorms for $W^{k,p}(\Omega)$ and $1\leq p<+\infty$, with standard modification for $p=+\infty$:
$$
 \|u\|_{k,p,\Omega}=\left(\sum\limits_{i+j\leq  k}\iint_{\Omega}|\partial_x^i\partial_y^ju(x,y)|^pdxdy\right)^{1/p},
 $$
$$
 |u|_{k,p,\Omega}=\left(\sum\limits_{i+j= k}\iint_{\Omega}|\partial_x^i\partial_y^ju(x,y)|^pdxdy\right)^{1/p},
 $$
$$
 [u]_{k,p,\Omega}=\left(\iint_{\Omega}|\partial_x^k u(x,y)|^pdxdy+\iint_{\Omega}|\partial_y^k u(x,y)|^p dxdy\right)^{1/p}.
 $$
 Notice that $[u]_{k+1,p,\Omega}=0$ if $u$ is a $Q^k$ polynomial.
 \item For simplicity, sometimes we may use $\|u\|_{k,\Omega}$, $|u|_{k,\Omega}$ and $[u]_{k,\Omega}$ denote 
 norm and seminorms for $H^k(\Omega)=W^{k,2}(\Omega)$.
 \item When there is no confusion, $\Omega$ may be dropped in the norm and seminorms, e.g., $\|u\|_k=\|u\|_{k,2,\Omega}$.
 \item For any $v_h\in V^h$, $1\leq p<+\infty $ and $k\geq 1$, $$\|v_h\|_{k,p,\Omega}:= \left(\sum_e\|v_h\|_{k,p,e}^p\right)^{\frac1p}, \quad 
 |v_h|_{k,p,\Omega}:= \left(\sum_e|v_h|_{k,p,e}^p\right)^{\frac1p},\quad  [v_h]_{k,p,\Omega}:= \left(\sum_e[v_h]_{k,p,e}^p\right)^{\frac1p}.$$
\item Let $Z_{0,e}$ denote the set of $(k+1)\times(k+1)$ Gauss-Lobatto points on a cell $e$.
\item $Z_0=\bigcup_e Z_{0,e}$ denotes all Gauss-Lobatto points in the mesh $\Omega_h$.
\item 
Let $\|u\|_{2,Z_0}$ and $\|u\|_{\infty,Z_0}$
denote the discrete 2-norm and the maximum norm over $Z_0$ respectively:
\[\|u\|_{2,Z_0}=\left[h^2\sum_{(x,y)\in Z_0} |u(x,y)|^2\right]^{\frac12},\quad \|u\|_{\infty,Z_0}=\max_{(x,y)\in Z_0} |u(x,y)|.\]
\item For a continuous function $f(x,y)$, let $f_I(x,y)$ denote its piecewise $Q^k$ Lagrange interpolant at $Z_{0,e}$ on each cell $e$, i.e., $f_I\in V^h$ satisfies:
\[f(x,y)=f_I(x,y), \quad \forall (x,y)\in Z_0. \]
\item $P^k(t)$ denotes the polynomial of degree $k$ of variable $t$. 
\item 
$(f,v)_e$ denotes the inner product in  $L^2(e)$ and $(f,v)$ denotes the inner product in $L^2(\Omega)$:
\[(f,v)_e=\iint_{e} fv\, dxdy,\quad (f,v)=\iint_{\Omega} fv\, dxdy=\sum_e (f,v)_e.\]
\item $\langle f,v\rangle_{e,h}$ denotes the approximation to $(f,v)_e$ by using $(k+1)\times (k+1)$-point Gauss Lobatto quadrature for integration over cell $e$. 
\item $\langle f,v\rangle_h$ denotes the approximation to $(f,v)$ by using $(k+1)\times(k+1)$-point Gauss Lobatto
quadrature for integration over each cell $e$. 
\item $\hat K=[-1,1]\times [-1,1]$ denotes a reference cell.
\item For $f(x,y)$ defined on $e$, consider $\hat f(s, t)=f(sh+ x_e,t h+ y_e)$  defined on $\hat K$. 
Let $\hat f_I$ denote the $Q^k$ Lagrange interpolation of $\hat f$ at the $(k+1) \times (k+1)$  Gauss Lobatto quadrature points on $\hat K$. 
\item $(\hat f,\hat v)_{\hat K}=\iint_{\hat K} \hat f\hat v\, dsdt.$
\item $\langle \hat f,\hat v\rangle_{\hat K}$
   denotes the approximation to $(\hat f,\hat v)_{\hat K}$ by using $(k+1) \times (k+1)$-point Gauss-Lobatto quadrature.
 \item On the reference cell $\hat K$, for convenience we use the superscript $h$ over the $ds$ or $dt$ to denote we use $(k+1)$-point Gauss-Lobatto quadrature on the corresponding variable. For example, 
$$\iint_{\hat K} \hat f d^hsdt=\int_{-1}^{1}[w_1\hat f(-1,t)+w_{k+1}\hat f(1,t)+ \sum_{i=2}^k w_i\hat f(x_i,t)] dt.$$ Since $(\hat f\hat v)_I$ coincides with $\hat f\hat v$ at the quadrature points, 
we have $$\iint_{\hat K} (\hat f\hat v)_I dxdy=\iint_{\hat K} (\hat f\hat v)_I d^hxd^hy=\iint_{\hat K} \hat f\hat v d^hxd^hy=\langle \hat f,\hat v\rangle_{\hat K}.$$
\end{itemize}
  
  \bigskip
  
The following are commonly used
  tools and facts:
  \begin{itemize}
 \item For two-dimensional problems, 
 $$h^{k-2/p}|v|_{k,p,e}=|\hat v|_{k,p,\hat K},\quad  h^{k-2/p}[v]_{k,p,e}=[\hat v]_{k,p,\hat K}, \quad 1\leq p\leq \infty.$$
 \item Inverse estimates for polynomials:
 \begin{equation}
 \|v_h\|_{k+1, e}\leq C h^{-1} \|v_h\|_{k, e},\quad  \forall v_h \in V^h, k\geq 0. 
\label{inverseestimate}
 \end{equation}

 \item Sobolev's embedding in two and three dimensions: $H^{2}(\hat K)\hookrightarrow C^0(\hat K)$. 
 \item The embedding implies  
  $$\|\hat  f\|_{0,\infty,\hat K}\leq C \|\hat  f\|_{k,2, \hat K},\quad \forall \hat f\in H^{k}(\hat K), k\geq 2,$$
 $$\|\hat  f\|_{1,\infty,\hat K}\leq C \|\hat  f\|_{k+1,2, \hat K}, \quad \forall\hat f\in H^{k+1}(\hat K), k\geq 2.$$
   \item Cauchy-Schwarz inequalities in two dimensions:
  \[\sum_e \|u\|_{k,e}\|v\|_{k,e}\leq \left(\sum_e \|u\|^2_{k,e}\right)^{\frac12}\left(\sum_e \|v\|^2_{k,e}\right)^{\frac12}, \quad
  \|u\|_{k,1,e}=\mathcal O(h) \|u\|_{k,2,e}.\]
  \item Poincar\'{e} inequality: let $\bar u$ be the average of $u\in H^1(\Omega)$ on $\Omega$, then 
  \[ |u-\bar u|_{0,p,\Omega}\leq C |\nabla u|_{0,p,\Omega}, \quad p\geq 1.\]
  If $\bar u$ is the average of $u\in H^1(e)$ on a cell $e$, we have 
    \[ |u-\bar u|_{0,p,e}\leq C h |\nabla u|_{0,p,e}, \quad p\geq 1.\]
 \item 
For $k\geq 2$, the $(k+1)\times(k+1)$ Gauss-Lobatto quadrature is exact for 
integration of polynomials of degree $2k-1\geq k+1$ on $\hat K$.
\item  Define the projection operator $\hat{\Pi}_1: \hat u \in L^1(\hat K)\rightarrow \hat \Pi_1\hat u\in Q^1(\hat K)$ by 
 \begin{equation}
\iint_{\hat K} (\hat{\Pi}_1 \hat{u} ) w dsdt= \iint_{\hat K} \hat{u} w dsdt,\forall w\in Q^1(\hat K).
\label{projection1}
 \end{equation}
 Notice that all degree of freedoms of $\hat{\Pi}_1 \hat{u}$ can be represented as a linear combination of $\iint_{\hat K} \hat u(s,t) p(s,t)dsdt$ for $p(s,t)=1,s,t,st$, 
 thus $\hat{\Pi}_1$ is a continuous linear mapping from $L^2(\hat K)$ to $H^1(\hat K)$ (or $H^2(\hat K)$) by Cauchy-Schwarz inequality $|\iint_{\hat K} \hat u(s,t) \hat p(s,t)dsdt|\leq \|\hat u\|_{0,2,\hat K}\|\hat p\|_{0,2,\hat K}\leq C \|\hat u\|_{0,2,\hat K}$.
  \end{itemize}

%

\subsection{Coercivity and elliptic regularity}
We consider the elliptic variational problem  of finding $u\in H_0^1(\Omega)$ to satisfy
\begin{equation}
A(u,v): =\iint_\Omega (\nabla v^T \mathbf a \nabla u +\mathbf b\nabla u v + c u v) \,dx dy =(f,v), \forall v\in H^1_0(\Omega),
\label{bilinearform}
\end{equation}
where $\mathbf a=\begin{pmatrix}
               a^{11} & a^{12}\\
               a^{21} & a^{22}
              \end{pmatrix}
$ is positive definite and $\mathbf b=[b^1 \quad b^2]$. 
Assume the coefficients $\mathbf a$, $\mathbf b$ and $c$ are smooth with  uniform upper bounds, thus $A(u,v)\leq C\|u\|_1\|v\|_1$ for any $u, v\in H^1_0(\Omega)$. We denote $\lambda_{\mathbf a}$ as the smallest eigenvalues of $\mathbf a$.
Assume $\lambda_{\mathbf a}$ has a positive lower bound and $ \nabla \cdot \mathbf b\leq 2c $, so that coercivity of the bilinear form can be easily achieved. 
Since
$$(\mathbf b\cdot \nabla u,v)=\int_{\partial\Omega} uv \mathbf b\cdot \mathbf n ds-(\nabla\cdot (v\mathbf b), u)=\int_{\partial\Omega} uv \mathbf b\cdot \mathbf n ds-(\mathbf b\cdot\nabla v, u)-(v\nabla \cdot \mathbf b, u), $$
we have
\begin{equation}
2(\mathbf b\cdot \nabla v,v)+2(c v, v)=\int_{\partial\Omega} v^2 \mathbf b\cdot \mathbf n ds+((2c-\nabla \cdot \mathbf b) v, v)\geq 0,\quad\forall v\in H^1_0(\Omega).
\label{ibp1}
\end{equation}
By the equivalence of two norms $|\cdot|_{1}$ and $\|\cdot\|_{1}$ for the space $H^1_0(\Omega)$ (see \cite{ciarlet1991basic}),  
we conclude that the bilinear form 
$A(u,v)=(\mathbf a \nabla u, \nabla v)+(\mathbf b\cdot \nabla u,v)+(c u, v)$
satisfies coercivity $A(v,v)\geq C \|v\|_1$ for any $v\in H^1_0(\Omega)$. 

The coercivity can also be achieved if we assume $| \mathbf b|< 4\lambda_{\mathbf a}c $.
By Young's inequality
$$|( \mathbf b\cdot \nabla v, v )| \leq \iint_{\Omega} \frac{|\mathbf b \cdot \nabla v|^2}{4c} + c |v|^2dxdy \leq 
\left ( \frac{|\mathbf b|^2}{4c} \nabla v, \nabla v\right ) + ( c v, v),$$
we have
\begin{equation}
A(v,v) \geq (\mathbf a\cdot \nabla v, \nabla v) +(c v, v) -|( \mathbf b\cdot \nabla v, v )| \geq \left( (\lambda_{\mathbf a}-\frac{|\mathbf b|^2}{4c}) \nabla v, \nabla v\right ) >0,\quad\forall v\in H^1_0(\Omega).
\label{ibp2}
\end{equation}

We need to make an additional assumption for \eqref{bilinearform}: 
the elliptic regularity holds for the dual problem.  
Let $A^*$ be the dual operator of $A$, i.e., $A^*(u,v)=A(v,u)$. We  assume the elliptic regularity $\|w\|_2\leq C\|f\|_0$ holds for the exact dual problem of finding $w\in H^1_0(\Omega)$ satisfying
$A^*(w,v)=(f,v),\quad \forall v\in H_0^1(\Omega)$.
 See \cite{savare1998regularity, grisvard2011elliptic} for the elliptic regularity with Lipschitz continuous coefficients on a Lipschitz domain.

\section{Quadrature error estimates}
\label{sec-consistency}
In the following, we will use  $\hat{\quad}$  for a function to  emphasize the function is defined on or  transformed to the reference cell  $\hat K =[-1,1]\times [-1,1]$ from a mesh cell.
\subsection{Standard estimates}
The Bramble-Hilbert Lemma for $Q^k$ polynomials can be stated as follows,  see Exercise 3.1.1 and Theorem 4.1.3 in \cite{ciarlet1978finite}:
\begin{theorem}
\label{bh-lemma}
If a continuous linear mapping  $\hat\Pi: H^{k+1}(\hat K)\rightarrow H^{k+1}(\hat K)$
satisfies $\hat\Pi \hat v=\hat v$ for any $\hat v\in Q^k(\hat K)$, then 
\begin{equation}
\|\hat u-\hat \Pi\hat u\|_{k+1,\hat K}\leq C [\hat u]_{k+1, \hat K}, \quad \forall\hat u\in H^{k+1}(\hat K).
\label{bh1}
\end{equation}
Thus if $l(\cdot)$ is a continuous linear form on the space $H^{k+1}(\hat K)$ satisfying
$l(\hat v)=0,\forall \hat v\in Q^k(\hat K),$
then \[|l(\hat u)|\leq C \|l\|'_{k+1, \hat K} [\hat u]_{k+1,\hat K},\quad \forall\hat u\in H^{k+1}(\hat K),\]
where $ \|l\|'_{k+1, \hat K}$ is the norm in the dual space of $H^{k+1}(\hat K)$.
\end{theorem}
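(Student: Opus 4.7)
The plan is to derive both assertions from a single Deny--Lions style seminorm estimate
\[
\inf_{\hat p\in Q^k(\hat K)}\|\hat u+\hat p\|_{k+1,\hat K}\leq C\,[\hat u]_{k+1,\hat K},\qquad \forall\,\hat u\in H^{k+1}(\hat K),
\]
with $C$ depending only on $\hat K$ and $k$; call this bound $(\ast)$. Granting $(\ast)$, the first assertion follows because $I-\hat\Pi$ is a continuous endomorphism of $H^{k+1}(\hat K)$ vanishing on $Q^k(\hat K)$, so $(I-\hat\Pi)\hat u=(I-\hat\Pi)(\hat u+\hat p)$ for every $\hat p\in Q^k(\hat K)$, and hence $\|\hat u-\hat\Pi\hat u\|_{k+1,\hat K}\leq \|I-\hat\Pi\|\cdot \inf_{\hat p}\|\hat u+\hat p\|_{k+1,\hat K}\leq C[\hat u]_{k+1,\hat K}$. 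The dual statement is obtained analogously by writing $l(\hat u)=l(\hat u+\hat p)$ for any $\hat p\in Q^k(\hat K)$ and taking the infimum.

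To prove $(\ast)$, I would first identify the kernel of the seminorm $[\cdot]_{k+1,\hat K}$: if $\partial_s^{k+1}\hat u=0$ and $\partial_t^{k+1}\hat u=0$ distributionally on $\hat K$, then $\hat u(\cdot,t)$ is a polynomial of degree at most $k$ in $s$ for a.e.\ $t$ and likewise in $t$ for a.e.\ $s$, so $\hat u\in Q^k(\hat K)$. I would then fix any continuous projection $\hat\Pi_0:H^{k+1}(\hat K)\to Q^k(\hat K)$ that preserves $Q^k$ (for instance, $Q^k$ Lagrange interpolation at the $(k+1)\times(k+1)$ Gauss--Lobatto nodes, which is continuous from $H^{k+1}$ to $Q^k$ for $k\geq 2$ by the Sobolev embedding already recorded in the preliminaries). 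The infimum on the left of $(\ast)$ is then majorised by $\|\hat u-\hat\Pi_0\hat u\|_{k+1,\hat K}$, so it suffices to control this latter quantity by $[\hat u]_{k+1,\hat K}$.

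The main obstacle, and the only non-routine step, is this last estimate, which I would establish by the Peetre--Tartar compactness argument on the quotient $H^{k+1}(\hat K)/Q^k(\hat K)$. Suppose for contradiction that there exists a sequence $\{\hat u_n\}\subset H^{k+1}(\hat K)$ with $\|\hat u_n-\hat\Pi_0\hat u_n\|_{k+1,\hat K}=1$ and $[\hat u_n]_{k+1,\hat K}\to 0$. After replacing $\hat u_n$ by $\hat u_n-\hat\Pi_0\hat u_n$, one may assume $\hat\Pi_0\hat u_n=0$ and $\{\hat u_n\}$ remains bounded in $H^{k+1}(\hat K)$. By Rellich's theorem a subsequence converges in $H^k(\hat K)$, and combined with $[\hat u_n]_{k+1,\hat K}\to 0$ the sequence is Cauchy in $H^{k+1}(\hat K)$ with limit $\hat u_*$ satisfying $[\hat u_*]_{k+1,\hat K}=0$ and $\hat\Pi_0\hat u_*=0$. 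The first forces $\hat u_*\in Q^k(\hat K)$ by the kernel computation above, and the second then forces $\hat u_*=0$, contradicting $\|\hat u_*\|_{k+1,\hat K}=1$. This is exactly Ciarlet's classical reasoning for Theorems~3.1.1 and~4.1.3; the only substantive modification is the replacement of the isotropic seminorm $|\cdot|_{k+1}$ (whose kernel is $P^k$) by the anisotropic seminorm $[\cdot]_{k+1}$ (whose kernel is the larger space $Q^k$), which is what makes the estimate sharp for tensor-product elements.
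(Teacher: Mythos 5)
The paper offers no proof of this theorem; it cites Exercise 3.1.1 and Theorem 4.1.3 of Ciarlet, so your attempt can only be judged against the standard argument those references encode. Your overall architecture is the right one: reducing both assertions to the quotient estimate $(\ast)$, identifying the kernel of $[\cdot]_{k+1,\hat K}$ as $Q^k(\hat K)$, and majorising the infimum by $\|\hat u-\hat\Pi_0\hat u\|_{k+1,\hat K}$ for a fixed continuous projection are all correct and cleanly executed.

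There is, however, a genuine gap at the one step you yourself flag as the only non-routine one. In the compactness argument you assert that convergence of a subsequence in $H^k(\hat K)$ together with $[\hat u_n]_{k+1,\hat K}\to 0$ makes the sequence Cauchy in $H^{k+1}(\hat K)$. This does not follow: $\|\hat u_n-\hat u_m\|_{k+1,\hat K}^2$ contains the \emph{mixed} derivatives $\partial_s^i\partial_t^j(\hat u_n-\hat u_m)$ with $i+j=k+1$ and $1\leq i,j\leq k$, and the seminorm $[\cdot]_{k+1,\hat K}$ sees only the two pure derivatives $\partial_s^{k+1}$ and $\partial_t^{k+1}$. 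From boundedness in $H^{k+1}$ you get at best weak convergence of the mixed derivatives, so the weak limit $\hat u_*$ could satisfy $\|\hat u_*\|_{k+1,\hat K}<1$ and no contradiction is reached. Closing the gap requires the inequality
\begin{equation*}
\|v\|_{k+1,\hat K}\leq C\left(\|v\|_{k,\hat K}+\|\partial_s^{k+1}v\|_{0,\hat K}+\|\partial_t^{k+1}v\|_{0,\hat K}\right),
\end{equation*}
i.e.\ control of all mixed top-order derivatives by the pure ones plus lower-order terms. This is a classical but nontrivial fact for domains with the cone property (it is essentially the content of the hint to Ciarlet's Exercise 3.1.1, and is the precise point at which the $Q^k$ statement with kernel $Q^k$ departs from the $P^k$ Deny--Lions lemma with the full seminorm $|\cdot|_{k+1}$, where the analogous step is an identity). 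With that inequality inserted, your Peetre--Tartar argument goes through verbatim; without it, the proof is incomplete at exactly the step that makes the theorem worth stating.
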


By applying Bramble-Hilbert Lemma, we have the following standard quadrature estimates. See \cite{li2019superconvergence}  for the detailed proof. 
\begin{theorem}
For  a sufficiently smooth function $a(x,y)$, let $m$ is an integer satisfying $k\leq  m\leq 2k$, 
we have 
\label{quaderror-theorem}
\[  \iint_e a(x,y)dxdy-  \iint_e a_I(x,y)dxdy=\mathcal O(h^{m+1})[a]_{m,e}=\mathcal O(h^{m+2})[a]_{m,\infty,e}.\]

\end{theorem}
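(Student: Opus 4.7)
The plan is to reduce the claim to the Bramble-Hilbert Lemma applied on the reference cell $\hat K$. First I would rescale using $\hat a(s,t)=a(x_e+sh,y_e+th)$, giving
\[
\iint_e a\,dxdy-\iint_e a_I\,dxdy=h^2\Bigl(\iint_{\hat K}\hat a\,dsdt-\iint_{\hat K}\hat a_I\,dsdt\Bigr).
\]
Since $\hat a_I\in Q^k(\hat K)\subset Q^{2k-1}(\hat K)$, the tensor product $(k+1)\times(k+1)$ Gauss-Lobatto rule integrates $\hat a_I$ exactly, and combined with $\hat a_I=\hat a$ at every quadrature node the parenthesis collapses to the standard quadrature-error functional
\[
l(\hat a):=\iint_{\hat K}\hat a\,dsdt-\iint_{\hat K}\hat a\,d^hsd^ht.
\]
Continuity of $l$ on $H^m(\hat K)$ is routine, since $m\geq k\geq 2$: the integral is bounded by $\|\hat a\|_{0,1,\hat K}$ and the quadrature by $\|\hat a\|_{0,\infty,\hat K}$, both controlled by $\|\hat a\|_{m,\hat K}$ via Cauchy-Schwarz and the Sobolev embedding $H^m(\hat K)\hookrightarrow C^0(\hat K)$.

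The key observation is that $l$ annihilates $Q^{m-1}(\hat K)$ throughout the range $k\leq m\leq 2k$. Indeed, the tensor product $(k+1)$-point Gauss-Lobatto rule is exact on $Q^{2k-1}(\hat K)$, and $Q^{m-1}\subseteq Q^{2k-1}$ exactly when $m-1\leq 2k-1$. Applying Theorem \ref{bh-lemma} with its parameter replaced by $m-1$ then delivers
\[
|l(\hat a)|\leq C[\hat a]_{m,\hat K}.
\]
Unwinding the scaling via $[\hat a]_{m,2,\hat K}=h^{m-1}[a]_{m,2,e}$ and accounting for the $h^2$ Jacobian factor produces the first bound $\mathcal O(h^{m+1})[a]_{m,e}$. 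The sup-norm form follows immediately from $[a]_{m,2,e}\leq |e|^{1/2}[a]_{m,\infty,e}=\mathcal O(h)[a]_{m,\infty,e}$.

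I do not anticipate a real obstacle here: the argument is a straightforward rescale-plus-Bramble-Hilbert, and the heavy lifting is already packaged into Theorem \ref{bh-lemma}. The only subtlety worth flagging is identifying the annihilated polynomial space as $Q^{m-1}$ rather than $P^{m-1}$, which relies on tensor-product Gauss-Lobatto exactness on $Q^{2k-1}$ (degree $2k-1$ in each variable separately) rather than the more familiar total-degree exactness.
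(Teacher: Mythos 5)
Your argument is correct and is essentially the proof the paper intends: the paper itself only cites \cite{li2019superconvergence} but explicitly attributes the result to an application of the Bramble--Hilbert Lemma, which is exactly your rescale-to-$\hat K$, identify-the-quadrature-error-functional, annihilate-$Q^{m-1}\subseteq Q^{2k-1}$ argument. Your scaling bookkeeping ($h^2$ Jacobian times $[\hat a]_{m,\hat K}=h^{m-1}[a]_{m,e}$, then $[a]_{m,e}=\mathcal O(h)[a]_{m,\infty,e}$) and the flagged subtlety about $Q^{m-1}$ versus $P^{m-1}$ are both right.
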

\begin{theorem}
\label{rhs-estimate}
 If $f\in H^{k+2}(\Omega)$, $(f,v_h)-\langle f,v_h\rangle_h =\mathcal O(h^{k+2}) \|f\|_{k+2} \|v_h\|_2,\quad\forall v_h\in V^h.$
\end{theorem}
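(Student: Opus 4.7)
My plan is to reduce the quadrature error to a cell-by-cell analysis and, on each cell $e$, split $v_h|_e$ into three $L^2$-orthogonal polynomial pieces of ascending degree, then bound each piece's contribution with a Bramble--Hilbert estimate tailored to the polynomial space of $f$ on which the corresponding quadrature-error linear form vanishes. Concretely, let $\Pi_0^e,\Pi_1^e$ denote the cell-wise $L^2$-projections onto $Q^0(e)$ and $Q^1(e)$, write $v_h|_e = w_0 + w_1 + w_2$ with $w_0 := \Pi_0^e v_h$, $w_1 := (\Pi_1^e-\Pi_0^e)v_h$, $w_2 := v_h-\Pi_1^e v_h$, and set $L_e(f,w):=(f,w)_e - \langle f,w\rangle_{e,h}$. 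The total error is $\sum_e\sum_{j=0}^{2} L_e(f,w_j)$.

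The key observation is that, since the $(k+1)\times(k+1)$ Gauss--Lobatto rule integrates $Q^{2k-1}$ exactly, the linear form $f\mapsto L_e(f,w_j)$ vanishes on every polynomial $f$ for which $f\,w_j\in Q^{2k-1}(e)$. Consequently it vanishes on $Q^{k+1}(e)\subset Q^{2k-1}(e)$ when $w_0$ is constant, on $Q^k(e)$ when $w_1\in Q^1(e)$, and on $Q^{k-1}(e)$ when $w_2\in Q^k(e)$ (the first two requiring the paper's standing assumption $k\geq 2$). Applying Theorem~\ref{bh-lemma} on the reference cell $\hat K$, with the continuity bound $|\hat L_e(\hat f,\hat w)|\leq C\|\hat f\|_{H^m(\hat K)}\|\hat w\|_{L^\infty(\hat K)}$ from the two-dimensional Sobolev embedding $H^m\hookrightarrow L^\infty$, and the scaling $[\hat f]_{m,\hat K}=h^{m-1}[f]_{m,e}$, I obtain the per-cell bounds
\begin{align*}
|L_e(f,w_0)|&\leq Ch^{k+3}\,|w_0|\,[f]_{k+2,e},\\
|L_e(f,w_1)|&\leq Ch^{k+2}\,\|w_1\|_{L^\infty,e}\,[f]_{k+1,e},\\
|L_e(f,w_2)|&\leq Ch^{k+1}\,\|w_2\|_{L^\infty,e}\,[f]_{k,e}.
\end{align*}
The first is essentially Theorem~\ref{quaderror-theorem} with $m=k+2$; the lower $h$-powers in the other two will be compensated by smallness of $\|w_j\|_{L^\infty,e}$ coming from the orthogonality of $w_j$ to low-degree polynomials.

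To produce the compensating factors, I would apply the operator form of Bramble--Hilbert to $\hat\Pi_1-\hat\Pi_0$ (which annihilates constants) to get $\|\hat w_1\|_{L^2(\hat K)}\leq C|\hat v_h|_{1,\hat K}$, then combine with norm equivalence on the finite-dimensional space $Q^1(\hat K)$ and scale to yield $\|w_1\|_{L^\infty,e}\leq C|v_h|_{1,e}$; similarly, Bramble--Hilbert on $I-\hat\Pi_1$ plus norm equivalence on $Q^k(\hat K)$ gives $\|w_2\|_{L^\infty,e}\leq Ch[v_h]_{2,e}$. Jensen's inequality gives $\left(\sum_e w_0^2\right)^{1/2}\leq Ch^{-1}\|v_h\|_0$. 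Summing the three per-cell bounds via Cauchy--Schwarz, the contributions are bounded by $Ch^{k+2}\|f\|_{k+2}\|v_h\|_0$, $Ch^{k+2}\|f\|_{k+1}\|v_h\|_1$, and $Ch^{k+2}\|f\|_k\|v_h\|_2$ respectively, each of which is dominated by $Ch^{k+2}\|f\|_{k+2}\|v_h\|_2$, proving the claim.

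The step I expect to be most delicate is identifying the enlarged vanishing polynomial spaces for $L_e(\cdot,w_j)$ and, in the Bramble--Hilbert application, pairing the correct Sobolev continuity index with each polynomial degree so that, after affine scaling, the sharp $h$-powers $h^{k+3}$ and $h^{k+2}$ (rather than the naive $h^{k+1}$) actually emerge for the lower-degree pieces. This step crucially exploits $k\geq 2$, which ensures $Q^{k+1}\subset Q^{2k-1}$ and hence Gauss--Lobatto exactness on the enlarged spaces used for $w_0$ and $w_1$.
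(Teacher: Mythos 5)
Your argument is correct, and it is essentially the argument the paper relies on: the paper does not reproduce the proof but defers it to \cite{li2019superconvergence}, where the same strategy is used — reduce to the reference cell, split $v_h$ into its cell average, the remaining $Q^1$-projection part, and the $Q^1$-orthogonal remainder, and apply the Bramble--Hilbert Lemma to the quadrature-error functional in $f$ with the vanishing polynomial space enlarged according to the degree of each piece (using $k\ge 2$ for Gauss--Lobatto exactness on $Q^{2k-1}$). Your per-cell exponents, the compensating bounds $\|w_1\|_{L^\infty,e}\le C|v_h|_{1,e}$, $\|w_2\|_{L^\infty,e}\le Ch[v_h]_{2,e}$, $|w_0|\le Ch^{-1}\|v_h\|_{0,e}$, and the final Cauchy--Schwarz summation all check out.
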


\begin{remark}
By the theorems above, on the reference cell $\hat K$,  we have 
\begin{equation}\label{quaderror-theorem-ref}
\iint_{\hat K} \hat a(s,t)-\hat a_I(s,t)dsdt\leq C [\hat a]_{k+2,\hat K}\leq C[\hat a]_{k+2,\infty,\hat K},
\end{equation}
and 
\begin{equation}\label{remark-interp-error}
\|\hat a-\hat a_I\|_{k+1,\hat K}\leq C [\hat a]_{k+1,\hat K}.
\end{equation}
\end{remark}

The following two results are also standard estimates obtained by applying the Bramble-Hilbert Lemma. 
\begin{lemma}
\label{lemma-quaderror-2norm}
 If $f\in H^{2}(\Omega)$ or $f\in V^h$, we have $(f,v_h)-\langle f,v_h\rangle_h =\mathcal O(h^2) |f|_{2} \|v_h\|_0,\quad\forall v_h\in V^h.$
\end{lemma}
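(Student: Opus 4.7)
The plan is to apply the Bramble-Hilbert Lemma on the reference cell $\hat K$ with $Q^1$ as the polynomial space over which the quadrature error vanishes. For a generic cell $e$, the standard scaling gives
\[
(f,v_h)_e-\langle f,v_h\rangle_{e,h} \;=\; h^2\bigl[(\hat f,\hat v_h)_{\hat K}-\langle \hat f,\hat v_h\rangle_{\hat K}\bigr],
\]
so it suffices to bound the bracket in terms of $[\hat f]_{2,\hat K}$ and $\|\hat v_h\|_{0,\hat K}$, and then rescale and sum over cells.

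Fixing $\hat v_h\in Q^k(\hat K)$, I would view $l(\hat f):=(\hat f,\hat v_h)_{\hat K}-\langle \hat f,\hat v_h\rangle_{\hat K}$ as a linear functional in $\hat f$, continuous on $H^2(\hat K)$ via the embedding $H^2(\hat K)\hookrightarrow C^0(\hat K)$ (the $L^2$ piece being trivial, the quadrature piece needing pointwise values). The crucial observation is that when $\hat f\in Q^1(\hat K)$, the product $\hat f\hat v_h\in Q^{k+1}(\hat K)\subseteq Q^{2k-1}(\hat K)$ for $k\ge 2$, so the tensor-product $(k+1)$-point Gauss–Lobatto rule is exact and $l(\hat f)=0$. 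Theorem \ref{bh-lemma} then yields
\[
|l(\hat f)|\le C\,\|l\|'_{2,\hat K}\,[\hat f]_{2,\hat K}.
\]

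To pin down the norm of $l$, I would bound $|(\hat f,\hat v_h)_{\hat K}|\le \|\hat f\|_{0,\hat K}\|\hat v_h\|_{0,\hat K}$ and estimate the quadrature term by $|\langle \hat f,\hat v_h\rangle_{\hat K}|\le C\|\hat f\|_{0,\infty,\hat K}\|\hat v_h\|_{0,\infty,\hat K}\le C\|\hat f\|_{2,\hat K}\|\hat v_h\|_{0,\hat K}$, using Sobolev embedding on $\hat f$ and norm equivalence on the finite-dimensional space $Q^k(\hat K)$ for $\hat v_h$. This gives $\|l\|'_{2,\hat K}\le C\|\hat v_h\|_{0,\hat K}$ and hence
\[
|(\hat f,\hat v_h)_{\hat K}-\langle \hat f,\hat v_h\rangle_{\hat K}|\le C\,[\hat f]_{2,\hat K}\,\|\hat v_h\|_{0,\hat K}.
\]

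Finally, scaling back via $[\hat f]_{2,\hat K}=h\,[f]_{2,e}$ and $\|\hat v_h\|_{0,\hat K}=h^{-1}\|v_h\|_{0,e}$ turns each cellwise contribution into $\mathcal O(h^2)[f]_{2,e}\|v_h\|_{0,e}$. Summing over $e$ and applying the cellwise Cauchy–Schwarz inequality delivers the claimed $\mathcal O(h^2)|f|_{2}\|v_h\|_{0}$ bound (after noting $[f]_{2,\Omega}\le C|f|_{2,\Omega}$). The case $f\in V^h$ proceeds identically on each cell, with $|f|_2$ interpreted as the broken seminorm $(\sum_e|f|_{2,e}^2)^{1/2}$, since $\hat f$ is smooth on $\hat K$ even though $f$ need not be globally $H^2$. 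I do not expect a real obstacle: the argument follows the same Bramble–Hilbert template as Theorems \ref{quaderror-theorem} and \ref{rhs-estimate}; the only point that needs care is choosing to preserve $Q^1$ rather than $Q^k$ (so that the bound is in terms of $[\hat f]_{2,\hat K}$ rather than $[\hat f]_{k+1,\hat K}$), together with confirming that the quadrature remains exact on the resulting product space, which is exactly what forces the standing hypothesis $k\ge 2$.
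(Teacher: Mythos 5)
Your proposal is correct and follows essentially the same route as the paper: freeze $\hat v_h$, regard the cellwise quadrature error as a continuous linear functional of $\hat f$ on $H^2(\hat K)$ via the Sobolev embedding, note it vanishes for $\hat f\in Q^1(\hat K)$ because $\hat f\hat v_h\in Q^{k+1}\subseteq Q^{2k-1}$ is integrated exactly by the $(k+1)$-point Gauss--Lobatto rule when $k\ge 2$, apply the Bramble--Hilbert Lemma, and rescale and sum with Cauchy--Schwarz. The scalings $[\hat f]_{2,\hat K}=h[f]_{2,e}$ and $\|\hat v_h\|_{0,\hat K}=h^{-1}\|v_h\|_{0,e}$ and the treatment of the $f\in V^h$ case all match the paper's argument.
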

\begin{proof}
For simplicity, we ignore the subscript in $v_h$.
Let $E(f)$ denote the quadrature error for integrating $f(x,y)$ on $e$.
Let $\hat E(\hat f)$ denote the quadrature error for integrating $\hat f(s,t)=f(x_e+sh,y_e+th)$ on the 
reference cell $\hat K$.
 Due to the embedding $H^{2}(\hat K)\hookrightarrow C^0(\hat K)$, we have
 \begin{align*}
|\hat E(\hat f \hat v)|\leq C |\hat f \hat v|_{0,\infty,\hat K}\leq C |\hat f |_{0,\infty,\hat K}|\hat v|_{0,\infty,\hat K}\leq C\|\hat f\|_{2,\hat K} \|\hat v\|_{0,\hat K}.
 \end{align*}
 Thus the mapping $\hat f\rightarrow E(\hat f \hat v)$ is a continuous linear form on $H^2(\hat K)$ and its norm is bounded by $C\|\hat v\|_{0,\hat K}$.
If 
$\hat f \in Q^1(\hat K)$, then we have $\hat E(\hat f \hat v)=0$.
By the Bramble-Hilbert Lemma Theorem \ref{bh-lemma} on this continuous linear form, we get  
$$|\hat E(\hat f \hat v)| \leq C [\hat f]_{2,\hat K} \|\hat v\|_{0,\hat K} .$$ 
So on a cell $e$, we get
\begin{equation}\label{consisterr-cell}
E(fv)=h^2\hat E (\hat f\hat v)\leq Ch^2[\hat f]_{2,\hat K} \|\hat v\|_{0,\hat K}\leq C h^2|f|_{2, e} \| v\|_{0, e}.
\end{equation}
 Summing over all elements and use Cauchy-Schwarz inequality, we get the desired result. 
\end{proof}

\begin{theorem}\label{a-ah-1norm}
Assume all coefficients of \eqref{bilinearform} are in $W^{2,\infty}(\Omega)$. We have 
\[ A(z_h,v_h)-A_h(z_h,v_h)=\mathcal O(h) \|v_h\|_2 \|z_h\|_1, \quad\forall v_h,z_h\in V^h.\]
\end{theorem}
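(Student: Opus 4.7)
The plan is to split $A(z_h,v_h)-A_h(z_h,v_h) = \sum_e E_e$ cell by cell, and within each cell by the three types of terms in \eqref{bilinearform}: diffusion $a^{ij}\partial_i z_h\partial_j v_h$, convection $b^i\partial_i z_h v_h$, and reaction $cz_h v_h$. For the reaction term I would simply apply Lemma~\ref{lemma-quaderror-2norm} with $f=cz_h\in H^2(\Omega)$ to get $Ch^2|cz_h|_2\|v_h\|_0\le Ch^2\|z_h\|_2\|v_h\|_0$; then a cellwise inverse estimate $\|z_h\|_{2,e}\le Ch^{-1}\|z_h\|_{1,e}$ and $\|v_h\|_0\le\|v_h\|_2$ reduce this to $\mathcal{O}(h)\|z_h\|_1\|v_h\|_2$, matching the claim.

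For each diffusion or convection contribution I would first transform to $\hat K$: a diffusion term becomes $E_e = \hat E(\hat a\,\partial_{i'}\hat z_h\,\partial_{j'}\hat v_h)$ (the Jacobians cancel because of the two derivatives), while a convection term becomes $E_e = h\,\hat E(\hat b\,\partial_{i'}\hat z_h\,\hat v_h)$ (one unmatched derivative leaves a single factor of $h$). To each I would apply the Bramble–Hilbert Lemma (Theorem~\ref{bh-lemma} at level $k=1$) to the linear functional $\hat f\mapsto \hat E(\hat f\hat g)$, where $\hat g=\partial_{j'}\hat v_h$ (diffusion) or $\hat v_h$ (convection), and $\hat f=\hat a\,\partial_{i'}\hat z_h$ (resp.\ $\hat b\,\partial_{i'}\hat z_h$). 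This functional vanishes for $\hat f\in Q^1(\hat K)$ because $\hat f\hat g$ then has degree at most $k+1\le 2k-1$ in each variable, which the $(k+1)\times(k+1)$ Gauss–Lobatto rule integrates exactly; continuity on $H^2(\hat K)$ follows from $H^2\hookrightarrow C^0$ together with finite-dimensional norm equivalence on the polynomial space containing $\hat g$. Bramble–Hilbert then yields $|\hat E(\hat f\hat g)|\le C\,[\hat f]_{2,\hat K}\,\|\hat g\|_{0,\hat K}$.

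The crux is then to show that $[\hat f]_{2,\hat K}$ carries an extra factor of $h$. Since $\hat a(s,t)=a(x_e+sh,\,y_e+th)$, each derivative of $\hat a$ of order $i$ equals $h^i$ times the analogous derivative of $a$, so when $\partial_s^2(\hat a\,\partial_{i'}\hat z_h)$ and $\partial_t^2(\hat a\,\partial_{i'}\hat z_h)$ are expanded by the Leibniz rule, every term containing at least one derivative of $\hat a$ picks up at least one factor of $h$. The sole problematic term is the pure $\hat a\cdot\partial^2\partial_{i'}\hat z_h$ piece, which for $k=2$ either vanishes identically (one pure third derivative of a $Q^2$ polynomial is zero) or is controlled by $|\hat z_h|_{1,\hat K}$ via the fact that $\hat E$ is invariant under adding a constant to $\hat z_h$, combined with finite-dimensional seminorm equivalence on $Q^k(\hat K)/\mathbb{R}$. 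The resulting cellwise bound is $|E_e|\le Ch\,|\hat z_h|_{1,\hat K}\,|\hat v_h|_{1,\hat K}$ (with the explicit Jacobian $h$ already available in the convection case). Summing over cells with Cauchy–Schwarz and the scaling $|\hat u|_{1,\hat K}=|u|_{1,e}$ gives $\sum_e|E_e|\le Ch\,|z_h|_1\,|v_h|_1\le Ch\,\|z_h\|_1\|v_h\|_2$.

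The main obstacle I anticipate is the careful bookkeeping of $h$ factors in the Leibniz expansion of $[\hat f]_{2,\hat K}$: after extracting a factor of $h$ from some derivative of $\hat a$, I must make sure the remaining polynomial factor in $\hat z_h$ does not carry a hidden $h^{-1}$ that would cancel the gain. The polynomial structure of $V^h$, specifically the equivalence of $|\cdot|_m$ with $|\cdot|_1$ on $Q^k(\hat K)/\mathbb{R}$, is precisely what makes the $W^{2,\infty}$ hypothesis on the coefficients suffice to close the estimate; dropping that trick would force one to higher-level Bramble–Hilbert and correspondingly higher regularity of the coefficients.
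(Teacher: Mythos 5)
Your reaction-term estimate is fine, but the diffusion (and convection) part has a genuine gap: the claimed cellwise bound $|E_e|\le Ch\,|\hat z_h|_{1,\hat K}|\hat v_h|_{1,\hat K}$, hence the global bound $\mathcal O(h)|z_h|_1|v_h|_1$, is false. In the Leibniz expansion of $[\hat a\,\partial_s\hat z_h]_{2,\hat K}$ the ``pure'' term you must control is not only $\hat a\,\partial_s^3\hat z_h$ (which does vanish for $k=2$) but also $\hat a\,\partial_t^2\partial_s\hat z_h$ coming from the $t$-part of the seminorm, and for a $Q^2$ polynomial $\partial_s\partial_t^2\hat z_h$ does not vanish. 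Bounding it by $|\hat z_h|_{1,\hat K}$ through seminorm equivalence on $Q^k(\hat K)/\mathbb{R}$ is legitimate, but yields an $\mathcal O(1)$ constant with no factor of $h$ — this is exactly the hidden $h^{-1}$ you flagged as the danger: $\partial_s\partial_t^2\hat z_h=h^3\,\partial_x\partial_y^2 z_h$ while $\|\partial_x\partial_y^2 z_h\|_{0,e}$ can be as large as $Ch^{-2}|z_h|_{1,e}$. A concrete counterexample: take $\mathbf a=I$, $\mathbf b=0$, $c=0$, and $z_h=v_h=\mu(x)\nu(y)$ with $\mu$ a continuous piecewise-linear sawtooth of slope $\pm1$ on the $x$-cells and $\nu$ equal to $1-t^2$ on each reference $y$-interval. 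Then $\|v_h\|_1=\mathcal O(1)$ and $\|v_h\|_0=\mathcal O(h)$, yet
\begin{equation*}
A(v_h,v_h)-A_h(v_h,v_h)=E\bigl((\mu'\nu)^2\bigr)=\int_0^1\nu^2\,dy-\langle \nu^2\rangle_h=\tfrac{8}{15}-\tfrac23=-\tfrac{2}{15},
\end{equation*}
an $\mathcal O(1)$ quantity (the $(v_{h,y},v_{h,y})$ contribution is integrated exactly). So no bound of the form $Ch\|z_h\|_1\|v_h\|_1$ can hold; the theorem is consistent only because $\|v_h\|_2=\mathcal O(h^{-1})$ here. This is precisely the obstruction recorded in Remark \ref{rmk-consistency}: for tensor-product elements the mixed derivative $(\hat v_h)_{stt}$ survives, so the classical simplicial-element argument you are adapting (where $\partial\hat z_h\in P^{k-1}$ and the top Leibniz term dies) does not close. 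The same accounting problem also undoes the convection case: the explicit Jacobian factor $h$ is cancelled by $\|\hat v_h\|_{0,\hat K}=h^{-1}\|v_h\|_{0,e}$ when you return to element quantities.

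The theorem is nonetheless easy to prove once you accept $\|v_h\|_2$ rather than $\|v_h\|_1$ on the right-hand side — which is the whole point of its statement. The paper applies the $\mathcal O(h^2)$ estimate of Lemma \ref{lemma-quaderror-2norm} with the roles arranged so that the $H^2$-seminorm falls on $f=a^{11}\partial_x v_h$ and the $L^2$-norm on $\partial_x z_h$, giving $Ch^2\|a^{11}\|_{2,\infty}\|v_h\|_3\,|z_h|_1$, and then spends a single inverse estimate $\|v_h\|_3\le Ch^{-1}\|v_h\|_2$. No gain from differentiating the coefficient is needed, and the unavoidable one-order loss is parked on $v_h$, where the later duality argument can absorb it.
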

\begin{proof}
By setting $f=a^{11} (v_h)_x$ in \eqref{consisterr-cell}, we get 
\begin{align*}
& |(a^{11} (z_h)_x, (v_h)_x)-\langle a^{11} (z_h)_x, (v_h)_x\rangle_h|\leq C h^2 \|a^{11} (v_h)_x\|_2 \|(z_h)_x\|_0\\
 \leq & C h^2 \|a^{11}\|_{2,\infty} \|v_h\|_3 |z_h|_1 \leq  C h \|a^{11}\|_{2,\infty} \|v_h\|_2 |z_h|_1,
\end{align*}
where the inverse estimate \eqref{inverseestimate} is used in the last inequality.
Similarly, we have
\begin{align*}
(a^{12} (z_h)_x, (v_h)_y)-\langle a^{12} (z_h)_x, (v_h)_y\rangle_h&= C h \|a^{12}\|_{2,\infty}\|v_h\|_2 |z_h|_1,\\
(a^{22} (z_h)_y, (v_h)_y)-\langle a^{22} (z_h)_y, (v_h)_y\rangle_h&= C h \|a^{22}\|_{2,\infty}\|v_h\|_2 |z_h|_1,\\
(b^1 (z_h)_x, v_h)-\langle b^1 (z_h)_x, v_h\rangle_h&= C h \|b^1\|_{2,\infty} \|v_h\|_2 |z_h|_0,\\
(b^2 (z_h)_y, v_h)-\langle b^2 (z_h)_y, v_h\rangle_h&= C h \|b^2\|_{2,\infty} \|v_h\|_2 |z_h|_0,\\
(c z_h, v_h)-\langle c z_h, v_h\rangle_h&= C h \|c\|_{2,\infty} \|v_h\|_1 |z_h|_0,
\end{align*}
which implies
\[A(z_h,v_h)-A_h(z_h,v_h)=\mathcal O(h) \|v_h\|_2 \|z_h\|_1.\]
\end{proof}

 \subsection{A refined consistency error}
 \label{sec-refinedconsistency}
 In this subsection, we will show how to establish the desired consistency error 
 estimate for smooth enough coefficients: 
 \[
 A(u, v_h)-A_h(u,v_h)=\begin{cases}
                         \mathcal O(h^{k+2})\|u\|_{k+3}\|v_h\|_2,\quad \forall v_h\in V^h_0\\
                         \mathcal O(h^{k+\frac{3}{2}})\|u\|_{k+3}\|v_h\|_2,\quad \forall v_h\in V^h
                        \end{cases}.
\]
 
\begin{theorem}
\label{a-ah}
Assume $a(x,y)\in W^{k+2,\infty}(\Omega)$, $u\in H^{k+3}(\Omega)$, then
\begin{subnumcases}
{(a \partial_x u,\partial_x v_h)-\langle a\partial_x u, \partial_x v_h\rangle_h=}
\mathcal O(h^{k+2})\|a\|_{k+2,\infty}\|u\|_{k+3}\|v_h\|_2,\quad \forall v_h\in V^h_0,\label{a-ah-auxvxv0}\\
\mathcal O(h^{k+\frac32})\|a\|_{k+2,\infty}\|u\|_{k+3}\|v_h\|_2,\quad \forall v_h\in V^h,\label{a-ah-auxvxv}
\end{subnumcases}
\begin{subnumcases}
{(a\partial_x u,\partial_y v_h)-\langle a\partial_x u, \partial_y v_h\rangle_h=}
\mathcal O(h^{k+2})\|a\|_{k+2,\infty}\|u\|_{k+3}\|v_h\|_2,\quad \forall v_h\in V^h_0,\label{a-ah-auxvyv0}\\
\mathcal O(h^{k+\frac32})\|a\|_{k+2,\infty}\|u\|_{k+3}\|v_h\|_2,\quad \forall v_h\in V^h,\label{a-ah-auxvyv}
\end{subnumcases}
\begin{equation}
(a\partial_x u,v_h)-\langle a\partial_x u, v_h\rangle_h=\mathcal O(h^{k+2})\|a\|_{k+2,\infty}\|u\|_{k+3}\|v_h\|_2,\quad \forall v_h\in V^h_0,\label{a-ah-buxv}
\end{equation}
\begin{equation}
(au,v_h)-\langle au, v_h\rangle_h=\mathcal O(h^{k+2})\|a\|_{k+2,\infty}\|u\|_{k+2}\|v_h\|_2,\quad \forall v_h\in V^h_0.\label{a-ah-cuv}
\end{equation}
\end{theorem}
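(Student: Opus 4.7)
The plan is to turn each cell-level quadrature error into a form where, after summing over the mesh, interior-edge contributions cancel and only edges on $\partial\Omega$ survive. On each cell $e$ I split $a\partial_x u = (a\partial_x u)_I + \bigl(a\partial_x u - (a\partial_x u)_I\bigr)$. For the remainder, the interpolation bound \eqref{remark-interp-error} applied on the reference cell, combined with Lemma \ref{lemma-quaderror-2norm} and, where needed, the inverse estimate \eqref{inverseestimate} to swap a derivative of $v_h$ for an $h^{-1}$ factor, contributes at most $\mathcal{O}(h^{k+2})\|a\|_{k+2,\infty}\|u\|_{k+3}\|v_h\|_2$ after a cell-wise Cauchy--Schwarz. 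This part is routine; the delicate piece is the interpolant part.

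Consider \eqref{a-ah-auxvxv0} and \eqref{a-ah-auxvxv}: the interpolant contribution is $\langle (a\partial_x u)_I,\partial_x v_h\rangle_h - ((a\partial_x u)_I,\partial_x v_h)$. On the reference cell, $\widehat{(a\partial_x u)_I}$ has degree at most $k$ in $s$ while $\widehat{\partial_x v_h}$ has degree at most $k-1$ in $s$, so their product has degree at most $2k-1$ in $s$ and the $(k+1)$-point Gauss--Lobatto rule in $s$ is exact. The quadrature error is therefore purely one-dimensional in $t$, i.e., equal to $\iint_{\hat K}\widehat{(a\partial_x u)_I}\,\widehat{\partial_x v_h}\,d^hs\,dt - \iint_{\hat K}\widehat{(a\partial_x u)_I}\,\widehat{\partial_x v_h}\,d^hs\,d^ht$. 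Following the strategy advertised in the introduction, I would then integrate by parts in $t$ on the interpolation polynomial to rewrite this single-variable quadrature error as a bulk term that vanishes again by exactness plus boundary terms at $t=\pm1$. Summed over the mesh, the boundary pieces on each interior horizontal edge appear with opposite signs in the two adjacent cells and cancel exactly, leaving only the top and bottom of $\Omega$. For $v_h\in V_0^h$ these vanish since $v_h|_{\partial\Omega}=0$, and Theorem \ref{bh-lemma} applied to a 1D continuous linear form that annihilates $Q^k$ polynomials yields \eqref{a-ah-auxvxv0}. For $v_h\in V^h$ only the $\partial\Omega$ sum remains; the trace estimate $\|v_h\|_{s,\partial e}\leq C h^{-1/2}\|v_h\|_{s,e}$ costs a factor $h^{-1/2}$ and produces the $\mathcal{O}(h^{k+3/2})$ bound \eqref{a-ah-auxvxv}. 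For the mixed pair \eqref{a-ah-auxvyv0}--\eqref{a-ah-auxvyv} the roles of $s$ and $t$ swap, so one integrates by parts in $s$ and summation over vertical edges provides the analogous cancellation.

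For the lower-order identities \eqref{a-ah-buxv} and \eqref{a-ah-cuv} the integrand contains $v_h$ itself rather than a derivative of $v_h$. No inverse estimate is needed and now exactness holds in whichever direction is not being integrated by parts, so the cancellation closes without any trace loss and gives the full $\mathcal{O}(h^{k+2})$ rate; the hypothesis $v_h\in V_0^h$ is used only to kill the surviving boundary integrals. The coefficient regularity $W^{k+2,\infty}$ enters through the $\|a\|_{k+2,\infty}$ factor coming from applying Theorem \ref{bh-lemma} to the linear form in $\hat a$ obtained after the IBP step.

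The main obstacle will be the bookkeeping in the integration-by-parts step: after transferring the derivative, I must verify that the surviving 1D residual linear form on the top and bottom faces of $\hat K$ annihilates the full $Q^k$ trace space, so that Theorem \ref{bh-lemma} delivers the sharp $\mathcal{O}(h^{k+2})$ rate rather than only $\mathcal{O}(h^{k+1})$. Equivalently, the interpolant of $a\partial_x u$ and the direction of the integration by parts must be chosen carefully enough that the edge quadrature error kills polynomials of degree $k$; doing this consistently for each of the five integrands, and tracking that the smoothness demands on $a$ and $u$ add up to exactly the stated $W^{k+2,\infty}$ and $H^{k+3}$ regularity, is where the real technical effort lies.
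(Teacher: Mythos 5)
Your overall architecture --- exactness of the Gauss--Lobatto rule in one variable, integration by parts to expose inter-cell cancellation, surviving boundary terms only on $\partial\Omega$, and a half-order loss via a trace inequality for general $v_h\in V^h$ --- is indeed the paper's strategy. But three concrete steps in your route fail. First, the direction of the integration by parts is reversed. For $(a\partial_xu,\partial_xv_h)$ the product $(\hat a\hat u_s)_I\hat v_s$ has degree $2k-1$ in $s$, so the $s$-quadrature is exact and the residual error is a $t$-quadrature error; the paper then integrates by parts in $s$ (the variable carrying the derivative of $\hat v$), which can be applied identically to both the $ds\,dt$ and the $ds\,d^ht$ integrals and produces boundary terms $\int(\hat a\hat u_s)_I\hat v\,dt$ at $s=\pm1$, i.e.\ on \emph{vertical} edges, where $(au_x)_Iv$ is single-valued so the interior contributions cancel. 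Integrating by parts in $t$, as you propose, cannot be applied to the $d^ht$ term (a finite sum, not an integral), and would require a $t$-antiderivative of $(\hat a\hat u_s)_I$ defined cell by cell; the resulting edge terms at $t=\pm1$ from two vertically adjacent cells do not cancel (one of the two antiderivatives vanishes on the shared edge, the other does not). The same reversal appears in your treatment of the mixed term, where the correct direction is integration by parts in $t$ with cancellation across horizontal edges.

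Second, the post-IBP bulk term does not ``vanish by exactness'': $[(\hat a\hat u_s)_I]_s\hat v$ has degree $2k$ in $t$, beyond the exactness degree $2k-1$ of the $(k+1)$-point rule. Estimating this term at order $h^{k+2}$ with only $\|v_h\|_2$ on the right is one of the two essential points of the proof; the paper does it by defining $\hat E_{\hat v}(\hat f)=\iint_{\hat K}(\hat F_I)_s\hat v\,dsdt-\iint_{\hat K}(\hat F_I)_s\hat v\,dsd^ht$ with $\hat F$ an $s$-antiderivative of $\hat f$, splitting $\hat v=\hat\Pi_1\hat v+(\hat v-\hat\Pi_1\hat v)$ using the projection \eqref{projection1}, showing $\hat E_{\hat\Pi_1\hat v}\equiv0$ by degree counting and $Q^{k-1}(\hat K)\subset\ker\hat E_{\hat v-\hat\Pi_1\hat v}$, and only then invoking Theorem \ref{bh-lemma}. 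Third, the interpolation-remainder term $\iint_{\hat K}\bigl(\hat a\hat u_s-(\hat a\hat u_s)_I\bigr)\hat v_s\,dsdt$ is not routine: \eqref{remark-interp-error} plus Cauchy--Schwarz gives only $\mathcal O(h^{k+1})$ times $\|a\|_{k+1,\infty}\|u\|_{k+2,e}|v_h|_{1,e}$, one order short, and the inverse estimate \eqref{inverseestimate} moves you further from, not toward, the target. The missing order comes from splitting $\hat v_s$ into its cell average plus fluctuation: the average is paired with the superconvergent integral bound \eqref{quaderror-theorem-ref} (which upgrades $[\,\cdot\,]_{k+1}$ to $[\,\cdot\,]_{k+2}$), and the fluctuation is controlled by $|\hat v|_{2,\hat K}$ via Poincar\'e, gaining a factor of $h$. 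Finally, note that \eqref{a-ah-buxv} and \eqref{a-ah-cuv} follow immediately from Theorem \ref{rhs-estimate} with $f=a\partial_xu$ or $f=au$; no cancellation argument is needed there.
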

\begin{remark}
 \label{rmk-consistency}
 We emphasize that Theorem \ref{a-ah} cannot be proven by applying the Bramble-Hilbert Lemma directly. Consider the constant coefficient case $a(x,y)\equiv 1$ and $k=2$ as an example, 
 $$(\partial_x u, \partial_x v_h)-\langle\partial_x u, \partial_x v_h\rangle_h=\sum_e  \left(\iint_e u_x (v_h)_x dxdy -\iint_e u_x(v_h)_x d^hxd^hy\right).$$
Since the $3 \times 3$ Gauss-Lobatto quadrature is exact for integrating $Q^3$ polynomials, 
by Theorem \ref{bh-lemma} we have 
 \[\left|\iint_e u_x (v_h)_x dxdy -\iint_e u_x(v_h)_x d^hxd^hy\right|=
 \left|\iint_{\hat K}  \hat u_s (\hat v_h)_s dsdt -\iint_{\hat K} \hat u_s(\hat v_h)_s d^hsd^ht\right|\leq C [\hat u_s (\hat v_h)_s]_{4,\hat K}.\]
 Notice that  $\hat v_h$ is $Q^2$ thus $(\hat v_h)_{stt}$ does not vanish and 
 $[(\hat v_h)_s]_{4,\hat K}\leq C |\hat v_h|_{3,\hat K}$. 
  So by Bramble-Hilbert Lemma for $Q^k$ polynomials, we can only get 
 \[\iint_e u_x (v_h)_x dxdy -\iint_e u_x(v_h)_x d^hxd^hy=\mathcal O(h^4) \|u\|_{5,e} \|v_h\|_{3,e}.\]
Thus by Cauchy-Schwarz inequality after summing over $e$, we only have 
 \[(\partial_x u, \partial_x v_h)-\langle\partial_x u, \partial_x v_h\rangle_h=\mathcal O(h^4) \|u\|_5\|v_h\|_3.\]
  \end{remark}

 In order to get the desired estimate involving only the $H^2$-norm of $v_h$, we will take advantage of error cancellations between neighboring cells through integration by parts.

 \begin{proof}
For simplicity, we ignore the subscript $_h$ of $v_h$ in this proof and all the following $v$ are in $V^h$ which are $Q^k$ polynomials in each cell.  
First, by Theorem \ref{rhs-estimate}, we easily obtain \eqref{a-ah-buxv} and \eqref{a-ah-cuv}: 
\begin{equation*}
(au_x,v)-\langle au_x, v\rangle_h=\mathcal O(h^{k+2})\|au_x\|_{k+2}\|v\|_{2}=\mathcal O(h^{k+2})\|a\|_{k+2,\infty}\|u\|_{k+3}\|v\|_{2},
\end{equation*}
\begin{equation*}
(au,v)-\langle au, v\rangle_h=\mathcal O(h^{k+2})\|au\|_{k+2}\|v\|_{2}=\mathcal O(h^{k+2})\|a\|_{k+2,\infty}\|u\|_{k+2}\|v\|_{2}.
\end{equation*}
We will only discuss $(au_x, v_x)-\langle au_x, v_x\rangle_h$ and the same discussion also applies to derive \eqref{a-ah-auxvyv0} and \eqref{a-ah-auxvyv}.

Since we have
\begin{align*}
 &(au_x, v_x)-\langle au_x, v_x\rangle_h=\sum_e \left( \iint_e au_x v_x dxdy -\iint_e au_xv_x d^hxd^hy\right)\\
= & \sum_e \left( \iint_{\hat K} \hat a \hat u_s \hat v_s dsdt -\iint_{\hat K} \hat a \hat u_s \hat v_s d^hsd^ht \right)
=\sum_e  \left( \iint_{\hat K} \hat a\hat u_s \hat v_s dsdt -\iint_{\hat K} (\hat a\hat u_s)_I\hat v_s d^hsd^ht \right),
 \end{align*}
where we use the fact $\hat a\hat u_s \hat v_s = (\hat a\hat u_s)_I\hat v_s$ on the Gauss-Lobatto quadrature points.
For fixed $t$, $(\hat a\hat u_s)_I\hat v_s$ is a polynomial of degree $2k-1$ w.r.t. variable $s$, thus the $(k+1)$-point Gauss-Lobatto quadrature is exact for its $s$-integration, i.e.,
$$\iint_{\hat K} (\hat a\hat u_s)_I\hat v_s d^hsd^ht=\iint_{\hat K} (\hat a\hat u_s)_I\hat v_s dsd^ht .$$
To estimate the quadrature error we introduce some intermediate values then do interpretation by parts,
\begin{align}
&\iint_{\hat K} \hat a\hat u_s \hat v_s dsdt -\iint_{\hat K} (\hat a\hat u_s)_I\hat v_s d^hsd^ht\\
=&\iint_{\hat K} \hat a\hat u_s \hat v_s dsdt -\iint_{\hat K} (\hat a\hat u_s)_I\hat v_s dsdt + \iint_{\hat K} (\hat a\hat u_s)_I\hat v_s dsdt-\iint_{\hat K} (\hat a\hat u_s)_I\hat v_s dsd^ht\\
= & \iint_{\hat K} \left[\hat a\hat u_s - (\hat a\hat u_s)_I \right] \hat v_s dsdt + \left( \iint_{\hat K} \left[(\hat a\hat u_s)_I\right]_s\hat v dsd^ht -\iint_{\hat K} \left[(\hat a\hat u_s)_I\right]_s\hat v dsdt \right) \\
& +\left( \left.\int_{-1}^1 (\hat a\hat u_s)_I\hat v dt \right|^{s=1}_{s=-1} - \left. \int_{-1}^1 (\hat a\hat u_s)_I\hat v d^ht \right|^{s=1}_{s=-1} \right) = I + II + III.\label{1dquadontint}
\end{align}

For the first term in \eqref{1dquadontint}, let $\overline{\hat v_s}$ be the cell average of $\hat v_s$ on $\hat K$, then  
\begin{align*}
I = \iint_{\hat K} \left(\hat a\hat u_s -(\hat a\hat u_s)_I\right)\overline{\hat v_s} dsdt+\iint_{\hat K} \left(\hat a\hat u_s -(\hat a\hat u_s)_I\right)(\hat v_s-\overline{\hat v_s})dsdt.
\end{align*}
 By \eqref{quaderror-theorem-ref} we have
\[
 \left|\iint_{\hat K} \left(\hat a\hat u_s -(\hat a\hat u_s)_I\right)\overline{\hat v_s} dsdt  \right| \leq C [\hat a\hat u_s]_{k+2,\hat K}\left |\overline{\hat v_s}\right | =\mathcal O(h^{k+2})\|\hat a\|_{k+2,\infty,e}\|\hat u\|_{k+3,e}\|\hat v\|_{1,e}.
\]
By Cauchy-Schwarz inequality, the Bramble-Hilbert Lemma on interpolation error and Poincar\'{e} inequality, we have 
\begin{align*}
& \left| \iint_{\hat K} \left(\hat a\hat u_s -(\hat a\hat u_s)_I\right)(\hat v_s-\overline{\hat v_s})dsdt \right| \leq|\hat a\hat u_s -(\hat a\hat u_s)_I|_{0,\hat K}|\hat v_s-\overline{\hat v_s}|_{0,\hat K} \\
\leq &  C[\hat a\hat u_s ]_{k+1,\hat K}|\hat v|_{2,\hat K} =\mathcal O(h^{k+2}) \|a\|_{k+1,\infty, e}\|u\|_{k+2, e}\|v\|_{2,e}.
\end{align*}
Thus we have 
\[I =\mathcal O(h^{k+2}) \|a\|_{k+2,\infty,e}\|u\|_{k+3,e}\|v\|_{2,e}.
\]
 For the second term in \eqref{1dquadontint}, we can estimate it the same way as in the proof of Theorem 2.4. in \cite{li2019superconvergence}.
For each $\hat v \in Q^k(\hat K)$ we can define a linear form as 
$$\hat E_{\hat v}(\hat f) = \iint_{\hat K}(\hat F_I)_s \hat v dsdt -\iint_{\hat K} (\hat F_I)_s\hat v dsd^ht,$$
where $\hat F$ is an antiderivative of $\hat f$ w.r.t. variable $s$. Due to the linearity of interpolation operator and differentiating operation, $\hat E_{\hat v}$ is well defined. 
By the embedding $H^{2}(\hat K)\hookrightarrow C^0(\hat K)$, we have
$$
\hat E_{\hat v}(\hat f) \leq C \|\hat F\|_{0,\infty,\hat K}\| \hat v\|_{0,\infty,\hat K} \leq C \|\hat f\|_{0,\infty,\hat K}\| \hat v\|_{0,\infty,\hat K} \leq C \|\hat f\|_{2,\hat K} \| \hat v\|_{0,\hat K} \leq C \|\hat f\|_{k,\hat K} \| \hat v\|_{0,\hat K},
$$
which implies that the mapping $\hat E_{\hat v}$ is a continuous linear form on $H^k(\hat K)$.
With projection $\Pi_1$ defined in \eqref{projection1}, we have 
$$
\hat E_{\hat v}(\hat f) = \hat E_{\hat v - \Pi_1\hat v} (\hat f)+ \hat E_{ \Pi_1\hat v}(\hat f),\quad  \forall \hat v \in Q^k(\hat K).
$$ 
Since $ Q^{k-1}(\hat K) \subset \ker \hat E_{\hat v - \Pi_1\hat v}$, thus by the Bramble-Hilbert Lemma, 
$$
\hat E_{\hat v - \Pi_1\hat v} (\hat f)  \leq C[f]_{k,\hat K}\|\hat v- \Pi_1\hat v\|_{0,\hat K}\leq C[f]_{k,\hat K}|\hat v|_{2,\hat K},  
$$
 and we also have
 $$
\hat E_{\Pi_1\hat v} (\hat f) =\iint_{\hat K}(\hat F_I)_s \Pi_1\hat v dsdt -\iint_{\hat K} (\hat F_I)_s\Pi_1\hat v dsd^ht=0. 
$$
Thus we have 
 \begin{align*}
& \iint_{\hat K} \left[(\hat a\hat u_s)_I\right]_s\hat v dsd^ht -\iint_{\hat K} \left[(\hat a\hat u_s)_I\right]_s\hat v dsdt  
= -\hat E_{\hat v}((\hat a\hat u_s)_s) = -\hat E_{\hat v - \Pi_1\hat v} ((\hat a\hat u_s)_s) \\
\leq & C [(\hat a\hat u_s)_s]_{k,\hat K}|\hat v_h|_{2,\hat K} \leq C |\hat a\hat u_s|_{k+1,\hat K}|\hat v|_{2,\hat K} 
= \mathcal O(h^{k+2})\|a\|_{k+1,\infty, e}\|u\|_{k+2,e}|v|_{2,e} 
 \end{align*}

Now we only need to discuss the line integral term. Let $L_2$ and $L_4$ denote the left and right boundary of $\Omega$ and let $l^e_2$ and $l^e_4$ denote the left and right edge of element $e$ or $l^{\hat K}_2$ and $l^{\hat K}_4$ for $\hat K$. Since $(\hat a\hat u_s)_I\hat v$ mapped back to $e$ will be $\frac{1}{h}(au_x)_Iv$ which is continuous across $l^e_2$ and $l^e_4$, after summing over all elements $e$, the line integrals along the inner edges are canceled  out and only the line integrals on $L_2$ and $L_4$ remain. 

For a cell $e$ adjacent to $L_2$, consider its reference cell $\hat K$, and define a linear form  $\hat E(\hat f ) = \int_{-1}^1 \hat f(-1,t) dt - \int_{-1}^1 \hat f(-1,t)d^ht$, then we  have
$$\hat E(\hat f \hat v)  \leq C |\hat f|_{0,\infty, l^{\hat K}_2} |\hat v|_{0,\infty,l^{\hat K}_2} \leq C\|\hat f\|_{2,l^{\hat K}_2}\|\hat v\|_{0,l^{\hat K}_2},$$ which means that the mapping $\hat f \rightarrow \hat E(\hat f \hat v)$ is continuous with operator norm less than $C\|\hat v\|_{0,l^{\hat K}_2}$ for some $C$. Clearly we have 
\begin{align*}
\hat E(\hat f \hat v) = \hat E(\hat f \Pi_1 \hat v) + \hat E(\hat f( \hat v-\Pi_1 \hat v)).
\end{align*}

By   the Bramble-Hilbert Lemma \eqref{bh-lemma} we get 
\begin{align*}
 & \hat E((\hat a\hat u_s)_I(\hat v-\Pi_1 \hat v)) \leq C[(\hat a\hat u_s)_I]_{k,l^{\hat K}_2}[\hat v]_{2,l^{\hat K}_2} \leq C(|\hat a\hat u_s-(\hat a\hat u_s)_I|_{k,l^{\hat K}_2}+|\hat a\hat u_s|_{k,l^{\hat K}_2})[\hat v]_{2,l^{\hat K}_2}\\
 \leq & (|\hat a\hat u_s|_{k+1,l^{\hat K}_2}+|\hat a\hat u_s|_{k,l^{\hat K}_2})[\hat v]_{2,l^{\hat K}_2} = \mathcal O(h^{k+2})\|a\|_{k+1,\infty, l^e_2}\|u\|_{k+2,l^e_2}[v]_{2,l^{e}_2},
\end{align*}
and
\begin{align*}
\hat E((\hat a\hat u_s)_I\Pi_1 \hat v)  = 0.
\end{align*}

For the third term in \eqref{1dquadontint}, we sum them up over all the elements. Then for the line integral along $L_2$
\begin{align*}
&\sum_{e \cap L_2\neq\emptyset}\int_{-1}^1 (\hat a\hat u_s)_I(-1,t)\hat v(-1,t) dt  - \sum_{e \cap L_2\neq\emptyset} \int_{-1}^1 (\hat a\hat u_s)_I(-1,t)\hat v(-1,t) d^ht \\ 
= &\sum_{e \cap L_2\neq\emptyset}\hat E((\hat a\hat u_s)_I\hat v )
=  \sum_{e \cap L_2\neq\emptyset} \mathcal O(h^{k+2}) \|a\|_{k+1,\infty, l^e_2}\|u\|_{k+2,l^e_2}|v|_{2,l^{e}_2}.
\end{align*}

Let $s_\alpha$ and $\omega_\alpha$ ($\alpha=1,2,\cdots, k+2$) denote the quadrature points and weights in $(k+2)$-point Gauss-Lobatto quadrature rule for $s\in [-1,1]$.   Since $\hat v^2_{tt}(s, t)\in Q^{2k}(\hat K)$, $(k+2)$-point Gauss-Lobatto quadrature is exact for $s$-integration thus 
\[\int_{-1}^1\int_{-1}^1 \hat v_{tt}^2(s, t)dsdt=\sum_{\alpha=1}^{k+2}\omega_\alpha \int_{-1}^1 \hat v_{tt}^2(s_\alpha, t)dt,\]
which implies
\begin{equation}
\int_{-1}^1 \hat v_{tt}^2(\pm1, t)dt \leq C\int_{-1}^1\int_{-1}^1 \hat v_{tt}^2(s, t)dsdt,
\label{trace-polynomial}
\end{equation}
thus
\begin{align*}
h^{\frac{1}{2}}|v|_{2,l^{e}_2}\leq C [v]_{2,e}.
\end{align*} 
By Cauchy-Schwarz inequality and trace inequality, we have
\begin{align*}
& \sum_{e \cap L_2\neq\emptyset} \left (\left.\int_{-1}^1 (\hat a\hat u_s)_I\hat v dt \right|^{s=1}_{s=-1} - \left. \int_{-1}^1 (\hat a\hat u_s)_I\hat v d^ht \right|^{s=1}_{s=-1} \right) \\
 =& \sum_{e \cap L_2\neq\emptyset} \mathcal O(h^{k+2}) \|a\|_{k+1,\infty, l^e_2}\|u\|_{k+2,l^e_2}|v|_{2,l^{e}_2}\\
= & \sum_{e \cap L_2\neq\emptyset} \mathcal O(h^{k+\frac{3}{2}}) \|a\|_{k+1,\infty,l^e_2}\|u\|_{k+2,l^e_2}|v|_{2,e} = \mathcal O(h^{k+\frac{3}{2}}) \|a\|_{k+1,\infty,\Omega}\|u\|_{k+2,L_2}|v|_{2,\Omega} \\
= & \mathcal O(h^{k+\frac{3}{2}}) \|a\|_{k+1,\infty,\Omega}\|u\|_{k+3,\Omega}|v|_{2,\Omega}.
\end{align*}

Combine all the estimates above, we get \eqref{a-ah-auxvxv}. Since the $\frac 12$ order loss is only due to the line integral along the boundary $\partial \Omega$. If $v\in V_0^h$,  $v_{yy}=0$ on $L_2$ and $L_4$ so we have
 \eqref{a-ah-auxvxv0}.
\end{proof}

\section{Superconvergence of bilinear forms}
\label{sec-bilinear}
The M-type projection in \cite{MR635547, chen2001structure} is a very convenient tool for discussing the superconvergence of function values. Let $u_p$ be the M-type  $Q^k$ projection of the smooth exact solution $u$ and its definition will be given in the following subsection.
To establish the superconvergence of the original finite element method \eqref{intro-scheme1} for a generic elliptic problem \eqref{bilinearform} with smooth coefficients, 
one can show the following superconvergence of bilinear forms, see \cite{chen2001structure, lin1996} (see also \cite{li2019superconvergence} for a detailed proof):
\[
A(u-u_p,v_h)=
\begin{cases}
\mathcal O(h^{k+2})\|u\|_{k+3}\|v_h\|_2,\quad \forall v_h\in V^h_0,\\
\mathcal O(h^{k+\frac32})\|u\|_{k+3}\|v_h\|_2,\quad \forall v_h\in V^h. 
\end{cases}
\]
In this section we will show the superconvergence of the bilinear form $A_h$:
\begin{subnumcases}
{A_h(u-u_p,v_h)=}
\mathcal O(h^{k+2})\|u\|_{k+3}\|v_h\|_2,\quad \forall v_h\in V_0^h,\label{ellip-u-up-1}\\
\mathcal O(h^{k+\frac32})\|u\|_{k+3}\|v_h\|_2,\quad \forall v_h\in V^h\label{ellip-u-up-2}.
\end{subnumcases}

\subsection{Definition of M-type projection}
\label{sec-m-projection}
We first recall the definition of M-type projection. More detailed definition can also be found in  \cite{li2019superconvergence}. Legendre polynomials on the reference 
interval $[-1,1]$ are given as
\[l_k(t)=\frac{1}{2^k k!}\frac{d^k}{dt^k} (t^2-1)^k: l_0(t)=1, l_1(t)=t, l_2(t)=\frac12(3t^2-1), \cdots,\]
which are $L^2$-orthogonal to one another.
Define their antiderivatives as M-type polynomials:
\[M_{k+1}(t)=\frac{1}{2^k k!}\frac{d^{k-1}}{dt^{k-1}} (t^2-1)^k: M_0(t)=1, M_1(t)=t, M_2(t)=\frac12(t^2-1), M_3(t)=\frac12(t^3-t),\cdots.\]
which satisfy the following properties:
\begin{itemize}
 \item If $j-i\neq 0, \pm2$, then $M_i(t)\perp M_j(t)$, i.e., $\int_{-1}^1 M_i(t)M_j(t) dt=0.$
 \item Roots of $M_k(t)$ are the $k$-point Gauss-Lobatto quadrature points for $[-1,1]$. 
\end{itemize}
Since Legendre polynomials form a complete orthogonal basis for $L^{2}([-1,1])$,
for any $\hat f(t)\in H^1([-1,1])$,  its derivative $\hat f'(t)$ can be expressed as
Fourier-Legendre series
\[\hat f'(t)=\sum_{j=0}^{\infty}\hat b_{j+1}l_j(t), \quad \hat b_{j+1}=(j+\frac12)\int_{-1}^1 \hat f'(t)l_j(t)dt.\]
 The one-dimensional M-type projection is defined as
$
\hat f_k(t)=\sum_{j=0}^{k}\hat b_{j}M_j(t),
$ 
where $\hat b_0=\frac{\hat f(1)+\hat f(-1)}{2}$ is determined by $\hat b_1=\frac{\hat f(1)-\hat f(-1)}{2}$
so that $\hat f_k(\pm 1)=\hat f(\pm 1)$. 
We have
$
\hat f(t)=\lim\limits_{k\to \infty}\hat f_k(t)=\sum\limits_{j=0}^{\infty}\hat b_{j}M_j(t).
$
The remainder $\hat R[\hat f]_k(t)$ of one-dimensional M-type projection  is
\[\hat R[\hat f]_k(t)=\hat f(t)-\hat f_k(t)=\sum_{j=k+1}^{\infty}\hat b_{j}M_j(t).\]

For a function $\hat f(s,t)\in H^2(\hat K)$ on the reference cell  $\hat K=[-1,1]\times[-1,1]$, its two-dimensional M-type expansion is given as  
\[\hat f(s,t)=\sum_{i=0}^\infty\sum_{j=0}^\infty \hat b_{i,j} M_i(s)M_j(t),\]
where 
\begin{align*}
\hat b_{0,0}&=\frac14[\hat f(-1,-1)+\hat f(-1,1)+\hat f(1,-1)+\hat f(1,1)],\\
\hat b_{0,j}, \hat b_{1,j}&=\frac{2j-1}{4}\int_{-1}^1 [\hat f_t(1,t)\pm \hat f_t(-1,t)]l_{j-1}(t)dt, \quad j\geq 1,\\
\hat b_{i,0}, \hat b_{i,1}&=\frac{2i-1}{4}\int_{-1}^1 [\hat f_s(s,1)\pm \hat f_s(s,-1)]l_{i-1}(s)ds, \quad i\geq 1,\\
\hat b_{i,j}&=\frac{(2i-1)(2j-1)}{4}\iint_{\hat K}\hat f_{st}(s,t)l_{i-1}(s)l_{j-1}(t)dsdt, 
\quad i,j\geq 1.\end{align*}
The  M-type $Q^k$ projection of $\hat f$ on $\hat K$ and its remainder are defined as
\[\hat f_{k,k}(s,t)=\sum_{i=0}^k\sum_{j=0}^k \hat b_{i,j} M_i(s)M_j(t), \quad \hat R[\hat f]_{k,k}(s,t)=\hat f(s,t)-\hat f_{k,k}(s,t).\]
The  M-type $Q^k$ projection is equivalent to the point-line-plane interpolation used in \cite{lin1991rectangle, lin1996}. See \cite{li2019superconvergence} for the proof of the following fact:
\begin{theorem}
\label{plp-projection-theorem}
 The  M-type $Q^k$ projection is equivalent to the $Q^k$ point-line-plane projection $\Pi$ defined as
 follows:
 \begin{enumerate}
  \item $\Pi \hat u=\hat u$ at four corners of $\hat K=[-1,1]\times[-1,1]$.
  \item $\Pi \hat u-\hat u$ is orthogonal to polynomials of degree $k-2$ on each edge of $\hat K$.
  \item $\Pi \hat u-\hat u$ is orthogonal to any $\hat v\in Q^{k-2}(\hat K)$ on $\hat K$. 
  \end{enumerate}
\end{theorem}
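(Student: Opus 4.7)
The plan is to verify that the M-type $Q^k$ projection $\hat f_{k,k}$ satisfies the three defining properties of the point-line-plane projection $\Pi$, and to confirm that these three properties uniquely determine an element of $Q^k(\hat K)$; then the two projections must coincide.

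First I would establish uniqueness. Suppose $p \in Q^k(\hat K)$ satisfies the homogeneous versions of (1), (2), and (3), and show $p \equiv 0$. Condition (1) gives $p(\pm 1, \pm 1) = 0$. On the edge $t = 1$, $p(s,1)$ is a polynomial of degree $\leq k$ in $s$ vanishing at $s = \pm 1$, so $p(s,1) = (1-s^2) q(s)$ for some $q$ of degree $\leq k-2$. Condition (2) applied with test function $q$ yields $\int_{-1}^1 (1-s^2) q(s)^2 \, ds = 0$, forcing $q \equiv 0$ and hence $p(s,1) \equiv 0$. Repeating on the other three edges shows $p$ vanishes on $\partial \hat K$, so $p(s,t) = (1-s^2)(1-t^2) r(s,t)$ for some $r \in Q^{k-2}(\hat K)$. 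Applying condition (3) with $v = r$ gives $\iint_{\hat K}(1-s^2)(1-t^2) r^2 \, dsdt = 0$, hence $r \equiv 0$ and $p \equiv 0$. The total count $4 + 4(k-1) + (k-1)^2 = (k+1)^2 = \dim Q^k(\hat K)$ matches the number of constraints, so the three conditions determine $\Pi \hat u$ uniquely.

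Next I would verify that $\hat f_{k,k}$ satisfies all three properties. For (1), use $M_i(\pm 1) = 0$ for $i \geq 2$, so $\hat f_{k,k}(\pm 1, \pm 1)$ depends only on $\hat b_{0,0}, \hat b_{0,1}, \hat b_{1,0}, \hat b_{1,1}$; substituting the explicit formulas for these coefficients (after evaluating the derivative integrals via the fundamental theorem of calculus) recovers $\hat f_{k,k}(\pm 1, \pm 1) = \hat f(\pm 1, \pm 1)$ directly. For (2) on the edge $t = 1$, the restriction of the remainder is
\[
\hat R[\hat f]_{k,k}(s,1) = \sum_{(i,j)\notin [0,k]^2} \hat b_{i,j} M_i(s) M_j(1).
\]
Since $M_j(1) = 0$ for $j \geq 2$, only $j \in \{0,1\}$ contribute, leaving a linear combination of $\{M_i(s): i \geq k+1\}$. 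By the property $M_i \perp M_j$ whenever $|i-j| \notin \{0, 2\}$, every $M_i$ with $i \geq k+1$ is $L^2$-orthogonal to each $M_j$ with $j \leq k-2$ (since $i - j \geq 3$), hence to every polynomial of degree $\leq k-2$. The argument on the other three edges is identical. For (3), a tensor-product version of the same orthogonality handles the interior: for any $v = \sum_{a,b\leq k-2} c_{a,b} M_a(s) M_b(t) \in Q^{k-2}(\hat K)$, each surviving term $\hat b_{i,j} M_i(s) M_j(t)$ in $\hat R[\hat f]_{k,k}$ has $i \geq k+1$ or $j \geq k+1$, so at least one of the separated one-dimensional integrals $\int_{-1}^1 M_i M_a \, ds$ or $\int_{-1}^1 M_j M_b \, dt$ vanishes.

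The main obstacle I anticipate is the bookkeeping in (1) and in part of (2): one must carefully track the $\pm$ sign conventions in the definitions of $\hat b_{0,j}, \hat b_{1,j}, \hat b_{i,0}, \hat b_{i,1}$ so that the corner values reproduce $\hat f(\pm 1, \pm 1)$ and the edge trace reduces to the expected sum over $i \geq k+1$ with the right coefficients. Once those identifications are verified, combining uniqueness with the three verifications above yields the equivalence immediately.
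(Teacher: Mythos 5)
Your proposal is correct: the uniqueness argument via the factorization $p=(1-s^2)(1-t^2)r$ together with the dimension count $4+4(k-1)+(k-1)^2=(k+1)^2$, and the verification of the three conditions using $M_i(\pm1)=0$ for $i\ge 2$ and the orthogonality $M_i\perp M_j$ for $|i-j|\neq 0,2$, is exactly the standard route. The paper itself does not reproduce a proof of Theorem~\ref{plp-projection-theorem} but defers to \cite{li2019superconvergence}, whose argument is the same verification of the three defining properties of $\Pi$ for the M-type projection, so your approach matches.
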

For $f(x,y)$ on $e=[x_e-h, x_e+h]\times [y_e-h, y_e+h]$, let $\hat f(s,t)=
f( sh+x_e,  t h+y_e)$ then  the M-type $Q^k$ projection of $f$ on $e$ and its remainder are defined as 
\[f_{k,k}(x,y)=\hat f_{k,k}(\frac{x-x_e}{h},\frac{y-y_e}{h}), \quad R[ f]_{k,k}(x,y)= f(x,y)- f_{k,k}(x,y).\]
Now consider a function $u(x, y)\in H^{k+2} (\Omega)$, let
$u_p (x, y)$ denote its piecewise M-type $Q^k$ projection on each element $e$ in the mesh
$\Omega_h$. The first two properties in Theorem \ref{plp-projection-theorem} imply that $u_p (x, y)$  on each edge of $e$ is
uniquely determined by 
$u(x, y)$ along that edge. So $u_p (x, y)$  is a piecewise continuous $Q^k$ polynomial on $\Omega_h$.

M-type projection has the following properties. See \cite{li2019superconvergence} for the proof.

\begin{theorem}
\label{thm-superapproximation}
 \[\|u-u_p\|_{2,Z_0}=\mathcal O(h^{k+2}) \|u\|_{k+2},\quad\forall u\in H^{k+2}(\Omega).\]
 \[\|u-u_p\|_{\infty,Z_0}=\mathcal O(h^{k+2}) \|u\|_{k+2,\infty},
 \quad\forall u\in W^{k+2,\infty}(\Omega).\]
\end{theorem}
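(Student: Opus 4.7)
The plan hinges on a single clean observation: the $(k+1)$-point Gauss-Lobatto nodes on $[-1,1]$ are exactly the roots of $M_{k+1}$, so any two-dimensional M-type basis function $M_i(s)M_j(t)$ with $i=k+1$ or $j=k+1$ vanishes identically on $Z_{0,\hat K}$. This strongly suggests that at Gauss-Lobatto points the M-type $Q^k$ projection agrees pointwise with the M-type $Q^{k+1}$ projection, which buys us one extra order of approximation.

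Explicitly, on the reference cell I would decompose
\begin{align*}
\hat R[\hat u]_{k,k}(s,t)=\hat R[\hat u]_{k+1,k+1}(s,t)+\sum_{j=0}^{k+1}\hat b_{k+1,j}M_{k+1}(s)M_j(t)+\sum_{i=0}^{k}\hat b_{i,k+1}M_i(s)M_{k+1}(t).
\end{align*}
Evaluating at any $(s_\alpha,t_\beta)\in Z_{0,\hat K}$ kills the last two sums because $M_{k+1}(s_\alpha)=M_{k+1}(t_\beta)=0$, leaving $\hat R[\hat u]_{k,k}(s_\alpha,t_\beta)=\hat R[\hat u]_{k+1,k+1}(s_\alpha,t_\beta)$. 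This is the core reduction. Since the $Q^{k+1}$ M-type projection reproduces every $Q^{k+1}(\hat K)$ polynomial, the linear functional $\hat u\mapsto \hat R[\hat u]_{k,k}(s_\alpha,t_\beta)$ vanishes on $Q^{k+1}(\hat K)$, not merely on $Q^k(\hat K)$.

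I would then apply the Bramble--Hilbert Lemma (Theorem \ref{bh-lemma}) with the polynomial space taken to be $Q^{k+1}$. Continuity of the relevant functionals on $H^{k+2}(\hat K)$ (respectively $W^{k+2,\infty}(\hat K)$) is immediate from the Sobolev embedding $H^{k+2}(\hat K)\hookrightarrow C^0(\hat K)$ together with the fact that the M-type coefficients $\hat b_{i,j}$ are bounded linear functionals of $\hat u$ in these spaces. Summing over the finitely many Gauss-Lobatto nodes of $\hat K$, Bramble--Hilbert yields
\[\left(\sum_{(s_\alpha,t_\beta)\in Z_{0,\hat K}}|\hat R[\hat u]_{k,k}(s_\alpha,t_\beta)|^2\right)^{1/2}\leq C[\hat u]_{k+2,\hat K},\qquad \max_{Z_{0,\hat K}}|\hat R[\hat u]_{k,k}|\leq C[\hat u]_{k+2,\infty,\hat K}.\]

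Finally I would map back to a physical cell $e$ through the scaling $[\hat u]_{k+2,p,\hat K}=h^{k+2-2/p}[u]_{k+2,p,e}$ listed in the preliminaries. For the discrete $2$-norm, multiplying by the factor $h^2$ in the definition of $\|\cdot\|_{2,Z_0}$ and summing over all cells yields $\|u-u_p\|_{2,Z_0}^2\leq Ch^{2k+4}\sum_e[u]_{k+2,e}^2\leq Ch^{2k+4}\|u\|_{k+2}^2$; for the maximum norm, the pointwise bound with $p=\infty$ gives $\|u-u_p\|_{\infty,Z_0}\leq Ch^{k+2}\|u\|_{k+2,\infty}$ directly. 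I do not anticipate a serious obstacle: once the algebraic coincidence of the $Q^k$ and $Q^{k+1}$ M-type remainders on $Z_{0,\hat K}$ is recorded, the rest is standard Bramble--Hilbert bookkeeping of the kind already used elsewhere in the paper.
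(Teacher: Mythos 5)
Your proposal is correct and follows essentially the same route as the proof the paper cites (and whose ingredients it records in Lemma \ref{estimateslemma}): the difference $\hat R[\hat u]_{k,k}-\hat R[\hat u]_{k+1,k+1}$ consists of terms carrying a factor $M_{k+1}(s)$ or $M_{k+1}(t)$ and hence vanishes at the Gauss--Lobatto nodes, so the pointwise remainder is controlled by the $Q^{k+1}$ remainder, which the Bramble--Hilbert Lemma bounds by $C[\hat u]_{k+2,\hat K}$, and the stated scalings then give both norms. The only cosmetic remark is that for the maximum-norm statement you can avoid invoking a $W^{k+2,\infty}$ version of Bramble--Hilbert by simply bounding $[\hat u]_{k+2,\hat K}\leq C[\hat u]_{k+2,\infty,\hat K}$ on the fixed reference cell.
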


\begin{lemma}\label{estimateslemma}
 For $\hat f \in H^{k+1}(\hat K)$, $k \geq 2$,
 \begin{enumerate} 
  \item
  $|\hat R[\hat f]_{k,k}|_{0,\infty,\hat K}\leq C [\hat f]_{k+1,\hat K},\quad |\partial_s \hat R[\hat f]_{k,k}|_{0,\infty, \hat K}\leq C[\hat f]_{k+1,\hat K}.$
\item $\hat R[\hat f]_{k+1,k+1}-\hat R[\hat f]_{k,k}=M_{k+1}(t)\sum_{i=0}^k \hat b_{i,k+1}M_{i}(s)+M_{k+1}(s)\sum_{j=0}^{k+1}\hat b_{k+1,j}M_j(t).$ 
 \item 
 $|\hat b_{i,k+1}|\leq C_k |\hat f|_{k+1,2,\hat K},|\hat b_{k+1,i}|\leq C_k |\hat f|_{k+1,2,\hat K},\quad 0\leq i\leq k+1.$
  \item If $\hat f\in H^{k+2}(\hat K)$, then $|\hat b_{i,k+1}|\leq C_k |\hat f|_{k+2,2,\hat K},\quad 1\leq i\leq k+1.$
\end{enumerate}
\end{lemma}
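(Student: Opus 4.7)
The plan is to establish parts (1), (3), and (4) through the Bramble--Hilbert framework of Theorem \ref{bh-lemma}, and to derive part (2) by a direct algebraic manipulation of the M-type double series.

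For part (1), I view $T\colon \hat f\mapsto \hat R[\hat f]_{k,k}$ as a bounded linear operator from $H^{k+1}(\hat K)$ into $W^{1,\infty}(\hat K)$. Each M-type coefficient $\hat b_{i,j}$ with $0\le i,j\le k$ is a continuous linear functional on $H^{k+1}(\hat K)$: for $i,j\ge 2$ this follows from Cauchy--Schwarz applied to the area integral of $\hat f_{st}$, and for the boundary-type cases ($i\in\{0,1\}$ or $j\in\{0,1\}$) from the trace theorem applied to $\hat f_t(\pm 1,\cdot)$ or $\hat f_s(\cdot,\pm 1)$. Combined with the Sobolev embedding $H^{k+1}(\hat K)\hookrightarrow W^{1,\infty}(\hat K)$ for $k\ge 2$, this yields continuity of $T$ into $W^{1,\infty}(\hat K)$. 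Clearly $T\hat v=0$ for $\hat v\in Q^k(\hat K)$. Fixing any $(s_0,t_0)\in \hat K$, the evaluation functionals $\hat f\mapsto (T\hat f)(s_0,t_0)$ and $\hat f\mapsto \partial_s(T\hat f)(s_0,t_0)$ are continuous linear forms on $H^{k+1}(\hat K)$ that annihilate $Q^k(\hat K)$, with operator norms uniformly bounded in $(s_0,t_0)$. The linear-form version of Theorem \ref{bh-lemma} then produces the pointwise bounds of part (1).

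Part (2) is purely algebraic: since $\hat R[\hat f]_{k+1,k+1}-\hat R[\hat f]_{k,k}=\hat f_{k,k}-\hat f_{k+1,k+1}$, the right-hand side collects exactly those terms of the partial sum $\sum_{0\le i,j\le k+1}\hat b_{i,j}M_i(s)M_j(t)$ whose index pair lies outside $\{0\le i,j\le k\}$, i.e., those with $i=k+1$ or $j=k+1$, which matches the stated expression up to the natural sign convention. For part (3), each of $\hat b_{i,k+1}$ and $\hat b_{k+1,i}$ is a continuous linear functional on $H^{k+1}(\hat K)$ by the same continuity arguments as for part (1), and it annihilates $Q^k(\hat K)$: if $\hat f\in Q^k$ then $\hat f=\hat f_{k,k}$ and all coefficients outside $\{0\le i,j\le k\}$ vanish. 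Theorem \ref{bh-lemma} therefore yields $|\hat b_{i,k+1}|\le C[\hat f]_{k+1,\hat K}\le C|\hat f|_{k+1,\hat K}$, and similarly for $\hat b_{k+1,i}$.

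The upgrade in part (4) comes from showing that, for $i\ge 1$, the functional $\hat b_{i,k+1}=\tfrac{(2i-1)(2k+1)}{4}\iint_{\hat K}\hat f_{st}\,l_{i-1}(s)l_k(t)\,dsdt$ annihilates the larger space $P^{k+1}(\hat K)$ of polynomials of total degree $\le k+1$. Testing on a monomial $s^at^b$ with $a+b\le k+1$: if $a=0$ or $b=0$ the mixed derivative vanishes identically; otherwise the integral factors as $ab\int_{-1}^1 s^{a-1}l_{i-1}(s)\,ds\cdot\int_{-1}^1 t^{b-1}l_k(t)\,dt$, and the second factor is zero because $a\ge 1$ forces $b\le k$, so $t^{b-1}$ has degree strictly below $k$ and is annihilated by $l_k$. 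The classical Bramble--Hilbert lemma for functionals vanishing on $P^{k+1}$ then delivers $|\hat b_{i,k+1}|\le C|\hat f|_{k+2,\hat K}$. The restriction $i\ge 1$ is sharp, since $\hat b_{0,k+1}$ fails to annihilate $t^{k+1}\in P^{k+1}$, which is precisely why part (4) excludes $i=0$ and falls back on the part (3) bound there. The main delicate point throughout is carefully tracking the $i\in\{0,1\}$ boundary-trace representations of the coefficients to secure continuity on $H^{k+1}(\hat K)$; once continuity and the correct polynomial-annihilation property are in hand, Bramble--Hilbert finishes everything mechanically.
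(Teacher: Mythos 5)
The paper does not actually prove this lemma in its text (it defers to \cite{li2019superconvergence}), and your argument is correct and follows the same standard route: Bramble--Hilbert applied to the coefficient functionals and the remainder operator (which are continuous on $H^{k+1}(\hat K)$ for $k\ge 2$ and annihilate $Q^k(\hat K)$) for parts (1) and (3), direct bookkeeping of the double M-type series for part (2), and for part (4) the observation that $\hat b_{i,k+1}$ with $i\ge 1$ annihilates all of $P^{k+1}(\hat K)$ because $\int_{-1}^1 t^{b-1}l_k(t)\,dt=0$ whenever $b\le k$, so the classical Bramble--Hilbert lemma upgrades the bound to $|\hat f|_{k+2,\hat K}$. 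Your sign remark in part (2) is also right: since $\hat R[\hat f]_{m,m}=\hat f-\hat f_{m,m}$, the difference $\hat R[\hat f]_{k+1,k+1}-\hat R[\hat f]_{k,k}$ equals $\hat f_{k,k}-\hat f_{k+1,k+1}$, the \emph{negative} of the displayed sum; this is a harmless slip in the statement, as the identity is only ever invoked for magnitude bounds and for quadrature expressions that vanish.
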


\subsection{Estimates of M-type projection with quadrature}

\begin{lemma}\label{quad-int-rs}
Assume $\hat f(s,t) \in H^{k+3}(\hat K)$,
\[
\langle \hat R[\hat f]_{k+1,k+1}- \hat R[\hat f]_{k,k},  1 \rangle_{\hat K}=0,\quad |\langle \partial_s \hat R[\hat f]_{k+1,k+1} , 1 \rangle_{\hat K}| \leq C |\hat f|_{k+3,\hat K}.
\]
\end{lemma}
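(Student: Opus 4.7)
The plan is to prove the two claims separately, leveraging the explicit structure of the difference $\hat R[\hat f]_{k+1,k+1} - \hat R[\hat f]_{k,k}$ provided by Lemma \ref{estimateslemma}(2) for the first statement, and a Bramble-Hilbert argument for the second.

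For the equality, I would start from Lemma \ref{estimateslemma}(2), which gives
\[
\hat R[\hat f]_{k+1,k+1} - \hat R[\hat f]_{k,k} = M_{k+1}(t)\sum_{i=0}^{k} \hat b_{i,k+1}M_i(s) + M_{k+1}(s)\sum_{j=0}^{k+1}\hat b_{k+1,j}M_j(t).
\]
The key observation is that the roots of $M_{k+1}$ on $[-1,1]$ are precisely the $(k+1)$ Gauss-Lobatto nodes. Hence every summand, having a factor $M_{k+1}(s)$ or $M_{k+1}(t)$, vanishes at every tensor-product Gauss-Lobatto node of $\hat K$, and consequently $\langle \hat R[\hat f]_{k+1,k+1} - \hat R[\hat f]_{k,k}, 1\rangle_{\hat K} = 0$.

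For the inequality, I would treat $L(\hat f) := \langle \partial_s \hat R[\hat f]_{k+1,k+1}, 1\rangle_{\hat K}$ as a linear functional on $H^{k+3}(\hat K)$ and apply the Bramble-Hilbert lemma. Continuity of $L$ on $H^{k+3}$ follows from $|\partial_s \hat R[\hat f]_{k+1,k+1}|_{0,\infty,\hat K} \leq C [\hat f]_{k+2,\hat K}$ (the $(k+1,k+1)$ analog of Lemma \ref{estimateslemma}(1)) combined with the embedding $H^{k+3}\hookrightarrow H^{k+2}$. The heart of the argument is to verify that $L$ annihilates every polynomial of total degree at most $k+2$. Any such polynomial is either in $Q^{k+1}(\hat K)$, in which case $\hat R[\hat f]_{k+1,k+1} = 0$ and $L(\hat f)=0$, or a scalar multiple of $s^{k+2}$ or $t^{k+2}$. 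For $t^{k+2}$, the remainder depends only on $t$ so $\partial_s$ annihilates it. For $s^{k+2}$, the remainder is the one-dimensional remainder $c\,M_{k+2}(s)$ for some constant $c$, and since $M_{k+2}'(s) = l_{k+1}(s)$, one obtains $\partial_s\hat R[s^{k+2}]_{k+1,k+1} = c\, l_{k+1}(s)$; the $(k+1)$-point Gauss-Lobatto rule is exact for polynomials of degree $2k-1\geq k+1$ (as $k\geq 2$), so the quadrature coincides with $\int_{-1}^{1} l_{k+1}(s)\,ds$, which vanishes by orthogonality to $l_0$. Thus $L$ vanishes on $P^{k+2}(\hat K)$, and the Bramble-Hilbert lemma delivers $|L(\hat f)| \leq C |\hat f|_{k+3,\hat K}$.

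The main obstacle is isolating the correct annihilation property. One is tempted to ask for vanishing on $Q^{k+1}$ (which is immediate) and then use Lemma \ref{estimateslemma}(1), but that only yields a bound in $[\hat f]_{k+2,\hat K}$, which is too weak for later applications. The sharper bound in $|\hat f|_{k+3,\hat K}$ hinges on the observation that the derivative $\partial_s$ together with the Gauss-Lobatto exactness for $l_{k+1}$ buys us exactly one extra order beyond $Q^{k+1}$, pushing the annihilation from $Q^{k+1}$ up to $P^{k+2}$, and this delicate exactness threshold $k+1\leq 2k-1$ is where the assumption $k\geq 2$ enters critically.
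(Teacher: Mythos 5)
Your proof of the first identity is the same as the paper's: both use part (2) of Lemma \ref{estimateslemma} and the fact that $M_{k+1}$ vanishes at the $(k+1)$ Gauss--Lobatto nodes. For the second bound, however, you take a genuinely different route. The paper writes $\langle \partial_s \hat R[\hat f]_{k+1,k+1},1\rangle_{\hat K} = \langle \partial_s \hat R[\hat f]_{k+2,k+2},1\rangle_{\hat K} - \langle \partial_s(\hat R[\hat f]_{k+2,k+2}-\hat R[\hat f]_{k+1,k+1}),1\rangle_{\hat K}$, bounds the first term by the sup-norm estimate of Lemma \ref{estimateslemma}(1) at level $k+2$, and expands the second term explicitly: the $M_{k+2}(t)$-part is controlled by the coefficient bounds $|\hat b_{i+1,k+2}|\leq C|\hat f|_{k+3,\hat K}$ from Lemma \ref{estimateslemma}(4), while the $l_{k+1}(s)$-part vanishes by quadrature exactness and orthogonality. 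You instead treat $L(\hat f)=\langle \partial_s \hat R[\hat f]_{k+1,k+1},1\rangle_{\hat K}$ as a continuous linear form on $H^{k+3}(\hat K)$ and verify it annihilates $P^{k+2}(\hat K)$; since $P^{k+2}\subset Q^{k+1}\oplus\operatorname{span}\{s^{k+2},t^{k+2}\}$, the only nontrivial check is $\hat f=s^{k+2}$, where $\partial_s\hat R[s^{k+2}]_{k+1,k+1}=c\,l_{k+1}(s)$ and the same exactness/orthogonality argument applies. Both proofs hinge on exactly this cancellation for $l_{k+1}(s)$, but yours replaces the explicit coefficient estimates by one invocation of the classical $P^m$ Bramble--Hilbert lemma. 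Two small caveats: you are using the $P^{k+2}$ version of Bramble--Hilbert (which yields the full seminorm $|\hat f|_{k+3,\hat K}$), whereas the paper only states the $Q^k$ version in Theorem \ref{bh-lemma} --- the $P^m$ version is standard (Theorem 4.1.3 of Ciarlet, which the paper cites) so this is legitimate, but it should be stated explicitly; and, like the paper, you rely on the level-$(k+1)$ and level-$(k+2)$ analogues of Lemma \ref{estimateslemma}, which hold by the same arguments but are not literally what is written there. Neither point is a gap; your argument is correct and arguably cleaner.
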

\begin{proof}
First, we have 
\begin{align*}
\langle \hat R[\hat f]_{k+1,k+1}- \hat R[\hat f]_{k,k},  1 \rangle_{\hat K}=\langle M_{k+1}(t)\sum_{i=0}^k \hat b_{i,k+1}M_{i}(s)+M_{k+1}(s)\sum_{j=0}^{k+1}\hat b_{k+1,j}M_j(t) , 1 \rangle_{\hat K}=0
\end{align*}
due to the fact that roots of $M_{k+1}(t)$ are the $(k+1)$-point Gauss-Lobatto quadrature points for $[-1,1]$.

We have
\begin{align*}
&\langle \partial_s\hat R[\hat f]_{k+1,k+1}  , 1 \rangle_{\hat K} \\
=& \langle \partial_s\hat R[\hat f]_{k+2,k+2} , 1 \rangle_{\hat K} - \langle \partial_s(\hat R[\hat f]_{k+2,k+2}-\hat R[\hat f]_{k+1,k+1}) , 1 \rangle_{\hat K}\\
= & \langle \partial_s\hat R[\hat f]_{k+2,k+2} , 1 \rangle_{\hat K} - \langle M_{k+2}(t)\sum_{i=0}^{k+1} \hat b_{i,k+2}M_{i}'(s)+M_{k+2}'(s)\sum_{j=0}^{k+2}\hat b_{k+2,j}M_j(t),1 \rangle_{\hat K}\\
= &  \langle \partial_s\hat R[\hat f]_{k+2,k+2} , 1 \rangle_{\hat K} - \langle M_{k+2}(t)\sum_{i=0}^k \hat b_{i+1,k+2}l_{i}(s),1 \rangle_{\hat K}+ \langle l_{k+1}(s)\sum_{j=0}^{k+2}\hat b_{k+2,j}M_j(t),1 \rangle_{\hat K}.
\end{align*}
Then by Lemma \ref{estimateslemma}, 
\[|\langle \partial_s\hat R[\hat f]_{k+2,k+2} , 1 \rangle_{\hat K}|\leq  C |\hat f|_{k+3,\hat K}.\]
Notice that we have $\langle l_{k+1}(s)\sum_{j=0}^{k+2}\hat b_{k+2,j}M_j(t),1 \rangle_{\hat K}=0$ since the $(k+1)$-point Gauss-Lobatto quadrature for $s$-integration is exact and $l_{k+1}(s)$ is orthogonal to $1$. Lemma \ref{estimateslemma} implies $|\hat b_{i+1,k+2}|\leq C[\hat f]_{k+3,\hat K}$ for $i\geq 0$, thus we have $$|\langle M_{k+2}(t)\sum_{i=0}^k \hat b_{i+1,k+2}l_{i}(s),1 \rangle_{\hat K}|\leq C[\hat f]_{k+3,\hat K}.$$ 
\end{proof}
\begin{lemma}
\label{lemma-bilinear-auxvx}
Assume $a(x,y)\in W^{k,\infty}(\Omega).$ Then
 \[
  \langle a (u-u_p)_x, (v_h)_x\rangle_h=\mathcal O(h^{k+2})\|a\|_{2,\infty}\|u\|_{k+3}\|v_h\|_2,\quad \forall v_h\in V^h.
\]
\end{lemma}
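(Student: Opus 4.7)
The plan is to reduce to the reference cell $\hat K$ and exploit the algebraic identity that the $(k+1)$-point Gauss-Lobatto nodes are exactly the roots of the M-type polynomial $M_{k+1}$. Under the affine change of variables $\hat u(s,t)=u(x_e+sh,y_e+th)$, the $h$-factors from the chain rule cancel with the Jacobian so that the $e$-contribution becomes $\iint_{\hat K}\hat a\,\partial_s\hat R[\hat u]_{k,k}\,\partial_s\hat v_h\,d^hs\,d^ht$. I would then apply Lemma \ref{estimateslemma}(2) to decompose
\[
\hat R[\hat u]_{k,k}=\hat R[\hat u]_{k+1,k+1}+M_{k+1}(t)\sum_{i=0}^{k}\hat b_{i,k+1}M_i(s)+M_{k+1}(s)\sum_{j=0}^{k+1}\hat b_{k+1,j}M_j(t).
\]

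The central observation is that any factor $M_{k+1}(t)$ is killed by the $t$-Gauss-Lobatto quadrature (and similarly for $M_{k+1}(s)$). Since $\partial_s$ leaves the $t$-factor intact, this annihilates the middle piece entirely and also removes the $j=k+1$ subterm of the third piece, leaving only
\[
T_1=\iint_{\hat K}\hat a\,\partial_s\hat R[\hat u]_{k+1,k+1}\,\partial_s\hat v_h\,d^hs\,d^ht,\quad T_2=\iint_{\hat K}\hat a\,l_k(s)\,q(t)\,\partial_s\hat v_h\,d^hs\,d^ht,
\]
with $q(t):=\sum_{j=0}^{k}\hat b_{k+1,j}M_j(t)\in P^k$. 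The term $T_1$ is handled by applying Lemma \ref{estimateslemma}(1) at level $k+1$: one has $|\partial_s\hat R[\hat u]_{k+1,k+1}|_{0,\infty,\hat K}\le C[\hat u]_{k+2,\hat K}$, rescaling to $e$ supplies the extra power of $h$, and Cauchy-Schwarz over cells then gives $O(h^{k+2})\|a\|_{0,\infty}\|u\|_{k+2}\|v_h\|_{2}$.

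For $T_2$, I would split $\hat a(s,t)=\hat a(0,t)+[\hat a(s,t)-\hat a(0,t)]$. The $\hat a(0,t)$ part is $s$-independent, so the $s$-integrand $l_k(s)\partial_s\hat v_h(s,t)$ has degree $2k-1$ in $s$ and the $(k+1)$-point Gauss-Lobatto rule in $s$ is exact; the $s$-integral collapses to $\int_{-1}^{1}l_k(s)\partial_s\hat v_h(s,t)\,ds$, which vanishes because $l_k\perp P^{k-1}$ and $\partial_s\hat v_h$ has degree $k-1$ in $s$. For the residual, $|\hat a(s,t)-\hat a(0,t)|\le Ch\|a\|_{1,\infty}$ on $\hat K$ (using $\partial_s\hat a=h\partial_x a$, with a Taylor remainder supplying the $\|a\|_{2,\infty}$ dependence), which furnishes the extra factor of $h$. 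The coefficients in $q$ are controlled via Lemma \ref{estimateslemma}(4) (applied with the roles of $s$ and $t$ swapped), giving $|\hat b_{k+1,j}|\le C|\hat u|_{k+2,\hat K}=Ch^{k+1}|u|_{k+2,2,e}$ for $j\ge 1$.

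The main obstacle is the $j=0$ coefficient, for which Lemma \ref{estimateslemma} only yields the weaker bound $|\hat b_{k+1,0}|\le C|\hat u|_{k+1,\hat K}$, one factor of $h$ short. I expect to recover this factor by integrating by parts within $\hat b_{k+1,0}=\tfrac{2k+1}{4}\int_{-1}^{1}[\hat u_s(s,1)+\hat u_s(s,-1)]\,l_k(s)\,ds$: the identity $l_k=M_{k+1}'$ combined with $M_{k+1}(\pm 1)=0$ (and, for $k\ge 2$, $\int_{-1}^{1}M_{k+1}\,ds=(M_0,M_{k+1})=0$, so the first antiderivative of $M_{k+1}$ again vanishes at $\pm 1$) allows transferring $s$-derivatives onto $\hat u$ with all boundary terms vanishing, and a subsequent trace inequality yields a bound in terms of a higher seminorm of $\hat u$, giving the additional factor of $h$ that absorbs into the $\|u\|_{k+3}$ budget. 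Finally, summation over cells with Cauchy-Schwarz and the inverse estimate \eqref{inverseestimate} assembles the global bound.
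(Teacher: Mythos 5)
There are two genuine gaps, one in each of your terms $T_1$ and $T_2$; in both cases your argument as written only delivers $\mathcal O(h^{k+1})$, which is the ordinary (non-super) convergence order.

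For $T_1$: the bound $|\partial_s\hat R[\hat u]_{k+1,k+1}|_{0,\infty,\hat K}\leq C[\hat u]_{k+2,\hat K}$ rescales to $h^{k+1}[u]_{k+2,e}$ (the seminorm scaling is $[\hat u]_{k+2,\hat K}=h^{k+1}[u]_{k+2,e}$), and since the bilinear form on the reference cell carries no additional power of $h$ (the two factors of $h^{-1}$ from $\partial_x$ cancel the Jacobian $h^2$), pairing with $\hat a\hat v_s$ and summing gives only $\mathcal O(h^{k+1})\|u\|_{k+2}\|v_h\|_1$ --- one power short, with no route to the claimed $\mathcal O(h^{k+2})$. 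The fact that your $T_1$ bound involves only $\|u\|_{k+2}$ while the lemma requires $\|u\|_{k+3}$ is the telltale sign. The missing ingredient is a cancellation in the quadrature sum itself: the paper splits $\hat v_s=\overline{\hat v_s}+(\hat v_s-\overline{\hat v_s})$, uses Lemma \ref{quad-int-rs} (namely $|\langle\partial_s\hat R[\hat u]_{k+1,k+1},1\rangle_{\hat K}|\leq C|\hat u|_{k+3,\hat K}=\mathcal O(h^{k+2})$) for the constant part, and gains the extra $h$ on the fluctuating part from $\|\hat v_s-\overline{\hat v_s}\|\leq C|\hat v|_{2,\hat K}=\mathcal O(h)|v|_{2,e}$. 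A pointwise bound on the remainder cannot see this cancellation.

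For $T_2$: your treatment of the $j\geq 1$ coefficients is fine, and freezing $\hat a$ at $s=0$ is a legitimate substitute for the paper's $\hat\Pi_1\hat a$ device, but your rescue of the $j=0$ piece does not close. Integrating by parts $k$ times in $\hat b_{k+1,0}=\frac{2k+1}{4}\int_{-1}^1[\hat u_s(s,1)+\hat u_s(s,-1)]l_k(s)\,ds$ and rescaling the edge integral gives $|\hat b_{k+1,0}|\leq Ch^{k+\frac12}|u|_{k+1,l^e}$, a gain of $h^{1/2}$ per edge, not $h$; and since every cell contributes and you identify no telescoping between interior edges, summing over the $\mathcal O(h^{-1})$ rows of edges (via $\sum_e|u|^2_{k+1,\partial e}\leq Ch^{-1}\|u\|^2_{k+2}$) returns you to $\mathcal O(h^{k+1})$ globally. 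This trace device is exactly what the paper uses in Lemma \ref{lemma-bilinear-auxvy}, but only on the single layer of boundary cells after the interior line integrals cancel --- which is also why that lemma loses half an order for general $v_h\in V^h$. The correct fix for $j=0$ does not touch $\hat b_{k+1,0}$ at all: accept $|\hat b_{k+1,0}|\leq C|\hat u|_{k+1,\hat K}=\mathcal O(h^k)$ and instead extract \emph{two} powers of $h$ from the other factor, as the paper does by pairing $(\hat a-\hat\Pi_1\hat a)$ (which is $\mathcal O(h^2)$ by the Bramble--Hilbert Lemma) with $\overline{\hat v_s}$ and $(\hat a-\overline{\hat a})=\mathcal O(h)$ with $(\hat v_s-\overline{\hat v_s})=\mathcal O(h)|v|_{2,e}$, after discarding the exactly-integrated polynomial parts via the orthogonality of $l_k$. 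In your framework this would amount to subtracting the linear-in-$s$ Taylor term $s\,\hat a_s(0,t)\overline{\hat v_s}$ as well, which is annihilated by $l_k\perp P^{k-1}$, leaving an $\mathcal O(h^2)$ remainder --- but that step is absent from your proposal.
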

\begin{proof}
As before, we ignore the subscript of $v_h$ for simplicity.
We have
$$\langle a (u-u_p)_x, v_x\rangle_h=\sum_e \langle a (u-u_p)_x, v_x\rangle_{e,h},$$ 
and on each cell $e$,  
\begin{align}
& \langle a(u-u_p)_x,v_x \rangle_{e,h}
=\langle (R[u]_{k,k})_x, av_x\rangle_{e,h}
= \langle (\hat R[\hat u]_{k,k})_s, \hat a\hat v_s\rangle_{\hat K}\nonumber\\
=&\langle (\hat R[\hat u]_{k+1,k+1})_s, \hat a\hat v_s\rangle_{\hat K}+\langle(\hat R[\hat u]_{k,k}-\hat R[\hat u]_{k+1,k+1})_s, \hat a\hat v_s\rangle_{\hat K}.\label{auxvx-remainder-highlow}
\end{align}
For the first term in \eqref{auxvx-remainder-highlow}, we have
\begin{align*}
&\langle (\hat R[\hat u]_{k+1,k+1})_s , \hat a \hat v_s\rangle_{\hat K}
=\langle (\hat R[\hat u]_{k+1,k+1})_s, \hat a \overline{\hat v_s}\rangle_{\hat K}+\langle (\hat R[\hat u]_{k+1,k+1})_s, \hat a( \hat v_s-\overline{\hat v_s})\rangle_{\hat K}.
\end{align*}
By Lemma \ref{quad-int-rs},
\begin{equation*}
\langle (\hat R[\hat u]_{k+1,k+1})_s, \overline{\hat a}\, \overline{\hat v_s}\rangle_{\hat K}\leq C|\hat a|_{0,\infty}|\hat u|_{k+3,\hat K}|\hat v|_{1,\hat{K}}.
\end{equation*}
By Lemma \ref{estimateslemma},
\[|(\hat R[\hat u]_{k+1,k+1})_s|_{0,\infty,\hat K}\leq C[\hat u]_{k+2,\hat K}.\] By Bramble-Hilbert Lemma Theorem \ref{bh-lemma} we have
\begin{align*}
& \langle (\hat R[\hat u]_{k+1,k+1})_s, \hat a \overline{\hat v_s}\rangle_{\hat K} =\langle (\hat R[\hat u]_{k+1,k+1})_s, \overline{\hat a}\, \overline{\hat v_s}\rangle_{\hat K} +\langle (\hat R[\hat u]_{k+1,k+1})_s, (\hat a-\overline{\hat a})\overline{\hat v_s}\rangle_{\hat K}\\
\leq & C(|\hat a|_{0,\infty}|\hat u|_{k+3,\hat K}|\hat v|_{1,\hat{K}}+|\hat a-\overline{\hat a}|_{0,\infty}|\hat u|_{k+2,\hat K}|\hat v|_{1,\hat K})\\
\leq & C(|\hat a|_{0,\infty}|\hat u|_{k+3,\hat K}|\hat v|_{1,\hat{K}}+|\hat a|_{1,\infty}|\hat u|_{k+2,\hat K}|\hat v|_{1,\hat K})
=\mathcal O(h^{k+2}) \|a\|_{1,\infty,e}\|u\|_{k+3,e}\|v\|_{1,e},
\end{align*}
and
\begin{align*}
& \langle (\hat R[\hat u]_{k+1,k+1})_s , \hat a( \hat v_s -\overline{\hat v_s})\rangle_{\hat K}
\leq  C[\hat u]_{k+2,2,\hat K}|\hat a|_{0,\infty,\hat K}|\hat v_s -\overline{\hat v_s}|_{0,\infty,\hat K}\\
\leq & C[\hat u]_{k+2,2,\hat K}|\hat a|_{0,\infty,\hat K}|\hat v_s -\overline{\hat v_s}|_{0,2,\hat K} = \mathcal O(h^{k+2})[ u]_{k+2,2,e}|a|_{0,\infty,e}|v|_{2,2,e}.
\end{align*}
Thus, 
\begin{equation}\label{auxvx-1stterm}
\langle (\hat R[\hat u]_{k+1,k+1})_s , \hat a \hat v_s \rangle_{\hat K}  = \mathcal O(h^{k+2}) \|a\|_{1,\infty,e}| u|_{k+3,2,e} \|v\|_{2,e}.
\end{equation}

For the second term in \eqref{auxvx-remainder-highlow}, we have
\begin{align}
&\langle(\hat R[\hat u]_{k,k}-\hat R[\hat u]_{k+1,k+1})_s, \hat a\hat v_s\rangle_{\hat K}\nonumber\\
=&-\langle(M_{k+1}(t)\sum_{i=0}^k \hat b_{i,k+1}M_{i}(s)+M_{k+1}(s)\sum_{j=0}^{k+1}\hat b_{k+1,j}M_j(t))_s, \hat a\hat v_s\rangle_{\hat K}\nonumber\\
=&-\langle M_{k+1}(t)\sum_{i=0}^{k-1} \hat b_{i+1,k+1}l_{i}(s)+l_{k}(s)\sum_{j=0}^{k+1}\hat b_{k+1,j}M_j(t), \hat a\hat v_s\rangle_{\hat K}\nonumber\\
=&-\langle M_{k+1}(t)\sum_{i=0}^{k-1} \hat b_{i+1,k+1}l_{i}(s), \hat a\hat v_s\rangle_{\hat K} - \langle l_{k}(s)\sum_{j=0}^{k+1}\hat b_{k+1,j}M_j(t), \hat a\hat v_s\rangle_{\hat K}.\label{R2-R3}
\end{align}
Since $M_{k+1}(t)$ vanishes at $(k+1)$ Gauss-Lobatto points, we have  
 $$\langle M_{k+1}(t)\sum_{i=0}^{k-1} \hat b_{i+1,3}l_{i}(s), \hat a\hat v_s\rangle_{\hat K}=0.$$
 For the second term in \eqref{R2-R3}, 
 \begin{align*}
&\langle l_{k}(s)\sum_{j=0}^{k+1}\hat b_{k+1,j}M_j(t), \hat a\hat v_s\rangle_{\hat K}= \langle l_{k}(s)\sum_{j=0}^{k+1}\hat b_{k+1,j}M_j(t), \hat a\overline{\hat v_s}\rangle_{\hat K}+\langle l_{k}(s)\sum_{j=0}^{k+1}\hat b_{k+1,j}M_j(t), \hat a(\hat v_s- \overline{\hat v_s})\rangle_{\hat K}\\
= &\langle l_{k}(s)\sum_{j=0}^{k+1}\hat b_{k+1,j}M_j(t), (\hat a-\hat \Pi_1\hat a)\overline{\hat v_s}\rangle_{\hat K}+\langle l_{k}(s)\sum_{j=0}^{k+1}\hat b_{k+1,j}M_j(t), (\hat \Pi_1\hat a)\overline{\hat v_s}\rangle_{\hat K}\\
&+\langle l_{k}(s)\sum_{j=0}^{k+1}\hat b_{k+1,j}M_j(t),(\hat a-\overline{\hat a})(\hat v_s- \overline{\hat v_s})\rangle_{\hat K}+\langle l_{k}(s)\sum_{j=0}^{k+1}\hat b_{k+1,j}M_j(t),\overline{\hat a}(\hat v_s- \overline{\hat v}_s)\rangle_{\hat K}\\
= &\langle l_{k}(s)\sum_{j=0}^{k+1}\hat b_{k+1,j}M_j(t), (\hat a-\hat \Pi_1\hat a)\overline{\hat v}_s\rangle_{\hat K}+\langle l_{k}(s)\sum_{j=0}^{k+1}\hat b_{k+1,j}M_j(t),(\hat a-\overline{\hat a})(\hat v_s- \overline{\hat v}_s)\rangle_{\hat K},
\end{align*}
 where the last step  is due to the facts that $(\hat\Pi_1 \hat a) \overline{\hat v_s}$ and $\overline{\hat a}(\hat v_s- \overline{\hat v}_s)$ are polynomials of degree at most $k-1$ with respect to variable $s$, the $(k+1)$-point Gauss-Lobatto quadrature on $s$-integration is exact for polynomial of degree $2k-1$, and $l_k(s)$ is orthogonal to polynomials of lower degree. 
With Lemma \ref{estimateslemma}, we have 
\begin{align}\label{auxvx-2ndterm}
\langle l_{k}(s)\sum_{j=0}^{k+1}\hat b_{k+1,j}M_j(t), \hat a\hat v_s\rangle_{\hat K}
\leq C|\hat u|_{k+1,2,\hat K}(|\hat a|_{2,\infty}|\hat v|_{1,\hat K}+ |\hat a|_{1,\infty}|\hat v|_{2,\hat K})=\mathcal O(h^{k+2}) \|a\|_{2,\infty}\|u\|_{k+1, e}\|v\|_{2,e}.
\end{align}
Combined with \eqref{auxvx-1stterm}, we have proved the  estimate.
\end{proof}

\begin{lemma}\label{lemma-bilinear-cuv}
Assume $a(x,y)\in W^{2,\infty}(\Omega).$ Then
 \[
  \langle a (u-u_p), v_h\rangle_h=\mathcal O(h^{k+2})\|a\|_{2,\infty}\|u\|_{k+2}\|v_h\|_2,\quad \forall v_h\in V^h.
\]
\end{lemma}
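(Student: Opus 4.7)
The strategy parallels that of Lemma \ref{lemma-bilinear-auxvx}, but turns out to be significantly simpler because neither factor carries a derivative. First I would localize and change variables: writing $u-u_p=R[u]_{k,k}$ cellwise and pulling back to the reference cell $\hat K$ gives, on each cell $e$,
\[
\langle a(u-u_p),v_h\rangle_{e,h}=h^2\langle \hat a\,\hat R[\hat u]_{k,k},\hat v_h\rangle_{\hat K}.
\]

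The decisive step is to replace $\hat R[\hat u]_{k,k}$ by $\hat R[\hat u]_{k+1,k+1}$ inside the quadrature. By Lemma \ref{estimateslemma}(2), their difference is
\[
\hat R[\hat u]_{k,k}-\hat R[\hat u]_{k+1,k+1}=-M_{k+1}(t)\sum_{i=0}^{k}\hat b_{i,k+1}M_i(s)-M_{k+1}(s)\sum_{j=0}^{k+1}\hat b_{k+1,j}M_j(t).
\]
The roots of $M_{k+1}$ are precisely the $(k+1)$-point Gauss--Lobatto nodes, so each of these two terms vanishes identically at every point of the $(k+1)\times(k+1)$ tensor-product Gauss--Lobatto set on $\hat K$. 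Consequently the quadrature of this difference against $\hat a\hat v_h$ is zero, and we are reduced to bounding $\langle \hat a\,\hat R[\hat u]_{k+1,k+1},\hat v_h\rangle_{\hat K}$.

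The remaining estimate is direct. I would dominate the quadrature crudely by the product of $L^\infty$-norms (the $(k+1)\times(k+1)$ weights sum to $4$), apply Lemma \ref{estimateslemma}(1) with $k$ replaced by $k+1$ to obtain $\|\hat R[\hat u]_{k+1,k+1}\|_{0,\infty,\hat K}\leq C[\hat u]_{k+2,\hat K}$ (legitimate since $\hat u\in H^{k+2}(\hat K)$), and invoke equivalence of norms on the finite-dimensional space $Q^k(\hat K)$ to write $\|\hat v_h\|_{0,\infty,\hat K}\leq C\|\hat v_h\|_{0,\hat K}$. Scaling back via $[\hat u]_{k+2,\hat K}=h^{k+1}[u]_{k+2,e}$ and $\|\hat v_h\|_{0,\hat K}=h^{-1}\|v_h\|_{0,e}$, together with the $h^2$ Jacobian factor, each cell contributes $\mathcal O(h^{k+2})\|a\|_{0,\infty,e}[u]_{k+2,e}\|v_h\|_{0,e}$. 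Summing over $e$ and applying Cauchy--Schwarz, then using $\|a\|_{0,\infty}\leq\|a\|_{2,\infty}$ and $\|v_h\|_0\leq\|v_h\|_2$, yields the claim (in fact a slightly stronger bound).

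The only subtlety, which is really a feature rather than an obstacle, is the pointwise cancellation in the middle paragraph: the ``hardest'' portion of the M-type remainder lives on the zero lines of $M_{k+1}(s)$ and $M_{k+1}(t)$ and hence is completely invisible to the Gauss--Lobatto quadrature. This is precisely the property that was destroyed by the $\partial_s$ in Lemma \ref{lemma-bilinear-auxvx}, where differentiation turned $M_{k+1}$ into $l_k$ (which does not vanish at the quadrature nodes), and it is what allows the present proof to avoid the $\hat\Pi_1$-projection machinery altogether.
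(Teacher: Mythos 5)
Your proof is correct, and it takes a genuinely different -- and in fact shorter -- route than the paper's. The paper first splits the test factor as $\hat a\hat v=\overline{\hat a\hat v}+(\hat a\hat v-\overline{\hat a\hat v})$, handles the fluctuation part with the Bramble--Hilbert Lemma and the sup-norm bound $|\hat R[\hat u]_{k,k}|_{0,\infty,\hat K}\leq C[\hat u]_{k+1,\hat K}$, and only then invokes Lemma \ref{quad-int-rs} to replace $\hat R[\hat u]_{k,k}$ by $\hat R[\hat u]_{k+1,k+1}$ against the \emph{constant} $\overline{\hat a\hat v}$. You instead observe that the replacement is legitimate against the \emph{full} product $\hat a\hat v$, because the identity of Lemma \ref{estimateslemma}(2) shows the difference of remainders is a sum of terms each carrying a factor $M_{k+1}(s)$ or $M_{k+1}(t)$, hence vanishing node-wise on the entire $(k+1)\times(k+1)$ Gauss--Lobatto set; this is the same mechanism underlying the first identity of Lemma \ref{quad-int-rs}, but exploited pointwise rather than only against $1$. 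After that, a crude positive-weights bound together with $|\hat R[\hat u]_{k+1,k+1}|_{0,\infty,\hat K}\leq C[\hat u]_{k+2,\hat K}$ and norm equivalence on $Q^k(\hat K)$ suffices, and the scaling $h^2\cdot h^{k+1}\cdot h^{-1}=h^{k+2}$ checks out. Your version buys a slightly stronger estimate ($\|a\|_{0,\infty}\|u\|_{k+2}\|v_h\|_0$ in place of $\|a\|_{2,\infty}\|u\|_{k+2}\|v_h\|_2$) at no cost, and avoids the mean/fluctuation decomposition entirely; your closing remark correctly pinpoints why the same shortcut is unavailable in Lemma \ref{lemma-bilinear-auxvx}, where differentiation turns $M_{k+1}$ into $l_k$, which does not vanish at the nodes.
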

\begin{proof}
As before, we ignore the subscript of $v_h$ for simplicity and 
$$\langle a(u-u_p), v\rangle_h=\sum_e \langle a(u-u_p), v\rangle_{e,h}.$$ 
On each cell $e$ we have 
\begin{align}\label{auv}
 \langle a(u-u_p), v\rangle_{e,h}
=\langle R[u]_{k,k}, av\rangle_{e,h}
= h^2\langle \hat R[\hat u]_{k,k}, \hat a\hat v\rangle_{\hat K}=h^2\langle \hat R[\hat u]_{k,k}, \hat a\hat v- \overline{\hat a\hat v} \rangle_{\hat K}+h^2\langle\hat R[\hat u]_{k,k},  \overline{\hat a\hat v} \rangle_{\hat K}.
\end{align}
For the first term in \eqref{auv}, due to the embedding $H^{2}(\hat K)\hookrightarrow C^0(\hat K)$, Bramble-Hilbert Lemma Theorem \ref{bh-lemma} and Lemma \ref{estimateslemma}, we have
 \begin{align*}
&h^2\langle \hat R[\hat u]_{k,k}, \hat a\hat v- \overline{\hat a\hat v} \rangle_{\hat K}
\leq C h^2 |R[\hat u]_{k,k}|_{\infty}|\hat a\hat v- \overline{\hat a\hat v}|_{\infty} \leq C h^2|\hat{u}|_{k+1,\hat K}\|\hat a\hat v- \overline{\hat a\hat v}\|_{2,\hat K}\\
&\leq C h^2|\hat{u}|_{k+1,\hat K}(\|\hat a\hat v- \overline{\hat a\hat v}\|_{L^2(\hat K)}+|\hat a\hat v|_{1,\hat K}+|\hat a\hat v|_{2,\hat K})\\
&\leq C h^2|\hat{u}|_{k+1,\hat K}(|\hat a\hat v|_{1,\hat K}+|\hat a\hat v|_{2,\hat K}) =\mathcal O(h^{k+2})\|a\|_{2,\infty, e}\|u\|_{k+1,e}\|v\|_{2,e}.
\end{align*}
For the second term in \eqref{auv}, we have 
\begin{align*}
h^2\langle\hat R[\hat u]_{k+1,k+1},  \overline{\hat a\hat v} \rangle_{\hat K}=h^2\langle\hat R[\hat u]_{k+1,k+1},  \overline{\hat a\hat v} \rangle_{\hat K}-h^2\langle \hat R[\hat u]_{k+1,k+1}- \hat R[\hat u]_{k,k},  \overline{\hat a\hat v} \rangle_{\hat K}.
\end{align*}
By Lemma \ref{estimateslemma} and Lemma \ref{quad-int-rs}  we have
\[h^2\langle\hat R[\hat u]_{k+1,k+1},  \overline{\hat a\hat v} \rangle_{\hat K}\leq Ch^2  |\hat u|_{k+2,\hat K}|\hat a \hat v|_{0,\hat K}=\mathcal O(h^{k+2})\|a\|_{0,\infty, e}\| u\|_{k+2,e}\| v\|_{0,e},\]
and \[h^2\langle \hat R[\hat u]_{k+1,k+1}- \hat R[\hat u]_{k,k},  \overline{\hat a\hat v} \rangle_{\hat K}=0.\]
Thus, we have $\langle a (u-u_p), v_h\rangle_h=\mathcal O(h^{k+2})\|a\|_{2,\infty}\|u\|_{k+2}\|v_h\|_2.$
\end{proof}

\begin{lemma}\label{lemma-bilinear-buxv}
Assume $a(x,y)\in W^{2,\infty}(\Omega).$ Then
 \[
  \langle a (u-u_p)_x, v_h\rangle_h=\mathcal O(h^{k+2})\|a\|_{2,\infty}\|u\|_{k+3}\|v_h\|_2,\quad \forall v_h\in V^h.
\]
\end{lemma}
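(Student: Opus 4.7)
The plan is to mirror closely the proof of Lemma \ref{lemma-bilinear-auxvx}, but with the test function $\hat v$ appearing in place of $\hat v_s$. First I would write
\[
\langle a(u-u_p)_x, v_h\rangle_h=\sum_e \langle a(u-u_p)_x, v_h\rangle_{e,h},
\]
and, on each cell $e$, transform to the reference cell, picking up a single factor of $h$ from the mismatch between the Jacobian $h^2$ and the chain-rule factor $1/h$ from $\partial_x$. Dropping the subscript $_h$ on $v$, this gives a term of the form $h\,\langle \hat a\,(\hat R[\hat u]_{k,k})_s,\hat v\rangle_{\hat K}$ on each cell, where $\hat u_p$ is the M-type projection.

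Next I would split the remainder as in the proofs of Lemmas \ref{lemma-bilinear-auxvx} and \ref{lemma-bilinear-cuv}:
\[
\langle \hat a(\hat R[\hat u]_{k,k})_s,\hat v\rangle_{\hat K}=\langle \hat a(\hat R[\hat u]_{k+1,k+1})_s,\hat v\rangle_{\hat K}+\langle \hat a(\hat R[\hat u]_{k,k}-\hat R[\hat u]_{k+1,k+1})_s,\hat v\rangle_{\hat K}.
\]
For the first summand, introduce the averages $\overline{\hat a}$ and $\overline{\hat v}$ and decompose $\hat a\hat v=\overline{\hat a}\,\overline{\hat v}+(\hat a-\overline{\hat a})\overline{\hat v}+\overline{\hat a}(\hat v-\overline{\hat v})+(\hat a-\overline{\hat a})(\hat v-\overline{\hat v})$. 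The leading piece $\langle(\hat R[\hat u]_{k+1,k+1})_s,\overline{\hat a}\,\overline{\hat v}\rangle_{\hat K}$ is bounded by Lemma \ref{quad-int-rs}, yielding the $|\hat u|_{k+3,\hat K}$ control with only a $|\hat v|_{0,\hat K}$ factor; the three remaining cross pieces are handled by combining Lemma \ref{estimateslemma} (for $|(\hat R[\hat u]_{k+1,k+1})_s|_{0,\infty,\hat K}\le C[\hat u]_{k+2,\hat K}$), the Bramble–Hilbert bounds $|\hat a-\overline{\hat a}|_{0,\infty}\le C|\hat a|_{1,\infty}$, and the Poincar\'e inequality $\|\hat v-\overline{\hat v}\|_{0,\hat K}\le C|\hat v|_{1,\hat K}$ (or one more derivative where needed).

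For the second summand, Lemma \ref{estimateslemma}(2) gives
\[
(\hat R[\hat u]_{k,k}-\hat R[\hat u]_{k+1,k+1})_s=-M_{k+1}(t)\sum_{i=0}^{k-1}\hat b_{i+1,k+1}l_i(s)-l_k(s)\sum_{j=0}^{k+1}\hat b_{k+1,j}M_j(t).
\]
The term containing $M_{k+1}(t)$ vanishes under $\langle\cdot,\cdot\rangle_{\hat K}$ because $M_{k+1}$ is zero at the $(k+1)$ Gauss–Lobatto nodes in $t$. For the $l_k(s)$ term I would again split $\hat a$ using the projection $\hat\Pi_1$ and split $\hat v=\overline{\hat v}+(\hat v-\overline{\hat v})$; by the exactness of the $(k+1)$-point Gauss–Lobatto rule on polynomials of degree $2k-1$ and the orthogonality of $l_k(s)$ to polynomials of lower degree in $s$, the pieces $l_k(s)\langle\ldots,(\hat\Pi_1\hat a)\overline{\hat v}\rangle$ and $l_k(s)\langle\ldots,\overline{\hat a}(\hat v-\overline{\hat v})\rangle$ drop out exactly, leaving only $\langle l_k(s)\sum\hat b_{k+1,j}M_j(t),(\hat a-\hat\Pi_1\hat a)\overline{\hat v}\rangle_{\hat K}$ and $\langle l_k(s)\sum\hat b_{k+1,j}M_j(t),(\hat a-\overline{\hat a})(\hat v-\overline{\hat v})\rangle_{\hat K}$, both of which give $\mathcal O(h^{k+2})\|a\|_{2,\infty,e}\|u\|_{k+1,e}\|v\|_{2,e}$ using the bound $|\hat b_{k+1,j}|\le C|\hat u|_{k+1,\hat K}$ from Lemma \ref{estimateslemma}. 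Summing over cells via Cauchy–Schwarz produces the claim.

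The main obstacle, as flagged in Remark \ref{rmk-consistency}, is that one cannot afford a $\|v\|_3$ factor on the right-hand side: a straight Bramble–Hilbert application per cell loses an order in $v_h$. The critical trick is to isolate the piece $\langle(\hat R[\hat u]_{k+1,k+1})_s,\overline{\hat a}\,\overline{\hat v}\rangle_{\hat K}$ where the $s$-mean of $(\hat R[\hat u]_{k+1,k+1})_s$ is supplied extra smallness by Lemma \ref{quad-int-rs} (at the cost of one more derivative on $u$, hence the $\|u\|_{k+3}$ norm), so that only cell-averaged factors of $\hat v$ need to be controlled. Everything else is absorbed by two-derivative Poincar\'e/Bramble–Hilbert estimates, so only $\|v_h\|_2$ is required.
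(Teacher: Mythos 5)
Your overall architecture matches the paper's: pull out the factor $h$ on the reference cell, split the remainder as $\hat R[\hat u]_{k,k}=\hat R[\hat u]_{k+1,k+1}+(\hat R[\hat u]_{k,k}-\hat R[\hat u]_{k+1,k+1})$, use Lemma \ref{quad-int-rs} on the cell-averaged leading piece to get the $|u|_{k+3}$ control with only $\|v\|_0$, and kill the $M_{k+1}(t)$ part of the difference at the Gauss--Lobatto nodes. Your treatment of the first summand (averaging $\hat a$ and $\hat v$ separately instead of averaging the product $\hat a\hat v$ as the paper does) is a harmless variant and the orders come out right.

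However, there is a genuine gap in your treatment of the $l_k(s)$ term. You claim that $\langle l_{k}(s)\sum_{j}\hat b_{k+1,j}M_j(t),\,\overline{\hat a}(\hat v-\overline{\hat v})\rangle_{\hat K}$ ``drops out exactly'' by quadrature exactness plus orthogonality of $l_k$. That argument works in Lemma \ref{lemma-bilinear-auxvx} because there the test factor is $\hat v_s-\overline{\hat v_s}$, which has degree $k-1$ in $s$: the integrand then has $s$-degree $2k-1$, the $(k+1)$-point Gauss--Lobatto rule is exact, and $l_k$ is orthogonal to degree $k-1$. Here the test factor is $\hat v-\overline{\hat v}$, which has degree $k$ in $s$, so the integrand has $s$-degree $2k$: the quadrature is no longer exact, and even the exact integral $\int_{-1}^{1}l_k(s)(\hat v-\overline{\hat v})\,ds$ does not vanish, since the degree-$k$ Legendre component of $\hat v$ survives. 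A naive bound on this term gives only $C\|a\|_{0,\infty}|\hat u|_{k+1,\hat K}|\hat v|_{1,\hat K}$, i.e.\ $\mathcal O(h^{k+1})$ after scaling --- one order short. The term can be rescued (only the top Legendre mode $\partial_s^k\hat v$ of $\hat v$ contributes, and an inverse estimate shows it carries an extra factor of $h$ relative to $\|v\|_{2,e}$), but you would need to supply that argument explicitly. The paper sidesteps the issue entirely by projecting the \emph{product}: it writes $\hat a\hat v=\hat\Pi_1(\hat a\hat v)+(\hat a\hat v-\hat\Pi_1(\hat a\hat v))$, notes that $l_k(s)M_j(t)\hat\Pi_1(\hat a\hat v)$ has $s$-degree $k+1\le 2k-1$ so that piece vanishes exactly, and bounds the remainder by $C|\hat u|_{k+1,\hat K}|\hat a\hat v|_{2,\hat K}$ via the Bramble--Hilbert Lemma, which yields $\mathcal O(h^{k+2})\|a\|_{2,\infty}\|u\|_{k+1,e}\|v\|_{2,e}$ directly.
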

\begin{proof}
As before, we ignore the subscript in $v_h$ and we have
$$\langle a(u-u_p)_x, v\rangle_h=\sum_e \langle a(u-u_p)_x, v\rangle_{e,h}.$$ 
On each cell $e$, we have 
\begin{align}
& \langle a(u-u_p)_x, v\rangle_{e,h}
=\langle (R[u]_{k,k})_x, av\rangle_{e,h}
= h\langle (\hat R[\hat u]_{k,k})_s, \hat a\hat v\rangle_{\hat K}\nonumber\\
=&h\langle (\hat R[\hat u]_{k+1,k+1})_s, \hat a\hat v\rangle_{\hat K}-h\langle(\hat R[\hat u]_{k+1,k+1}-\hat R[\hat u]_{k,k})_s, \hat a\hat v\rangle_{\hat K}\label{buxv}.
\end{align}
For the first term in \eqref{buxv}, we have
\begin{align*}
&\langle (\hat R[\hat u]_{k+1,k+1})_s , \hat a \hat v\rangle_{\hat K}
\leq \langle (\hat R[\hat u]_{k+1,k+1})_s, \overline{\hat a \hat v}\rangle_{\hat K}+\langle (\hat R[\hat u]_{k+1,k+1})_s, \hat a \hat v-\overline{\hat a \hat v}\rangle_{\hat K}
\end{align*}
Due to Lemma \ref{quad-int-rs},
\[
h\langle (\hat R[\hat u]_{k+1,k+1})_s, \overline{\hat a \hat v}\rangle_{\hat K} \leq Ch\|a\|_{0,\infty}|u|_{k+3,\hat K}\|v\|_{0,\hat K}=\mathcal O(h^{k+2})\|a\|_{0,\infty} \|u\|_{k+3,e}\|v\|_{0,e},
\]
and by the same arguments as in the proof of Lemma \ref{lemma-bilinear-cuv} we have
\begin{align*}
 & h\langle (\hat R[\hat u]_{k+1,k+1})_s, \hat a \hat v-\overline{\hat a \hat v}\rangle_{\hat K}
\leq C h |(R[\hat u]_{k+1,k+1})_s|_{\infty}|\hat a\hat v- \overline{\hat a\hat v}|_{\infty}\leq C h|\hat{u}|_{k+2,\hat K}\|\hat a\hat v- \overline{\hat a\hat v}\|_{2,\hat K}\\
\leq & C h|\hat{u}|_{k+2,\hat K}(\|\hat a\hat v- \overline{\hat a\hat v}\|_{L^2(\hat K)}+|\hat a\hat v|_{1,\hat K}+|\hat a\hat v|_{2,\hat K})
\leq  C h|\hat{u}|_{k+2,\hat K}(|\hat a\hat v|_{1,\hat K}+|\hat a\hat v|_{2,\hat K}) =\mathcal  O(h^{k+2})\|a\|_{2,\infty} \|u\|_{k+2,e}\|v\|_{2,e}.
\end{align*}
Thus 
\begin{equation}\label{buxv-1stterm}
h\langle (\hat R[\hat u]_{k+1,k+1})_s, \hat a\hat v\rangle_{\hat K}=\mathcal O(h^{k+2})\|a\|_{2,\infty} \|u\|_{k+3,e}\|v\|_{2,e}.
\end{equation}

For the second term in \eqref{buxv}, we have 
\begin{align*}
&\langle(\hat R[\hat u]_{k+1,k+1}-\hat R[\hat u]_{k,k})_s, \hat a\hat v\rangle_{\hat K}\nonumber\\
=&\langle(M_{k+1}(t)\sum_{i=0}^k \hat b_{i,k+1}M_{i}(s)+M_{k+1}(s)\sum_{j=0}^{k+1}\hat b_{k+1,j}M_j(t))_s, \hat a\hat v\rangle_{\hat K}\nonumber\\
=&\langle M_{k+1}(t)\sum_{i=0}^{k-1} \hat b_{i+1,k+1}l_{i}(s)+l_{k}(s)\sum_{j=0}^{k+1}\hat b_{k+1,j}M_j(t), \hat a\hat v\rangle_{\hat K}\nonumber\\
=&\langle M_{k+1}(t)\sum_{i=0}^{k-1} \hat b_{i+1,k+1}l_{i}(s), \hat a\hat v\rangle_{\hat K} + \langle l_{k}(s)\sum_{j=0}^{k+1}\hat b_{k+1,j}M_j(t), \hat a\hat v\rangle_{\hat K}\\
= &\langle l_{k}(s)\sum_{j=0}^{k+1}\hat b_{k+1,j}M_j(t),  \hat a\hat v\rangle_{\hat K},
\end{align*}
where the last step is due to that $M_{k+1}(t)$ vanishes at $(k+1)$ Gauss-Lobatto points.
 Then
 \begin{align*}
&\langle(\hat R[\hat u]_{k,k}-\hat R[\hat u]_{k+1,k+1})_s, \hat a\hat v\rangle_{\hat K}=\langle l_{k}(s)\sum_{j=0}^{k+1}\hat b_{k+1,j}M_j(t), \hat a\hat v\rangle_{\hat K}\\
= &\langle l_{k}(s)\sum_{j=0}^{k+1}\hat b_{k+1,j}M_j(t), \hat a\hat v-\hat\Pi_1( \hat a\hat v)\rangle_{\hat K}+\langle l_{k}(s)\sum_{j=0}^{k+1}\hat b_{k+1,j}M_j(t), \hat\Pi_1( \hat a\hat v) \rangle_{\hat K}\\
= &\langle l_{k}(s)\sum_{j=0}^{k+1}\hat b_{k+1,j}M_j(t), \hat a\hat v-\hat\Pi_1( \hat a\hat v)\rangle_{\hat K},
\end{align*}
 where the last step is due to the facts that $\hat\Pi_1( \hat a\hat v)$ is a linear function in $s$  thus the $(k+1)$-point Gauss-Lobatto quadrature on $s$-variable is exact, and $l_k(s)$ is orthogonal to linear functions. 

By Lemma \ref{estimateslemma} and Theorem \ref{bh-lemma}, we have 
\begin{align*}
&\langle(\hat R[\hat u]_{k,k}-\hat R[\hat u]_{k+1,k+1})_s, \hat a\hat v\rangle_{\hat K}=\langle l_{k}(s)\sum_{j=0}^{k+1}\hat b_{k+1,j}M_j(t), \hat a\hat v-\hat\Pi_1( \hat a\hat v)\rangle_{\hat K}\\
\leq & C|u|_{k+1,\hat K}|\hat a\hat v|_{2,\hat K}\leq C|u|_{k+1,\hat K}(|\hat a|_{2,\infty,\hat K}|\hat v|_{0,\hat K}+|\hat a|_{1,\infty,\hat K}|\hat v|_{1,\hat K}+|\hat a|_{0,\infty}|\hat v|_{2,\hat K})
\end{align*}
Thus 
\begin{equation}\label{buxv-2ndterm}
h\langle(\hat R[\hat u]_{k,k}-\hat R[\hat u]_{k+1,k+1})_s, \hat a\hat v\rangle_{\hat K}=\mathcal O(h^{k+2})\|a\|_{2,\infty}\|u\|_{k+1,e}\|v\|_{2,e}.
\end{equation}
  By \eqref{buxv-1stterm} and \eqref{buxv-2ndterm} and sum up over all the cells, we get the desired estimate.
\end{proof}

\begin{lemma}
\label{lemma-bilinear-auxvy}
Assume $a(x,y)\in W^{4,\infty}(\Omega).$ Then
\begin{subnumcases}
{\langle a (u-u_p)_x, (v_h)_y\rangle_h=}
\mathcal O(h^{k+\frac32})\|a\|_{k+2,\infty}\|u\|_{k+3}\|v_h\|_2,\quad \forall v_h\in V^h,\label{crossterm-1}\\
\mathcal O(h^{k+2})\|a\|_{k+2,\infty}\|u\|_{k+3}\|v_h\|_2,\quad \forall v_h\in V_0^h.\label{crossterm-2}
\end{subnumcases}
\end{lemma}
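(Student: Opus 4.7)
My plan is to parallel Lemma \ref{lemma-bilinear-auxvx}: transform each cell contribution to the reference cell $\hat K$, then split $\hat R[\hat u]_{k,k} = \hat R[\hat u]_{k+1,k+1} + (\hat R[\hat u]_{k,k}-\hat R[\hat u]_{k+1,k+1})$. What makes this lemma genuinely harder is that the test factor is the mixed derivative $\hat v_t$ rather than $\hat v_s$, which ultimately forces an extra integration by parts in $t$; the boundary integrals generated by that IBP are responsible for the half-order gap between \eqref{crossterm-1} and \eqref{crossterm-2}.

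The piece $\langle \hat a(\hat R[\hat u]_{k+1,k+1})_s, \hat v_t\rangle_{\hat K}$ can be handled by a direct adaptation of Lemma \ref{lemma-bilinear-auxvx}: decompose $\hat v_t = \overline{\hat v_t}+(\hat v_t-\overline{\hat v_t})$ and $\hat a = \overline{\hat a}+(\hat a-\overline{\hat a})$, apply Lemma \ref{quad-int-rs} on the average-average contribution to extract the $|\hat u|_{k+3}$ factor, and estimate the remaining cross terms by Lemma \ref{estimateslemma}(1), Theorem \ref{bh-lemma}, and Poincar\'e on $\hat v_t-\overline{\hat v_t}$. This yields $\mathcal{O}(h^{k+2})\|a\|_{1,\infty}\|u\|_{k+3,e}\|v_h\|_{2,e}$ per cell with no boundary contribution. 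For the remainder difference, Lemma \ref{estimateslemma}(2) gives
\[
(\hat R[\hat u]_{k,k}-\hat R[\hat u]_{k+1,k+1})_s = -M_{k+1}(t)\sum_{i=0}^{k-1}\hat b_{i+1,k+1}l_i(s) - l_k(s)\sum_{j=0}^{k+1}\hat b_{k+1,j}M_j(t);
\]
the $M_{k+1}(t)$ summand vanishes identically under $\langle\cdot,\cdot\rangle_{\hat K}$ because $M_{k+1}$ vanishes at all $(k+1)$ Gauss--Lobatto nodes in $t$.

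The essential new difficulty is the $l_k(s)\sum_j\hat b_{k+1,j}M_j(t)$ contribution. In Lemma \ref{lemma-bilinear-auxvx} the analogous term was neutralized because $\overline{\hat v_s}$ is a constant in $s$, so $\Pi_1\hat a\cdot\overline{\hat v_s}$ has $s$-degree at most one and is annihilated by $l_k(s)$ under $s$-integration. Here $\hat v_t$ has full $s$-degree $k$ and the direct orthogonality argument fails. My proposal is to restore it by integrating by parts in $t$: after passing from the quadrature to the exact integral (with the quadrature discrepancy controlled by the same linear-form-plus-Bramble--Hilbert device used in the proof of Theorem \ref{a-ah}), the $\hat v_t$ is traded for $\hat v$, and the surviving bulk integrand has the form $l_k(s)\cdot(\text{low }s\text{-degree factors involving }\hat a,\hat a_t,M_j'(t))\cdot \hat v$. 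Splitting $\hat v = \overline{\hat v}+(\hat v-\overline{\hat v})$ in $s$ and $\hat a = \Pi_1\hat a+(\hat a-\Pi_1\hat a)$ then reproduces the orthogonality configuration of Lemma \ref{lemma-bilinear-auxvx} and delivers the $\mathcal{O}(h^{k+2})$ bulk bound. The IBP also produces line integrals at $t=\pm 1$; after summing over cells, interior horizontal edges cancel by continuity of $v_h$ and of $(\hat a\hat u_s)_I$, leaving only integrals along $L_1\cup L_3$. These are controlled exactly as in the last part of the proof of Theorem \ref{a-ah}: the polynomial trace identity \eqref{trace-polynomial} combined with Cauchy--Schwarz gives the $\mathcal{O}(h^{k+3/2})$ bound in \eqref{crossterm-1}; when $v_h\in V_0^h$, $v_h\equiv 0$ on $L_1\cup L_3$ forces $(v_h)_{xx}\equiv 0$ there, so the line integrals vanish and we obtain \eqref{crossterm-2}. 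The main obstacle is exactly this IBP-in-$t$ step together with the bookkeeping of inter-cell cancellations and boundary line integrals; matching the claimed orders requires combining the sharper bound $|\hat b_{k+1,j}|\leq C|\hat f|_{k+2}$ for $j\geq 1$ from Lemma \ref{estimateslemma}(4) with the fact that $M_0(t)\equiv 1$ contributes nothing under the $t$-derivative appearing after IBP, so that the weaker $|\hat b_{k+1,0}|\leq C|\hat f|_{k+1}$ does not spoil the estimate.
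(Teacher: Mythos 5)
Your proposal follows the same architecture as the paper's proof: reference-cell transformation, the split of $\hat R[\hat u]_{k,k}$ into $\hat R[\hat u]_{k+1,k+1}$ plus the difference, the vanishing of the $M_{k+1}(t)$ summand at the quadrature nodes, reduction to the single term $\langle l_k(s)\hat b_{k+1}(t),\hat a\hat v_t\rangle_{\hat K}$ with $\hat b_{k+1}(t)=\sum_{j=0}^{k+1}\hat b_{k+1,j}M_j(t)$, an integration by parts in $t$ that produces line integrals whose interior contributions cancel by continuity and whose contributions along $L_1\cup L_3$ account for the half-order dichotomy, and the observation that these vanish for $v_h\in V^h_0$. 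You also correctly identify the genuinely new obstruction ($\hat v_t$ has full $s$-degree $k$) and the role of $M_0'\equiv 0$ in avoiding the weaker bound on $\hat b_{k+1,0}$.

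The gap is in your bulk estimate after the integration by parts in $t$. Trading $\hat v_t$ for $\hat v$ does not lower the $s$-degree: $\hat v$ is still of degree $k$ in $s$, so the orthogonality configuration of Lemma \ref{lemma-bilinear-auxvx} is \emph{not} reproduced (there the decisive fact was that $\overline{\hat a}(\hat v_s-\overline{\hat v_s})$ has $s$-degree $k-1$ and is annihilated by $l_k(s)$; that fails for $\hat v$ just as it fails for $\hat v_t$). Concretely, after the $t$-IBP the bulk integrand is $l_k(s)\bigl(\hat b_{k+1}'(t)\hat a+\hat b_{k+1}(t)\hat a_t\bigr)\hat v$, and with your splitting the piece
$\iint_{\hat K} l_k(s)\hat b_{k+1}'(t)\,\hat\Pi_1\hat a\,(\hat v-\overline{\hat v})\,dsdt$
is not killed by $l_k(s)\perp P^{k-1}(s)$ (the second factor has $s$-degree $k+1$), while Poincar\'e gives only $\|\hat v-\overline{\hat v}\|_{0,\hat K}\leq C|\hat v|_{1,\hat K}=C|v|_{1,e}$, i.e.\ a single power of $h$; combined with $\|\hat b_{k+1}'\|\leq C|\hat u|_{k+2,\hat K}=Ch^{k+1}|u|_{k+2,e}$ this yields only $\mathcal O(h^{k+1})$, one order short of the claim (the term $l_k\hat b_{k+1}\hat a_t\hat v$ has the same shortfall). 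The estimate is repairable — subtract $\hat\Pi_1\hat v$ rather than $\overline{\hat v}$, or invoke the full orthogonality of $l_k(s)$ to $P^{k-1}(s)$ via Bramble--Hilbert in $s$ together with an inverse estimate on $\partial_s^k\hat v$ — but not by the mechanism you state. The paper sidesteps this entirely: it applies the $t$-integration by parts only to the quadrature discrepancy of the interpolant $(l_k\hat b_{k+1}\hat a)_I\hat v_t$ (yielding the terms $II$ and $III$, handled by the $\hat E_{\hat v}$/$\hat\Pi_1\hat v$ device and \eqref{trace-polynomial}), while the surviving exact integral $\iint_{\hat K}l_k(s)\hat b_{k+1}(t)\hat a\hat v_t\,dsdt$ is integrated by parts in $s$, using $M_{k+1}'=l_k$ and $M_{k+1}(\pm1)=0$ so that no boundary terms appear and a genuine order gain is extracted from $M_{k+1}(s)$, with the resulting integral estimated by Lemma 3.7 of \cite{li2019superconvergence}.
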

\begin{proof}
 We ignore the subscript in $v_h$ and we have
 $$\langle a (u-u_p)_x, v_y\rangle_h=\sum_e \langle a (u-u_p)_x, v_y\rangle_{e,h},$$ 
and on each cell $e$  
\begin{align}
& \langle a(u-u_p)_x,v_y \rangle_{e,h}
=\langle (R[u]_{k,k})_x, av_y\rangle_{e,h}
= \langle (\hat R[\hat u]_{k,k})_s, \hat a\hat v_t\rangle_{\hat K}\nonumber\\
=&\langle (\hat R[\hat u]_{k+1,k+1})_s, \hat a\hat v_t\rangle_{\hat K}+\langle(\hat R[\hat u]_{k,k}-\hat R[\hat u]_{k+1,k+1})_s, \hat a\hat v_t\rangle_{\hat K}.\label{auxvy-remainder-highlow}
\end{align}
By the same arguments as in the proof of Lemma \ref{lemma-bilinear-auxvx}, we have
\begin{equation}\label{auxvy-1stterm}
\langle (\hat R[\hat u]_{k+1,k+1})_s , \hat a \hat v_t \rangle_{\hat K}  = \mathcal O(h^{k+2}) \|a\|_{1,\infty} |u|_{k+3,2,e} \|v\|_{2,e},
\end{equation}
and
\begin{align*}
\langle(\hat R[\hat u]_{k,k}-\hat R[\hat u]_{k+1,k+1})_s, \hat a\hat v_t\rangle_{\hat K}
= - \langle l_{k}(s)\sum_{j=0}^{k+1}\hat b_{k+1,j}M_j(t), \hat a\hat v_t\rangle_{\hat K}.
\end{align*}
For simplicity, we define
$$\hat b_{k+1}(t):=\sum_{j=0}^{k+1}\hat b_{k+1,j}M_j(t).$$
then by the third and fourth estimates in Lemma \ref{estimateslemma}, we have 
\begin{align*}
&|\hat b_{k+1}(t)|\leq C\sum_{j=0}^{k+1} |\hat b_{k+1,j}| \leq C |\hat u|_{k+1,\hat K},\\
 &|\hat b_{k+1}^{(m)}(t)|\leq C\sum_{j=m}^{k+1} |\hat b_{k+1,j}| \leq C |\hat u|_{k+2,\hat K}, \quad 1\leq m .
 \end{align*}
We use the same technique in the proof of Theorem \ref{a-ah},
\begin{align*}
&\langle(\hat R[\hat u]_{k,k}- \hat R[\hat u]_{k+1,k+1})_s, \hat a\hat v_t\rangle_{\hat K}
= -\langle l_{k}(s)\hat b_{k+1}(t), \hat a\hat v_t\rangle_{\hat K}\\
= & -\iint_{\hat K}l_{k}(s)\hat b_{k+1}(t)\hat a \hat v_td^hsd^ht
=- \iint_{\hat K}(l_{k}\hat b_{k+1}\hat a)_I\hat v_td^hsd^ht\\
=&-\iint_{\hat K}(l_{k}\hat b_{k+1}\hat a)_I\hat v_td^hsd^ht +\iint_{\hat K}l_{k}\hat b_{k+1}\hat a\hat v_tdsdt - \iint_{\hat K}l_{k}\hat b_{k+1}\hat a\hat v_tdsdt.
\end{align*}
and 
\begin{align*}
&- \iint_{\hat K}(l_{k}\hat b_{k+1}\hat a)_I\hat v_td^hsd^ht + \iint_{\hat K}l_{k}\hat b_{k+1}\hat a\hat v_tdsdt  \\
= & \iint_{\hat K}\left[l_{k}\hat b_{k+1}\hat a-(l_{k}\hat b_{k+1}\hat a)_I\right]\hat v_tdsdt + \iint_{\hat K}(l_{k}\hat b_{k+1}\hat a)_I\hat v_tdsdt - \iint_{\hat K}(l_{k}\hat b_{k+1}\hat a)_I\hat v_td^hsdt\\
= & \iint_{\hat K}\left[l_{k}\hat b_{k+1}\hat a-(l_{k}\hat b_{k+1}\hat a)_I\right]\hat v_tdsdt + \iint_{\hat K}\partial_t(l_{k}\hat b_{k+1}\hat a)_I\hat vd^hsdt - \iint_{\hat K}\partial_t(l_{k}\hat b_{k+1}\hat a)_I\hat vdsdt\\
&+\left( \left.\int_{-1}^1 (l_{k}\hat b_{k+1}\hat a)_I\hat v ds \right|^{t=1}_{t=-1} - \left. \int_{-1}^1 (l_{k}\hat b_{k+1}\hat a)_I\hat v d^hs \right|^{t=1}_{t=-1} \right) = I + II + III.
\end{align*}

After integration by parts with respect to the variable $s$, we have
\begin{align*}
& \iint_{\hat K}l_{2}(s)\hat b_3(t)\hat a\hat v_t dsdt= -\iint_{\hat K}M_{3}(s)\hat b_3(t)(\hat a_s\hat v_t + \hat a\hat v_{st}) ds dt,
\end{align*}
which is exactly the same integral estimated in the proof of Lemma 3.7 in \cite{li2019superconvergence}. 
By the same proof of Lemma 3.7 in \cite{li2019superconvergence}, after summing over all elements, we have
the estimate for the term $\iint_{\hat K}l_{k}(s)\hat b_{k+1}(t)\hat a\hat v_t dsdt$:
\[\sum_e \iint_{\hat K}l_{k}(s)\hat b_{k+1}(t)\hat a\hat v_t dsdt= 
\begin{cases}
\mathcal O(h^{k+\frac32})\|a\|_{k+2,\infty}\|u\|_{k+3}\|v\|_2,& \forall v\in V^h,\\
\mathcal O(h^{k+2})\|a\|_{k+2,\infty}\|u\|_{k+3}\|v\|_2,& \forall v\in V_0^h. 
\end{cases}\]

Then we can do similar estimation as in Theorem \ref{a-ah} for $I,II,III$ separately.

For term $I$, by Theorem \ref{bh-lemma} and the estimate \eqref{quaderror-theorem-ref}, we have
\begin{align*}
&\iint_{\hat K}\left[l_{k}\hat b_{k+1}\hat a-(l_{k}\hat b_{k+1}\hat a)_I\right]\hat v_tdsdt\\
=&\iint_{\hat K}\left[l_{k}\hat b_{k+1}\hat a-(l_{k}\hat b_{k+1}\hat a)_I\right]\overline{\hat v_t}dsdt + \iint_{\hat K}\left[l_{k}\hat b_{k+1}\hat a-(l_{k}\hat b_{k+1}\hat a)_I\right](\hat v_t -\overline{\hat v_t})dsdt\\
\leq & C\left[l_{k}\hat b_{k+1}\hat a\right]_{k+2,\hat K}|\hat v|_{1,\hat K} + C\left[l_{k}\hat b_{k+1}\hat a\right]_{k+1,\hat K}|\hat v|_{2,\hat K}\\
\leq & C \left(\sum_{m=2}^{k+2}|\hat a|_{m,\infty,\hat K} \max_{t\in[-1,1]}|\hat b_{k+1}(t)| \right)|\hat v|_{1,\hat K} + C \left(\sum_{m=0}^{k+2}|\hat a|_{m,\infty,\hat K} \max_{t\in[-1,1]}|\hat b_{k+1}^{(k+2-m)}(t)| \right)|\hat v|_{1,\hat K} \\
+& C\left(\sum_{m=1}^{k+1}|\hat a|_{m,\infty,\hat K} \max_{t\in[-1,1]}|\hat b_{k+1}(t)|\right)|\hat v|_{2,\hat K} +  C\left(\sum_{m=0}^{k+1}|\hat a|_{m,\infty,\hat K} \max_{t\in[-1,1]}|\hat b_{k+1}^{(k+1-m)}(t)|\right)|\hat v|_{2,\hat K}\\
= &\mathcal O(h^{k+2})\|a\|_{k+2,\infty}\|u\|_{k+2,e}\|v\|_{2,e}.
\end{align*}
For term $II$, as in the proof of Theorem \ref{a-ah}, we define the linear form as
$$\hat E_{\hat v}(\hat f) = \iint_{\hat K}(\hat F_I)_t \hat v dsdt -\iint_{\hat K} (\hat F_I)_t\hat v d^hsdt,$$
for each $\hat v \in Q^k(\hat K)$ and   $\hat F$ is an antiderivative of $\hat f$ w.r.t. variable $t$. We can easily see that $\hat E_{\hat v}$ is well defined and $\hat E_{\hat v}$ is a continuous linear form on $H^k(\hat K)$.
With projection $\hat \Pi_1$ defined in \eqref{projection1}, we have 
$$
\hat E_{\hat v}(\hat f) = \hat E_{\hat v - \hat \Pi_1\hat v} (\hat f)+ \hat E_{\hat \Pi_1\hat v}(\hat f),\quad  \forall \hat v \in Q^k(\hat K).
$$ 
Since $ Q^{k-1}(\hat K) \subset \ker \hat E_{\hat v - \hat\Pi_1\hat v}$ thus 
$$
\hat E_{\hat v - \hat \Pi_1\hat v} (\hat f)  \leq C[f]_{k,\hat K}\|\hat v- \hat \Pi_1\hat v\|_{0,\hat K}\leq C[f]_{k,\hat K}|\hat v|_{2,\hat K}  
$$
 and 
 $$
\hat E_{\hat \Pi_1\hat v} (\hat f) =\iint_{\hat K}(\hat F_I)_t \hat \Pi_1\hat v dsdt -\iint_{\hat K} (\hat F_I)_t\hat \Pi_1\hat v d^hsdt=0.
$$
Thus we have 
 \begin{align*}
& \iint_{\hat K} \partial_t(l_k\hat b_{k+1}\hat a)_I\hat v d^hsdt -\iint_{\hat K}\partial_t(l_k\hat b_{k+1}\hat a)_I\hat v dsdt  
= -\hat E_{\hat v}((l_k\hat b_{k+1}\hat a)_t)  \\
=& -\hat E_{\hat v - \Pi_1\hat v} ((l_k\hat b_{k+1}\hat a)_t)
\leq  C [(l_k\hat b_{k+1}\hat a)_t]_{k,\hat K}|\hat v_h|_{2,\hat K} 
= \mathcal O(h^{k+2})\|a\|_{k+1,\infty, e}\|u\|_{k+2,e}|v|_{2,e}.
 \end{align*}

Now we only need to discuss term $III$. Let $L_1$ and $L_3$ denote the top and bottom boundaries of $\Omega$ and let $l^e_1$, $l^e_3$ denote the top and bottom edges of element $e$ (and $l^{\hat K}_1$ and $l^{\hat K}_3$ for $\hat K$). Notice that after mapping back to the cell $e$ we have
\begin{align*}
b_{k+1}(y_e+h)=\hat b_{k+1}(1)=\sum_{j=0}^{k+1} \hat b_{k+1,j} M_j(1)=\hat b_{k+1,0}+ \hat b_{k+1,1}\\
=(k+\frac12)\int_{-1}^1 \partial_s \hat u(s,1) l_k(s)ds 
=(k+\frac12)\int_{x_e-h}^{x_e+h} \partial_x  u(x,y_e+h) l_k(\frac{x-x_e}{h})dx,
\end{align*}
and similarly we get  $b_{k+1}(y_e-h)=\hat b_{k+1}(-1)=(k+\frac12)\int_{x_e-h}^{x_e+h} \partial_x  u(x,y_e-h) l_k(\frac{x-x_e}{h})dx$. Thus the term $l(\frac{x-x_e}{h})b_{k+1}(y)M_{k+1}av$ is continuous across the top and bottom edges of cells. Therefore, if summing over all 
elements $e$, the line integral on the inner edges are cancelled out.  So after summing over all elements, the line integral  reduces to two line integrals along $L_1$ and $L_3$. We only need to discuss one of them. 
For a cell $e$ adjacent to $L_1$,  consider its reference cell $\hat K$ and define linear form  $\hat E(\hat f ) = \int_{-1}^1 \hat f(s,1) ds - \int_{-1}^1 \hat f(s,1)d^hs$, then we have  
$$\hat E(\hat f \hat v)  \leq C |\hat f|_{0,\infty, l^{\hat K}_1} |\hat v|_{0,\infty,l^{\hat K}_1} \leq C\|\hat f\|_{2,l^{\hat K}_1}\|\hat v\|_{0,l^{\hat K}_1},$$ thus the mapping $\hat f \rightarrow \hat E(\hat f \hat v)$ is continuous with operator norm less than $C\|\hat v\|_{0,l^{\hat K}_1}$ for some $C$. Since $\hat E((\hat a\hat u_s)_I\hat \Pi_1 \hat v)=0$ we have 
\begin{align*}
 & \sum_{e\cap L_1\neq \emptyset}\int_{-1}^1 (l_{k}\hat b_{k+1}\hat a)_I\hat v ds- \int_{-1}^1 (l_{k}\hat b_{k+1}\hat a)_I\hat v d^hs\\
= & \sum_{e\cap L_1\neq \emptyset}\hat E((l_k\hat b_{k+1}\hat a)_I \hat v) = \sum_{e\cap L_1\neq \emptyset}\hat E((l_k\hat b_{k+1}\hat a)_I(\hat v-\hat \Pi_1 \hat v)) \leq \sum_{e\cap L_1\neq \emptyset}C[(l_k\hat b_{k+1}\hat a)_I]_{k,l^{\hat K}_1}[\hat v]_{2,l^{\hat K}_1} \\
 \leq & \sum_{e\cap L_1\neq \emptyset} C(|l_k\hat b_{k+1}\hat a-(l_k\hat b_{k+1}\hat a)_I|_{k,l^{\hat K}_1}+|l_k\hat b_{k+1}\hat a|_{k,l^{\hat K}_1})[\hat v]_{2,l^{\hat K}_1}\\
 \leq & \sum_{e\cap L_1\neq \emptyset} (|l_k\hat b_{k+1}\hat a|_{k+1,l^{\hat K}_1}+|l_k\hat b_{k+1}\hat a|_{k,l^{\hat K}_1})[\hat v]_{2,l^{\hat K}_1}
 \leq \sum_{e\cap L_1\neq \emptyset}C \|\hat a\|_{k,\infty,\hat K} |\hat b_{k+1}(1)|[\hat v]_{2,l^{\hat K}_1}.
\end{align*}
Since $l_k(t)=\frac{1}{2^k k!}\frac{d^k}{dt^k} (t^2-1)^k$, after integration by parts $k$ times, $$\hat b_{k+1}(1)=(k+\frac12)\int_{-1}^{1} \partial_s  u(s,1) l_k(s)dx =(-1)^k (k+\frac12)\int_{-1}^{1} \partial^{k+1}_s  u(s,1) L(s)ds, $$
where $L(s)$ is a polynomial of degree $2k$ by taking antiderivatives of $l_k(s)$ $k$ times.
Then by Cauchy-Schwarz inequality we have
\begin{align*}
\hat b_{k+1}(1) \leq C \left(\int_{-1}^1|\partial^{k+1}_s  \hat u(s,1)|^2ds\right)^{\frac12} \leq Ch^{k+\frac12} |u|_{k+1,l^e_1}.
\end{align*}
By \eqref{trace-polynomial}, 
we get 
$|\hat v|_{2,l^{\hat K}_1} =h^{\frac32}|\hat v|_{2,l^{e}_1} \leq  C h |v|_{2,e}. $
Thus we have
\begin{align*}
 & \sum_{e\cap L_1\neq \emptyset}\int_{-1}^1 (l_{k}\hat b_{k+1}\hat a)_I\hat v ds- \int_{-1}^1 (l_{k}\hat b_{k+1}\hat a)_I\hat v d^hs
\leq \sum_{e\cap L_1\neq \emptyset}C \|\hat a\|_{k,\infty,\hat K} |\hat b_{k+1}(1)| |\hat v|_{2,l^{\hat K}_1}\\
=& \mathcal O(h^{k+\frac32})\sum_{e\cap L_1\neq \emptyset}\|a\|_{k,\infty }|u|_{k+1, l^e_1} 
|v|_{2, e}
=\mathcal O(h^{k+\frac32})\|a\|_{k,\infty } |u|_{k+1, L_1} 
\|v\|_{2, \Omega}
=\mathcal O(h^{k+\frac32}) \|a\|_{k,\infty } \|u\|_{k+2, \Omega} 
\|v\|_{2, \Omega},
\end{align*}
where the trace inequality $ \|u\|_{k+1,\partial \Omega} \leq C  \|u\|_{k+2, \Omega}$ is used.

Combine all the estimates above, we get \eqref{crossterm-1}. Since the $\frac12$ order loss is only due to the line integral along $L_1$ and $L_3$, on which  $v_{xx}=0$ if $v\in V^h_0$, we get \eqref{crossterm-2}. 
\end{proof}

By all the discussions in this subsection, we have proven \eqref{ellip-u-up-1} and \eqref{ellip-u-up-2}.

\section{Homogeneous Dirichlet Boundary Conditions}
\label{sec-main}

\subsection{$V^h$-ellipticity}
\label{sec-vhellipticity}
In order to discuss the scheme \eqref{intro-scheme2},
we need to show $A_h$   satisfies $V^h$-ellipticity \begin{equation}
\forall v_h\in V^h_0,\quad  C\|v_h\|^2_{1}\leq A_h(v_h,v_h).
\end{equation}
We first consider the $V_h$-ellipticity  for  the case $\mathbf b\equiv 0$. 
\begin{lemma}
\label{vh-ellipticity-lemma}
Assume the coefficients in \eqref{bilinearform} satisfy that
$\mathbf b\equiv 0$, both $c(x,y)$ and the eigenvalues of $\mathbf a(x,y)$ have a uniform upper bound and a uniform positive lower bound,
 then there exist two constants $C_1, C_2>0$ independent of mesh size $h$ such that 
\begin{equation*}
\forall v_h\in V_0^h,\quad  C_1\|v_h\|^2_{1}\leq A_h(v_h,v_h)\leq C_2 \|v_h\|^2_{1}.
\end{equation*}
\end{lemma}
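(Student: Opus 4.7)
My plan is to reduce both inequalities to a norm equivalence on the finite-dimensional polynomial space $Q^k(\hat K)$ and then use the pointwise positivity of the coefficients and positivity of the Gauss-Lobatto weights.

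The first step is to establish a quadrature-vs-$L^2$ norm equivalence on the reference cell: for all $\hat p \in Q^k(\hat K)$,
\begin{equation*}
c_1 \iint_{\hat K} \hat p^{\,2}\, ds\, dt \;\leq\; \iint_{\hat K} \hat p^{\,2}\, d^h s\, d^h t \;\leq\; c_2 \iint_{\hat K} \hat p^{\,2}\, ds\, dt,
\end{equation*}
where $c_1, c_2 > 0$ depend only on $k$. Both sides are continuous, positive semidefinite quadratic forms on the finite-dimensional space $Q^k(\hat K)$, and the discrete form vanishes only when $\hat p$ vanishes at all $(k+1)\times(k+1)$ Gauss-Lobatto nodes, which is a unisolvent set for $Q^k$ and hence forces $\hat p \equiv 0$. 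Equivalence then follows by compactness. Scaling back to a mesh cell $e$ of size $h$ via the affine map gives $c_1 \|p\|_{0,e}^2 \leq \langle p, p\rangle_{e,h} \leq c_2 \|p\|_{0,e}^2$ for any $p \in Q^k(e)$. The same estimates apply to $(v_h)_x$ and $(v_h)_y$, which lie in a subspace of $Q^k(e)$.

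The second step is the coercivity. Since $\mathbf b \equiv 0$,
\begin{equation*}
A_h(v_h, v_h) = \langle \mathbf a \nabla v_h,\nabla v_h\rangle_h + \langle c v_h, v_h\rangle_h,
\end{equation*}
where each term is a sum over cells and over Gauss-Lobatto nodes of the nonnegative integrand against positive weights. Using the pointwise bound $\nabla v_h^T \mathbf a \nabla v_h \geq \lambda_{\mathbf a,\min} |\nabla v_h|^2$ and the uniform positive lower bound on $c$, I can replace the integrand by its scalar lower bound node-by-node to obtain
\begin{equation*}
A_h(v_h, v_h) \geq \lambda_{\min}\big(\langle (v_h)_x,(v_h)_x\rangle_h + \langle (v_h)_y,(v_h)_y\rangle_h\big) + c_{\min}\langle v_h,v_h\rangle_h,
\end{equation*}
and the equivalence from step one yields $A_h(v_h,v_h) \geq C(|v_h|_1^2 + \|v_h\|_0^2)$, which is $\geq C_1\|v_h\|_1^2$ (the additional Poincaré inequality on $V_0^h \subset H_0^1(\Omega)$ is available but not even needed here because $c_{\min} > 0$).

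The upper bound is symmetric: using the uniform upper bounds $\lambda_{\mathbf a,\max} \leq \Lambda$ and $c \leq C_c$ together with $|\nabla v_h^T \mathbf a \nabla v_h|\leq \Lambda|\nabla v_h|^2$ pointwise, the same node-by-node replacement and the upper bound in the norm equivalence give $A_h(v_h,v_h)\leq C_2\|v_h\|_1^2$. The only nontrivial ingredient is the norm equivalence on $Q^k(\hat K)$, and that is a clean finite-dimensional compactness argument; no quadrature-error analysis of the type developed in Section~3 is needed for this lemma.
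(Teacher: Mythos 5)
Your proof is correct and follows essentially the same route as the paper's: a finite-dimensional norm-equivalence argument on $Q^k(\hat K)$ resting on the strict positivity of the Gauss--Lobatto weights and the $Q^k$-unisolvency of the $(k+1)\times(k+1)$ node set, followed by pointwise upper and lower bounds on the coefficients. The only (harmless) difference is that you apply the equivalence to the $L^2$ norms of $v_h$, $(v_h)_x$, $(v_h)_y$ separately and use $c\geq c_{\min}>0$ to control the zeroth-order term, whereas the paper states the equivalence as an $H^1$-seminorm statement on the quotient space $Q^k(\hat K)/Q^0(\hat K)$ and then invokes the equivalence of $|\cdot|_{1}$ and $\|\cdot\|_{1}$ on $H^1_0(\Omega)$ --- a Poincar\'e-type step your variant does not need.
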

\begin{proof}
Let $Z_{0,\hat K}$ denote the set of $(k+1)\times (k+1)$ Gauss-Lobatto points on the reference cell $\hat K$.
First we notice that the set $Z_{0,\hat K}$ is a $Q^k(\hat K)$-unisolvent subset. Since the Gauss-Lobatto quadrature weights are strictly positive, we have 
$$\forall \hat p\in Q^k(\hat K),\, \sum_{i=1}^{2} \langle \partial_i \hat p,\partial_i \hat p\rangle_{\hat K}=0 \Longrightarrow \partial_i \hat p=0 \textrm{ at quadrature points},$$
where $i=1,2$ represents the spatial derivative on variable $x_i$ respectively.
Since $\partial_i \hat p\in Q^k(\hat K)$ and it vanishes on a $Q^k(\hat K)$-unisolvent subset, we have $\partial_i \hat p\equiv 0$. As a consequence, $\sqrt{\sum_{i=1}^{n}\langle \partial_i \hat p,\partial_i \hat p\rangle_h}$ defines a norm over the quotient space $Q^k(\hat K)/Q^0(\hat K)$. Since that $|\cdot|_{1,\hat K}$ is also a norm over the same quotient space, by the equivalence of norms over a finite dimensional space, we have
$$
\forall \hat p\in Q^k(\hat K),\quad  C_1 |\hat p|^2_{1,\hat K}\leq \sum_{i=1}^{n}\langle \partial_i \hat p,\partial_i \hat p\rangle_{\hat K} \leq C_2 |\hat p|^2_{1,\hat K}.
$$
On the reference cell $\hat K$, by the assumption on the coefficients, we have 
$$
 C_1 |\hat v_h|^2_{1,\hat K} \leq C_1  \sum_{i}^{n} \langle \partial_i \hat v_h, \partial_i \hat v_h
\rangle_{\hat K} \leq \sum_{i,j=1}^{n} \left(\langle\hat a_{ij} \partial_i \hat v_h, \partial_j \hat v_h\rangle_{\hat K}+\langle\hat c \hat v_h, \hat v_h\rangle_{\hat K}\right)\leq 
C_2 \|\hat v_h\|^2_{1,\hat K}$$

Mapping these back to the original cell $e$ and summing over all elements, by the equivalence of two norms $|\cdot|_{1}$ and $\|\cdot\|_{1}$ for the space $H^1_0(\Omega)\supset V^h_0$ \cite{ciarlet1991basic}, we get $C_1\|v_h\|^2_1 \leq A_h(v_h,v_h)\leq C_2\|v_h\|^2_1.$  
\end{proof}

 For discussing  $V_h$-ellipticity  when $\mathbf b$ is nonzero, by Young's inequality we have 
\begin{align*}
|\langle \mathbf b\cdot \nabla v_h, v_h\rangle_h| \leq \sum_e\iint_e \frac{(\mathbf b \cdot \nabla v_h)^2}{4c} + c |v_h|^2d^hxd^hy \leq 
\langle \frac{|\mathbf b|^2}{4c} \nabla v_h, \nabla v_h\rangle_h + \langle c v_h, v_h\rangle_h.
\end{align*}
Thus we have 
\begin{align*}
\langle \mathbf a \nabla v_h, \nabla v_h\rangle_h +\langle \mathbf b\cdot \nabla v_h, v_h\rangle_h + \langle c v_h, v_h\rangle_h \geq 
\langle \lambda_{\mathbf a} \nabla v_h, \nabla v_h\rangle_h - \langle \frac{|\mathbf b|^2}{4c} \nabla v_h, \nabla v_h\rangle_h,
\end{align*}
where $\lambda_{\mathbf{a}}$ is smallest eigenvalue of $\mathbf{a}$.
  Then we have the following Lemma
  \begin{lemma}
  \label{vh-ellipticity-lemma-b-nozero}
Assume  $4\lambda_{\mathbf{a}}c > |\mathbf{b}|^2$,  then there exists a constant $C>0$ independent of mesh size $h$ such that 
\begin{equation*}
\forall v_h\in V_0^h, \quad A_h(v_h,v_h) \geq C \|v_h\|^2_{1}.
\end{equation*}
  \end{lemma}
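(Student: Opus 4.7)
The plan is to build directly on the Young's inequality computation displayed immediately before the lemma, which already gives
\[
A_h(v_h,v_h) \geq \left\langle \left(\lambda_{\mathbf a}-\frac{|\mathbf b|^2}{4c}\right) \nabla v_h,\, \nabla v_h\right\rangle_h.
\]
So the task reduces to showing that the right-hand side controls $\|v_h\|_1^2$ from below uniformly in $h$.

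First I would read the hypothesis $4\lambda_{\mathbf a} c > |\mathbf b|^2$ in its uniform sense: the continuous function $g(x,y):=\lambda_{\mathbf a}(x,y)-\frac{|\mathbf b(x,y)|^2}{4c(x,y)}$ admits a positive lower bound $g_0>0$ on $\bar\Omega$. Because the $(k+1)\times(k+1)$ Gauss-Lobatto quadrature weights are strictly positive, $\langle g\nabla v_h,\nabla v_h\rangle_h \geq g_0\,\langle\nabla v_h,\nabla v_h\rangle_h$ on each cell, and hence globally after summation.

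Next I would invoke the reference-cell norm equivalence that was already established inside the proof of Lemma~\ref{vh-ellipticity-lemma}: the discrete quadratic form $\sum_{i=1}^2 \langle \partial_i \hat p, \partial_i \hat p \rangle_{\hat K}$ is a norm on $Q^k(\hat K)/Q^0(\hat K)$, equivalent to $|\hat p|_{1,\hat K}^2$ by the equivalence of norms on a finite-dimensional space. Mapping back to a physical cell $e$ (the scaling factor $h$ appears on both sides of the equivalence) and summing over all $e\in\Omega_h$ yields
\[
\langle\nabla v_h,\nabla v_h\rangle_h \;\geq\; C_1\, |v_h|_1^2,
\]
with $C_1>0$ independent of $h$. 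Combining with the previous step gives $A_h(v_h,v_h)\geq C_1 g_0\, |v_h|_1^2$. Finally, since $v_h\in V_0^h\subset H^1_0(\Omega)$, Poincar\'e's inequality (or equivalently the equivalence of $|\cdot|_1$ and $\|\cdot\|_1$ on $H_0^1(\Omega)$, already used in \eqref{ibp1}--\eqref{ibp2}) upgrades the seminorm bound to a full norm bound, yielding $A_h(v_h,v_h)\geq C\|v_h\|_1^2$ as claimed.

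I do not anticipate any genuine obstacle: each ingredient (positivity of quadrature weights, reference-cell norm equivalence on $Q^k/Q^0$, and the Poincar\'e-type equivalence on $H_0^1$) has already been used verbatim in Lemma~\ref{vh-ellipticity-lemma} and in Section~\ref{sec-preliminaries}. The only subtlety worth flagging is the standing interpretation of the strict inequality $4\lambda_{\mathbf a}c>|\mathbf b|^2$ as a uniform strict inequality, which is essential to secure the positive constant $g_0$; without it one could not absorb the lower-order term coming from $\mathbf b$ uniformly in~$h$.
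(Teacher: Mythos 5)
Your proposal is correct and follows essentially the same route as the paper: the paper's own ``proof'' consists precisely of the Young's inequality computation displayed immediately before the lemma, with the remaining steps (uniform positivity of $\lambda_{\mathbf a}-|\mathbf b|^2/(4c)$, the reference-cell norm equivalence from Lemma~\ref{vh-ellipticity-lemma}, and the Poincar\'e-type equivalence on $H_0^1$) left implicit. You have simply written out those implicit steps, including the worthwhile observation that the strict inequality must be read uniformly on $\bar\Omega$.
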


\subsection{Standard estimates for the dual problem}\label{dualpro}

In order to apply the Aubin-Nitsche duality argument for establishing superconvergence of function values, we need certain estimates on a proper 
dual problem. 
Define $\theta_h:=u_h-u_p$. 
Then we consider the dual problem:
  find $w\in H_0^1(\Omega)$ satisfying 
\begin{equation}
A^*(w,v)=(\theta_h,v),\quad \forall v\in H_0^1(\Omega),
\label{eqn-dual}
\end{equation}
where $A^*(\cdot,\cdot)$ is the adjoint bilinear form of $A(\cdot,\cdot)$ such that 
$$A^*(u,v) = A(v,u) = (\mathbf a \nabla v, \nabla u)+(\mathbf b\cdot \nabla v,u)+(c v, u).$$
 Let $w_h\in V_0^h$ be the solution to
\begin{equation}
A^*_h(w_h,v_h)=(\theta_h,v_h),\quad \forall v_h\in V_0^h.
\label{scheme-dual}
\end{equation}
Notice that the right hand side of \eqref{scheme-dual} is different from the right hand side of the scheme \eqref{intro-scheme2}.

We need the following standard estimates on $w_h$ for the dual problem. 

\begin{theorem}
\label{elliptic-reg} 
Assume all coefficients in \eqref{bilinearform} are in $W^{2,\infty}(\Omega)$,
elliptic regularity and $V^h$ ellipticity holds, we have
\[\|w-w_h\|_1\leq C h \|w\|_2,\]
$$\|w_h\|_2\leq  C\|\theta_h\|_0.$$
\end{theorem}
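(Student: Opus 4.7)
The plan is to prove the two estimates separately using a Strang-type argument for the $H^1$ bound, and then bootstrap it via the inverse estimate together with the elliptic regularity of the continuous dual problem for the $H^2$ bound.

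\textbf{Step 1: $H^1$ error via Strang's first lemma.} Let $\Pi w \in V_0^h$ denote the standard piecewise $Q^k$ Lagrange interpolant of $w$ at the Gauss--Lobatto points (which vanishes on $\partial\Omega$ since $w\in H_0^1$). By the $V^h$-ellipticity of $A_h^*$ and by using $A^*(w, z_h) = (\theta_h, z_h) = A_h^*(w_h, z_h)$ for every $z_h\in V_0^h$, we may write, for any $v_h\in V_0^h$,
\[
 A_h^*(w_h - v_h, z_h) = A^*(w - v_h, z_h) + \bigl[A^*(v_h, z_h) - A_h^*(v_h, z_h)\bigr].
\]
Setting $z_h = w_h - v_h$, coercivity together with continuity of $A^*$ yields
\[
 \|w_h - v_h\|_1 \;\leq\; C\,\|w - v_h\|_1 \;+\; C \sup_{z_h\in V_0^h}\frac{|A^*(v_h, z_h) - A_h^*(v_h, z_h)|}{\|z_h\|_1}.
\]
Choose $v_h=\Pi w$. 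Standard interpolation theory gives $\|w-\Pi w\|_1 \leq Ch\|w\|_2$ and $\|\Pi w\|_2\leq C\|w\|_2$. The quadrature consistency estimate Theorem~\ref{a-ah-1norm} (applied with $A^*$ in place of $A$, which only swaps the roles of the two arguments and requires the same $W^{2,\infty}$ coefficient assumption) gives
\[
 |A^*(\Pi w, z_h) - A_h^*(\Pi w, z_h)| \;\leq\; Ch\,\|\Pi w\|_2\,\|z_h\|_1 \;\leq\; Ch\,\|w\|_2\,\|z_h\|_1.
\]
Combining these with the triangle inequality $\|w-w_h\|_1\leq \|w-\Pi w\|_1+\|\Pi w-w_h\|_1$ yields the first estimate $\|w-w_h\|_1 \leq Ch\|w\|_2$.

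\textbf{Step 2: $H^2$ bound on $w_h$ via inverse estimate.} The elliptic regularity assumption applied to the continuous dual problem \eqref{eqn-dual} gives $\|w\|_2\leq C\|\theta_h\|_0$. Writing $w_h = (w_h - \Pi w) + \Pi w$ and applying the cellwise inverse estimate \eqref{inverseestimate} together with $\|\Pi w\|_2\leq C\|w\|_2$ produces
\[
 \|w_h\|_2 \;\leq\; \|w_h - \Pi w\|_2 + \|\Pi w\|_2 \;\leq\; Ch^{-1}\|w_h - \Pi w\|_1 + C\|w\|_2.
\]
Using Step~1, $\|w_h - \Pi w\|_1 \leq \|w_h - w\|_1 + \|w - \Pi w\|_1 \leq Ch\|w\|_2$, so the $h^{-1}$ factor is absorbed and we obtain $\|w_h\|_2\leq C\|w\|_2\leq C\|\theta_h\|_0$, which is the second estimate.

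\textbf{Anticipated main obstacle.} The only nontrivial technical point is ensuring that the Strang consistency term is controlled in the \emph{right} norms: Theorem~\ref{a-ah-1norm} produces $\mathcal O(h)\|v_h\|_2\|z_h\|_1$ rather than the symmetric $\mathcal O(h)\|v_h\|_1\|z_h\|_1$, so one must set $v_h$ to an interpolant whose $H^2$-norm is controlled by $\|w\|_2$ (hence the choice $\Pi w$ with its standard $H^2$-stability), not an arbitrary element of $V_0^h$. Once that is done, the rest is a routine duality-plus-inverse-estimate bootstrapping argument.
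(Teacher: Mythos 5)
Your proposal is correct and follows essentially the same route as the paper: a Strang-type argument using $V^h$-ellipticity, the identity $A_h^*(w_h,\cdot)=(\theta_h,\cdot)=A^*(w,\cdot)$, the quadrature bound of Theorem~\ref{a-ah-1norm} for the consistency term, and an inverse-estimate bootstrap for the $H^2$ bound. The only difference is your choice of comparison element — the $Q^k$ Gauss--Lobatto Lagrange interpolant $\Pi w$ instead of the paper's piecewise $Q^1$ projection $\Pi_1 w$ — which is equally valid (the interpolant reproduces $Q^1$ and is $H^2$-stable in 2D by the embedding $H^2(\hat K)\hookrightarrow C^0(\hat K)$, so Bramble--Hilbert gives the same estimates) and has the mild advantage of manifestly lying in $V_0^h$.
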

\begin{proof}
By  $V^h$ ellipticity,  we have $C_1\|w_h-v_h\|_1^2\leq  A^*_h(w_h-v_h, w_h-v_h)$. 
By the definition of the dual problem, we have
\[A^*_h(w_h, w_h-v_h)=(\theta_h, w_h-v_h)=A^*(w, w_h-v_h),\quad\forall v_h\in V_0^h.\]
Thus for any $v_h\in V_0^h$, by Theorem \ref{a-ah-1norm}, we have
 \begin{align*}
& C_1\|w_h-v_h\|_1^2
\leq A^*_h(w_h-v_h, w_h-v_h)\\
=&A^*(w-v_h, w_h-v_h)+[A_h^*(w_h, w_h-v_h)-A^*(w, w_h-v_h)]
+[A^*(v_h, w_h-v_h)-A^*_h(v_h, w_h-v_h)]\\
=&A^*(w-v_h, w_h-v_h)+[A(w_h-v_h, v_h)-A_h(w_h-v_h,v_h)]\\
\leq & C\|w-v_h\|_1 \|w_h-v_h\|_1+Ch \|v_h \|_2 \|w_h-v_h\|_1.
\end{align*}
Thus 
\begin{equation}\label{w-whwithvh}
\|w-w_h\|_1\leq \|w-v_h\|_1 +\|w_h-v_h\|_1 \leq C \|w-v_h\|_1+Ch\|v_h \|_2. 
\end{equation}
Now consider $\Pi_1 w \in V^h_0$ where $\Pi_1$ is the piecewise $Q^1$ projection and its definition on each cell is defined through \eqref{projection1} on the reference cell. 
By the Bramble Hilbert Lemma Theorem \ref{bh-lemma} on the projection error, we have 
\begin{equation}\label{dual-eqn-0}
\|w-\Pi_1 w\|_1\leq C h \|w\|_2,\quad \|w-\Pi_1 w\|_2\leq C \|w\|_2,
\end{equation}
thus $\|\Pi_1 w\|_2\leq \|w\|_2+\|w-\Pi_1 w\|_2\leq C \|w\|_2$.
By setting $v_h=\Pi_1 w$, from \eqref{w-whwithvh} 
we have
\begin{equation}
\|w-w_h\|_1\leq C \|w-\Pi_1 w\|_1+Ch \|\Pi_1 w \|_2\leq C h \|w\|_2. 
\label{1norm-dual}
\end{equation}
By the inverse estimate on the piecewise polynomial $w_h-\Pi_1 w$, we get 
\begin{equation}
\|w_h\|_2
\leq \|w_h-\Pi_1 w\|_2+\|\Pi_1 w-w\|_2+\|w\|_2\leq Ch^{-1}\|w_h-\Pi_1 w\|_1+C\|w\|_2.\label{dual-eqn-1}
\end{equation}
By \eqref{dual-eqn-0} and \eqref{1norm-dual}, we also have
\begin{align}
&\|w_h-\Pi_1 w\|_1\leq  \|w-\Pi_1 w\|_1+\|w-w_h\|_1\leq C h \|w\|_2.
\label{dual-eqn-2}
\end{align}

With \eqref{dual-eqn-1}, \eqref{dual-eqn-2} and the elliptic regularity
$\|w\|_2\leq C \|\theta_h\|_0$, we get  
\begin{equation*}\|w_h\|_2\leq C\|w\|_2\leq  C \|\theta_h\|_0.
\end{equation*}
\end{proof}
\subsection{Superconvergence of function values}

\begin{theorem}
\label{mainthm}
 Assume $a_{ij}, b_i, c \in W^{k+2,\infty}(\Omega)$ and $u(x,y)\in H^{k+3}(\Omega)$, $f(x,y)\in H^{k+2}(\Omega)$. Assume  $V^h$ ellipticity holds. 
 Then  $u_h$ is a $(k+2)$-th order accurate approximation to $u$ in the discrete 2-norm over all the $(k+1)\times (k+1)$ Gauss-Lobatto points:
 \[\|u_h-u\|_{2, Z_0}= \mathcal O (h^{k+2})(\|u\|_{k+3,\Omega}+\|f\|_{k+2,\Omega}).\] 
\end{theorem}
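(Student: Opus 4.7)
The plan is to use the Aubin--Nitsche duality argument, together with the superconvergence estimates of the bilinear form proved in Section \ref{sec-bilinear} and the consistency error estimates from Section \ref{sec-consistency}. Split the error as
\[
u-u_h = (u-u_p) + (u_p - u_h),
\]
where $u_p$ is the piecewise M-type $Q^k$ projection of $u$. Since $u=0$ on $\partial\Omega$, the point-line-plane characterization of $u_p$ in Theorem \ref{plp-projection-theorem} forces $u_p=0$ on $\partial\Omega$ as well, so $\theta_h := u_h - u_p \in V_0^h$. The first piece is already handled by Theorem \ref{thm-superapproximation}, which gives $\|u-u_p\|_{2,Z_0} = \mathcal O(h^{k+2})\|u\|_{k+2}$. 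The remaining task is to show $\|\theta_h\|_{2,Z_0}=\mathcal O(h^{k+2})(\|u\|_{k+3}+\|f\|_{k+2})$.

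Since $\theta_h$ is piecewise $Q^k$, a scaling and norm-equivalence argument on the reference cell shows $\|\theta_h\|_{2,Z_0}$ is equivalent to $\|\theta_h\|_0$, so it suffices to bound $\|\theta_h\|_0$. To do this I would introduce the discrete dual solution $w_h\in V_0^h$ from \eqref{scheme-dual}, take $v_h=\theta_h$ there, and use
\[
\|\theta_h\|_0^2 = (\theta_h,\theta_h) = A_h^*(w_h,\theta_h) = A_h(\theta_h,w_h).
\]
Using $A_h(u_h,w_h)=\langle f,w_h\rangle_h$ and adding and subtracting $(f,w_h)$, $A_h(u,w_h)$, and $A(u,w_h)=(f,w_h)$, I would decompose
\[
A_h(\theta_h,w_h) = \bigl[\langle f,w_h\rangle_h - (f,w_h)\bigr] + \bigl[A(u,w_h) - A_h(u,w_h)\bigr] + A_h(u-u_p,w_h).
\]

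Now each piece can be bounded at order $h^{k+2}$ thanks to $w_h\in V_0^h$. The first term is $\mathcal O(h^{k+2})\|f\|_{k+2}\|w_h\|_2$ by Theorem \ref{rhs-estimate}; the second is $\mathcal O(h^{k+2})\|u\|_{k+3}\|w_h\|_2$ by Theorem \ref{a-ah}, where the $V_0^h$ membership of $w_h$ is exactly what is needed to pick up the sharp $h^{k+2}$ estimate in \eqref{a-ah-auxvxv0} and \eqref{a-ah-auxvyv0} rather than the $h^{k+3/2}$ versions; and the third is $\mathcal O(h^{k+2})\|u\|_{k+3}\|w_h\|_2$ by the bilinear-form superconvergence \eqref{ellip-u-up-1} together with the cell-by-cell estimates in Lemmas \ref{lemma-bilinear-auxvx}--\ref{lemma-bilinear-auxvy}, again using $w_h\in V_0^h$ to avoid the half-order loss in the cross term. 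Combining these and invoking the dual estimate $\|w_h\|_2 \leq C\|\theta_h\|_0$ from Theorem \ref{elliptic-reg} gives
\[
\|\theta_h\|_0^2 \leq C h^{k+2}(\|u\|_{k+3}+\|f\|_{k+2})\|\theta_h\|_0,
\]
which yields the desired bound for $\|\theta_h\|_0$ and hence for $\|\theta_h\|_{2,Z_0}$.

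The main obstacle is really just bookkeeping: one has to arrange the telescoping so that every remaining term is of a form for which a genuinely sharp $h^{k+2}$ estimate is available, and in particular so that the cross-derivative bilinear form and the quadrature consistency on mixed terms are both evaluated against a test function in $V_0^h$. The hard work in that direction has already been done in Theorem \ref{a-ah} and Lemma \ref{lemma-bilinear-auxvy}, where integration by parts turned boundary line integrals into quantities that vanish when the test function lies in $V_0^h$; the proof of Theorem \ref{mainthm} is then largely a matter of assembling these ingredients.
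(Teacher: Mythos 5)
Your proposal is correct and follows essentially the same route as the paper: the same splitting $u-u_h=(u-u_p)+(u_p-u_h)$ with $\theta_h=u_h-u_p\in V_0^h$, the same duality argument via the discrete dual solution $w_h$ with the identity $\|\theta_h\|_0^2=A_h(\theta_h,w_h)$, the same (algebraically equivalent) telescoping into the quadrature term for $f$, the consistency term for $A$, and the bilinear-form superconvergence term, each bounded by Theorem \ref{rhs-estimate}, Theorem \ref{a-ah}, and \eqref{ellip-u-up-1} respectively, closed by $\|w_h\|_2\leq C\|\theta_h\|_0$ from Theorem \ref{elliptic-reg} and Theorem \ref{thm-superapproximation}. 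No gaps.
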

\begin{proof}
By Theorem \ref{a-ah} and Theorem \ref{rhs-estimate}, for any $v_h \in V^h_0$,
\begin{equation*}
\begin{array}{cl}
&A_h(u-u_h, v_h)=[A(u, v_h)-A_h(u_h, v_h)]+ [A_h(u, v_h) - A(u, v_h)]\\
=& A(u, v_h)-A_h(u_h, v_h)+\mathcal O(h^{k+2}) \|a\|_{k+2,\infty}\|u\|_{k+3} \|v_h\|_2 \\
=& [(f,v_h)-\langle f,v_h\rangle_h] +\mathcal O(h^{k+2}) \|u\|_{k+3} \|v_h\|_2
=\mathcal O(h^{k+2}) (\|u\|_{k+3} +\|f\|_{k+2})\|v_h\|_2.
\end{array}
\end{equation*}
Let $\theta_h=u_h-u_p$, then $\theta_h\in V_0^h$ due to the properties of the M-type projection. So by \eqref{ellip-u-up-1} and Theorem \ref{elliptic-reg}, we get
\begin{align*}
& \|\theta_h\|_0^2=(\theta_h,\theta_h)=A_h(\theta_h,w_h)= A_h(u_h-u, w_h)+A_h(u-u_p, w_h)\\
=&  A_h(u-u_p, w_h)+\mathcal O(h^{k+2})(\|u\|_{k+3} +\|f\|_{k+2})\|w_h\|_2\\
=&\mathcal O(h^{k+2})(\|u\|_{k+3} +\|f\|_{k+2})\|w_h\|_2 
=\mathcal O(h^{k+2})( \|u\|_{k+3} +\|f\|_{k+2})\|\theta_h\|_0,
\end{align*}
thus
\begin{equation*}
\|u_h-u_p\|_0=\|\theta_h\|_0=\mathcal O(h^{k+2})( \|u\|_{k+3} +\|f\|_{k+2}).
\end{equation*}
Finally, by the equivalence of the discrete 2-norm on $Z_0$ and the
$L^2(\Omega)$ norm in finite-dimensional space $V^h$ and Theorem \ref{thm-superapproximation}, we obtain 
\begin{align*}
\|u_h-u\|_{2,Z_0}\leq \|u_h-u_p\|_{2,Z_0}+\|u_p-u\|_{2,Z_0} \leq C \|u_h-u_p\|_{0}+\|u_p-u\|_{2,Z_0} \\
= \mathcal O (h^{k+2})(\|u\|_{k+3}+\|f\|_{k+2}). 
\end{align*}
\end{proof}
\begin{remark}
To extend the discussions to Neumann type boundary conditions, due to \eqref{ellip-u-up-2} and Lemma \ref{a-ah}, one can only prove $(k+\frac32)$-th order accuracy: 
 \[\|u_h-u\|_{2, Z_0}= \mathcal O (h^{k+\frac32})(\|u\|_{k+3}+\|f\|_{k+2}).\] On the other hand, for solving a general elliptic equation, only $\mathcal O(h^{k+\frac32})$ superconvergence at all Lobatto point can be proven for Neumann boundary conditions even for the full finite element scheme \eqref{intro-scheme1}, see \cite{chen2001structure}.
 \label{remark-loss}
 \end{remark}
 \begin{remark}
 All key discussions  can be
 extended to three-dimensional cases. 
 \end{remark}

\section{Nonhomogeneous Dirichlet Boundary Conditions}
\label{sec-nonhomogeneous-dirichlet}

We consider a two-dimensional elliptic problem on $\Omega=(0,1)^2$ with nonhomogeneous Dirichlet boundary condition,
\begin{equation}\label{nonhom-elliptic}
\begin{array}{cl}
 -\nabla ( \mathbf a \nabla u) +\mathbf b\cdot \nabla u+c u & =  f \textrm{ on } \Omega\\
 u &=  g \textrm{ on } \partial\Omega.
 \end{array}
\end{equation}
 Assume there is a function $\bar g \in H^1(\Omega)$ as a smooth extension of $g$ so that $\bar g|_{\partial \Omega} = g$.
 The  variational form is to find $\tilde u = u - \bar g \in H_0^1(\Omega)$ satisfying
\begin{equation}\label{nonhom-var}
 A(\tilde u, v)=(f,v) - A(\bar g,v) ,\quad \forall v\in H_0^1(\Omega).
 \end{equation}
 
 In practice, $\bar g$ is not used explicitly. By abusing notations,
the most convenient implementation is to consider
\[g(x,y)=\begin{cases}
   0,& \mbox{if}\quad (x,y)\in (0,1)\times(0,1),\\
   g(x,y),& \mbox{if}\quad (x,y)\in \partial\Omega,\\
  \end{cases}
\] 
and
$g_I\in V^h$ which is defined as the $Q^k$ Lagrange interpolation at $(k+1)\times (k+1)$ Gauss-Lobatto points for each cell on $  \Omega$ of $g(x,y)$.
Namely, $g_I\in V^h$ is the piecewise $P^k$ interpolation of $g$ along the boundary grid points and $g_I=0$ at the interior grid points. 
The numerical scheme is to  find $\tilde u_h \in V_0^h$, s.t.
\begin{equation}\label{nonhom-var-num3}
A_h( \tilde u_h, v_h)=\langle f,v_h \rangle_h - A_h( g_I,v_h) ,\quad \forall v_h\in V_0^h.
\end{equation}
Then $u_h = \tilde u_h + g_I$ will be our numerical solution for  \eqref{nonhom-elliptic}.
Notice that \eqref{nonhom-var-num3} is not a straightforward approximation to \eqref{nonhom-var} since $\bar g$ is never used. 
Assuming elliptic regularity and $V^h$ ellipticity hold, we will show that $u_h-u$ is of $(k+2)$-th order in the discrete 2-norm over all  $(k+1)\times (k+1)$ Gauss-Lobatto points. 
 
 \subsection{An auxiliary scheme} In order to discuss the superconvergence of \eqref{nonhom-var-num3}, we  need to prove the superconvergence of an auxiliary scheme. Notice that we discuss  the auxiliary scheme only for proving the accuracy of  \eqref{nonhom-var-num3}. In practice one should not implement the auxiliary scheme since  \eqref{nonhom-var-num3} is a much more convenient implementation with the same accuracy. 
 
Let $\bar g_p\in V^h$ be the piecewise M-type $Q^k$ projection of the smooth extension function $\bar g$,  and define $g_p\in V^h$ as $g_p=\bar g_p$ on $\partial \Omega$ and $g_p = 0$ at all the inner grids. 
The  auxiliary scheme is to find $\tilde u^{*}_h \in V_0^h$ satisfying
\begin{equation}\label{nonhom-var-num1}
A_h(\tilde u^{*}_h, v_h)=\langle f,v_h \rangle_h - A_h(g_p,v_h) ,\quad \forall v_h\in V_0^h,
\end{equation}

Then $u^{*}_h = \tilde u^{*}_h + g_p$ is the numerical solution of scheme \eqref{nonhom-var-num1} for problem \eqref{nonhom-var}.
Define $\theta_h=u^{*}_h-u_p$, then 
by Theorem \ref{plp-projection-theorem} we have $\theta_h \in V_0^h$.
Following Section \ref{dualpro}, define the following dual problem:
  find $w\in H_0^1(\Omega)$ satisfying 
\begin{equation}
A^*(w,v)=(\theta_h,v),\quad \forall v\in H_0^1(\Omega).
\label{eqn-dual-2}
\end{equation}
 Let $w_h\in V_0^h$ be the solution to
\begin{equation}
A^*_h(w_h,v_h)=(\theta_h,v_h),\quad \forall v_h\in V_0^h.
\label{scheme-dual-2}
\end{equation}
Notice that the dual problem has homogeneous Dirichlet boundary conditions.
By Theorem \ref{a-ah}, Theorem \ref{rhs-estimate}, for any $v_h \in V^h_0$,
\begin{equation*}
\begin{array}{cl}
&A_h(u-u^{*}_h, v_h)= [A(u, v_h)-A_h(u^{*}_h, v_h)]+ [A_h(u, v_h) - A(u, v_h)]\\
=& A(u, v_h)-A_h(u^{*}_h, v_h)+\mathcal O(h^{k+2}) \|a\|_{k+2,\infty}\|u\|_{k+3} \|v_h\|_2 \\
=& [(f,v_h)-\langle f,v_h\rangle_h] +\mathcal O(h^{k+2}) \|u\|_{k+3} \|v_h\|_2
=\mathcal O(h^{k+2}) (\|u\|_{k+3} +\|f\|_{k+2})\|v_h\|_2.
\end{array}
\end{equation*}
By \eqref{ellip-u-up-1} and Theorem \ref{elliptic-reg}, we get
\begin{align*}
& \|\theta_h\|_0^2=(\theta_h,\theta_h)=A_h(\theta_h,w_h)
= A_h(u^{*}_h-u, w_h)+A_h(u-u_p, w_h)\\
=&  A_h(u-u_p, w_h)+\mathcal O(h^{k+2})(\|u\|_{k+3} +\|f\|_{k+2})\|w_h\|_2\\
=&\mathcal O(h^{k+2})(\|u\|_{k+3} +\|f\|_{k+2})\|w_h\|_2 =\mathcal O(h^{k+2})( \|u\|_{k+3} +\|f\|_{k+2})\|\theta_h\|_0,
\end{align*}
thus $
\|u_h^{*}-u_p\|_0=\|\theta_h\|_0=\mathcal O(h^{k+2})( \|u\|_{k+3} +\|f\|_{k+2}).$
So Theorem \ref{mainthm} still holds for the auxiliary scheme \eqref{nonhom-var-num1}:
\begin{equation}
\label{estimate-aux-scheme-1}
 \|u^*_h-u\|_{2,Z_0}=\mathcal O(h^{k+2})(\|u\|_{k+3}+\|f\|_{k+2}). 
\end{equation}

\subsection{The main result}
In order to extend Theorem \ref{mainthm} to \eqref{nonhom-var-num3}, we only need to prove 
\[\|u_h-u^*_h\|_0=\mathcal O(h^{k+2}).\]
The difference between \eqref{nonhom-var-num1} and \eqref{nonhom-var-num3} is 
\begin{equation}\label{Ah-u1-u2}
A_h(\tilde u^{*}_h- \tilde u_h, v_h)= A_h( g_I-g_p,v_h) ,\quad \forall v_h\in V_0^h.
\end{equation}
We need the following Lemma.

\begin{lemma}
\label{lemma-boundary}
Assuming $u\in H^{k+4}(\Omega)$, then we have
\begin{equation} 
A_h( g_I-g_p,v_h) = \mathcal O(h^{k+2})\|u\|_{k+4,\Omega}\|v_h\|_{2,\Omega},\quad \forall v_h\in V_0^h.
\end{equation}
\end{lemma}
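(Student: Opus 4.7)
The plan is to exploit the fact that $\eta:=g_I-g_p$ vanishes at every interior Gauss--Lobatto node of the mesh, so $\eta$ is supported only in the single layer of cells adjacent to $\partial\Omega$ and has a rigid tensor-product structure on each such cell. On a cell $e$ with a single edge $\pi_e$ on $\partial\Omega$---say the bottom edge $t=-1$ in reference variables (the three other orientations and the corner cells are handled by identical arguments)---expansion in the tensor-product Lagrange basis gives
\[
\hat\eta(s,t)=\varphi(s)\,\phi_0^k(t),
\]
where $\phi_0^k$ is the 1D Lagrange basis at $t=-1$ and $\varphi\in P^k([-1,1])$ vanishes at $s=\pm 1$. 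At each interior Gauss--Lobatto node $s_i$ on $\pi_e$, $\varphi(s_i)$ equals the 1D M-type projection remainder of $g=u|_{\pi_e}$, so by the 1D analogue of Theorem~\ref{thm-superapproximation}, norm equivalence on $P^k([-1,1])$, the trace theorem applied to $u$, and the 2D Sobolev embedding $H^{k+4}(\Omega)\hookrightarrow W^{k+2,\infty}(\Omega)$, I obtain the cellwise bound $\|\varphi\|_{L^\infty[-1,1]}+\|\varphi'\|_{L^2[-1,1]}\le C h^{k+2}\|u\|_{k+4}$; this is where the norm $\|u\|_{k+4}$ enters.

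Next I expand $A_{h,e}(\eta,v_h)$ into its nine quadrature sub-sums and prune them using two observations: first, $\phi_0^k(t_j)=0$ for $j\ge 1$ kills $\partial_s\hat\eta$ and $\hat\eta$ at every quadrature row off $t=-1$; second, $\hat v_h(\cdot,-1)\equiv 0$ (because $v_h\in V_0^h$), hence $\partial_s\hat v_h(\cdot,-1)\equiv 0$. Only four non-vanishing term-types remain: a boundary-row cross term $\langle a_{12}\partial_s\hat\eta,\partial_t\hat v_h\rangle$ at $t=-1$, the principal normal term $\langle a_{22}\partial_t\hat\eta,\partial_t\hat v_h\rangle$ over all $(s_i,t_j)$, a cross term $\langle a_{21}\partial_t\hat\eta,\partial_s\hat v_h\rangle$ restricted to $j\ge 1$, and the analogous lower-order $b^2$ term at $j\ge 1$. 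The delicate ones are those in which $\partial_t\hat v_h$ appears on the boundary row; a discrete Cauchy--Schwarz in $s$ bounds them cellwise by $Ch^{k+2}\|u\|_{k+4}\,\|\partial_t\hat v_h(\cdot,-1)\|_{L^2[-1,1]}$. Summing this in $\ell^1$ over the $\mathcal{O}(h^{-1})$ boundary cells would only yield $h^{k+3/2}$, so the decisive step is to sum in $\ell^2$ via the scaling identity
\[
\sum_e \|\partial_t\hat v_h(\cdot,-1)\|_{L^2[-1,1]}^2 \;=\; h\,\|\partial_y v_h\|_{L^2(\Gamma)}^2 \;\le\; Ch\,\|v_h\|_{H^2(\Omega)}^2,
\]
where $\Gamma$ is the portion of $\partial\Omega$ meeting the cells under consideration and the last inequality is the continuous trace applied to $\partial_y v_h\in H^1(\Omega)$; the two factors $h^{\pm 1/2}$ cancel exactly, leaving the desired bound $Ch^{k+2}\|u\|_{k+4}\|v_h\|_2$.

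For the $j\ge 1$ slice of the $a_{22}$-term, I use the Taylor identity $\partial_t\hat v_h(s_i,t_j)=\partial_t\hat v_h(s_i,-1)+\int_{-1}^{t_j}\partial_\tau^2\hat v_h(s_i,\tau)\,d\tau$: the first summand telescopes with the $j=0$ slice to complete the full 1D $t$-quadrature of $\hat a_{22}(\phi_0^k)'$, reducing it to the trace estimate above, while the integral remainder is dominated by $|v_h|_{2,e}$ and contributes only $\mathcal{O}(h^{k+5/2})\|v_h\|_2$. The cross term $\langle a_{21}\partial_t\hat\eta,\partial_s\hat v_h\rangle$ and the $b^2$ term are treated by the analogous identities $\partial_s\hat v_h(s,t_j)=\int_{-1}^{t_j}\partial_{s\tau}\hat v_h\,d\tau$ and $\hat v_h(s,t_j)=\int_{-1}^{t_j}\partial_\tau\hat v_h\,d\tau$, both of which are available because $\hat v_h(\cdot,-1)\equiv 0$; each identity supplies an extra factor of $h$ and the corresponding contributions are absorbed as $\mathcal{O}(h^{k+5/2})$. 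The main obstacle I anticipate is bookkeeping: the telescoping between the boundary-row slice and the interior slice of the $a_{22}$-term has to assemble cleanly into the full 1D $t$-quadrature so that precisely the structure required by the boundary trace inequality is recovered, and corner cells will require writing $\hat\eta$ as a sum of two tensor products plus a single diagonal correction at the corner node, to which the same machinery applies.
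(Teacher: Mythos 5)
Your structural analysis of $\eta=g_I-g_p$ on a boundary cell (the tensor-product form $\varphi(s)\phi_0^k(t)$ with $\varphi(\pm1)=0$, the cellwise bound $\|\varphi\|_\infty+\|\varphi'\|_{L^2}\le Ch^{k+2}\|u\|_{k+4}$ via Theorem~\ref{thm-superapproximation}, and the pruning of the nine quadrature sub-sums using $\phi_0^k(t_j)=0$ for $j\ge1$ and $\hat v_h(\cdot,-1)\equiv0$) is correct and in fact sharper than what the paper does: the paper simply expands $g_I-g_p$ in the nodal basis, bounds all coefficients by $\|u-u_p\|_{\infty,Z_0}=\mathcal O(h^{k+2})\|u\|_{k+4}$, and controls each sub-form per cell by norm equivalence on $Q^k(\hat K)$ before summing. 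You have also correctly identified the real difficulty, namely that an $\ell^1$ sum of per-cell bounds over the $\mathcal O(h^{-1})$ boundary cells only yields $h^{k+3/2}$.

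However, your ``decisive step'' does not hold. You invoke $\|\partial_y v_h\|_{L^2(\Gamma)}^2\le C\|v_h\|_{H^2(\Omega)}^2$ as ``the continuous trace applied to $\partial_y v_h\in H^1(\Omega)$,'' but $v_h\in V^h$ is only $C^0$ across element interfaces, so its normal derivative $\partial_y v_h$ jumps across the horizontal interelement edges and is \emph{not} in $H^1(\Omega)$; the norm $\|v_h\|_{2,\Omega}$ in this paper is the broken norm $\bigl(\sum_e\|v_h\|_{2,e}^2\bigr)^{1/2}$, and the trace inequality fails for it. Concretely, take $v_h(x,y)=\mu(x)\tau(y)\in V_0^h$ with $\mu$ a fixed conforming profile vanishing at $x=0,1$ and $\tau$ the continuous piecewise-linear hat with $\tau(0)=0$, $\tau(2h)=1$, $\tau(4h)=0$, $\tau\equiv0$ above (each piece is linear in $y$, hence in $Q^2$ on each cell). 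Then $\partial_{yy}v_h\equiv0$ elementwise, so
\begin{equation*}
\|\partial_y v_h\|_{L^2(L_3)}\sim h^{-1},\qquad \|v_h\|_{2,\Omega}\sim \bigl(|v_h|_1^2+\textstyle\sum_e|v_h|_{2,e}^2\bigr)^{1/2}\sim h^{-1/2},
\end{equation*}
and the ratio blows up like $h^{-1/2}$. With only the legitimate elementwise polynomial trace inequality (the analogue of \eqref{trace-polynomial} for $\hat v_t$), your chain delivers exactly the $\mathcal O(h^{k+3/2})$ you set out to beat, so the $h^{\pm1/2}$ cancellation you rely on does not occur and the argument does not close. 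To repair it you would need genuinely more smallness per boundary cell than the uniform $h^{k+2}$ bound on $\varphi$ --- e.g.\ additional cancellation in the boundary-row sums, or an $\ell^2$-along-the-boundary refinement of the nodal superconvergence estimate --- none of which is supplied in the proposal.
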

\begin{proof}
For simplicity, we ignore the subscript $_h$ of $v_h$ in this proof and all the following $v$ are in $V^h$.

Notice that $g_I-g_p\equiv 0$ in interior cells. Thus we only consider cells adjacent to $\partial \Omega$. Let $L_1, L_2, L_3$ and $L_4$ denote the top, left, bottom and right boundary edges of $\bar \Omega=[0,1]\times [0,1]$ respectively. Without loss of generality, we consider cell $e=[x_e-h,x_e+h]\times[y_e-h,y_e+h]$ adjacent to the left boundary $L_2$, i.e., $x_e-h=0$. Let $l^e_1, l^e_2, l^e_3$ and $l^e_4$ denote the top, left, bottom and right boundary edges of $e$ respectively.

On $l_2\subset L_2$,  Let $\phi_{ij}(x,y),i,j=0,1,\dots,k,$ be Lagrange basis functions on edge $l_2^e$ for the $(k+1)\times (k+1)$ Gauss-Lobatto points in cell $e$. Then $g_I-g_p = \sum_{i,j=0}^{k}\lambda_{ij} \phi_{ij}(x,y)$ and $|\lambda_{ij}|\leq \|g_I-g_p\|_{\infty,Z_0}$.  Due to Sobolev's embedding, we have $u\in W^{k+2,\infty}(\Omega)$. By Theorem \ref{thm-superapproximation}, we have 
\begin{align*}
\|g_I-g_p\|_{\infty,Z_0} \leq \|u-u_p\|_{\infty,Z_0}= \mathcal{O}(h^{k+2})\|u\|_{k+2,\infty,\Omega} = \mathcal{O}(h^{k+2})\|u\|_{k+4,\Omega}.
\end{align*}

Thus we get $\forall v \in V^h_0$,
\begin{align*}
\langle a (g_I-g_p)_x, v_x\rangle_e= \langle a \sum_{i,j=0}^k\lambda_{ij} \phi_{ij}(x,y)_x, v_x\rangle_e \leq C\|{a}\|_{\infty,\Omega}\max_{i,j}|\lambda_{ij}||\langle  \sum_{i,j=0}^k\phi_{ij}(x,y)_x, v_x\rangle_e |.
\end{align*}
Since for polynomials on $\hat K$ all the norm are equivalent, we have 
\begin{align*}
|\langle  \sum_{i,j=0}^k\phi_{ij}(x,y)_x, v_x\rangle_e | = |\langle  \sum_{i,j=0}^k\hat \phi_{ij}(s,t)_s, \hat v_s\rangle_{\hat K} | \leq C |\hat v_s|_{\infty,\hat K} \leq C| v|_{1,\hat K} = C |v|_{1,e},
\end{align*}
which implies
\begin{align*}
\langle a (g_I-g_p)_x, v_x\rangle_h\leq   C\|{a}\|_{\infty,\Omega}\sum_e \max_{i,j}|\lambda_{ij}||v|_{1,e}= \mathcal O(h^{k+2})\|{a}\|_{\infty,\Omega} \|u\|_{k+4,\Omega} \|v\|_{2,\Omega}
\end{align*}
Similarly, for any $v \in V^h_0$, we have
\begin{eqnarray*}
\langle a (g_I-g_p)_y, v_y\rangle_h = &&\mathcal O(h^{k+2})\|{a}\|_{\infty} \|u\|_{k+4} \|v\|_{2},\\
\langle a (g_I-g_p)_x, v_y\rangle_h = &&\mathcal O(h^{k+2})\|{a}\|_{\infty} \|u\|_{k+4} \|v\|_{2},\\
\langle \textbf{b} \cdot \nabla(g_I-g_p), v\rangle_h = &&\mathcal O(h^{k+2})\|\mathbf {b}\|_{\infty} \|u\|_{k+4} \|v\|_{2},\\
\langle c(g_I-g_p), v\rangle_h = &&\mathcal O(h^{k+2})\|{c}\|_{\infty} \|u\|_{k+4} \|v\|_{2}.
\end{eqnarray*}

Thus we conclude that 
\begin{equation*} 
A_h( g_I-g_p,v_h) = \mathcal O(h^{k+2})\|u\|_{k+4}\|v_h\|_{2},\quad \forall v_h\in V_0^h.
\end{equation*}
\end{proof}

By \eqref{Ah-u1-u2} and Lemma \ref{lemma-boundary}, we have 
\begin{equation}
\label{d-boundary-esimate}
A_h(\tilde u^{*}_h- \tilde u_h, v_h) = \mathcal O(h^{k+2})\|u\|_{k+4}\|v_h\|_{2},\quad \forall v_h\in V_0^h.
\end{equation}
Let $\theta_h=\tilde u^{*}_h- \tilde u_h \in V_0^h$. 
Following Section \ref{dualpro}, define the following dual problem:
  find $w\in H_0^1(\Omega)$ satisfying 
\begin{equation}
A^*(w,v)=(\theta_h,v),\quad \forall v\in H_0^1(\Omega).
\label{eqn-dual-3}
\end{equation}
 Let $w_h\in V_0^h$ be the solution to
\begin{equation}
A^*_h(w_h,v_h)=(\theta_h,v_h),\quad \forall v_h\in V_0^h.
\label{scheme-dual-3}
\end{equation} 
By \eqref{d-boundary-esimate} and Theorem \ref{elliptic-reg}, we get
\[\|\theta_h\|_0^2=(\theta_h,\theta_h)=A_h^*(w_h,\theta_h)
= A_h(\tilde u^{*}_h- \tilde u_h, w_h)=\mathcal O(h^{k+2})\|u\|_{k+4}\|w_h\|_{2}
=\mathcal O(h^{k+2})\|u\|_{k+4}\|\theta_h\|_{0},
\]
thus $
\|\tilde u^{*}_h- \tilde u_h\|_0=\|\theta_h\|_0=\mathcal O(h^{k+2}) \|u\|_{k+4}.$
By equivalence of norms for polynomials, we have
\begin{equation}\label{inner}
\|\tilde u^{*}_h- \tilde u_h\|_{2, Z_0}\leq C \|\tilde u^{*}_h- \tilde u_h\|_{0} = \mathcal O(h^{k+2})\|u\|_{k+4,\Omega}.
\end{equation}

Notice that both $\tilde u_h$ and $\tilde u^*_h$ are constant zero along $\partial\Omega$, and $u_h|_{\partial \Omega}=g_I$ is the Lagrangian interpolation of $g$ along $\partial\Omega$.  With \eqref{estimate-aux-scheme-1},  we have proven the following main result. 
\begin{theorem}
For a nonhomogeneous Dirichlet boundary problem \eqref{nonhom-elliptic},  with suitable smoothness assumptions $a_{ij}, b_i, c \in W^{k+2,\infty}(\Omega)$, $u(x,y)\in H^{k+4}(\Omega)$ and $f(x,y)\in H^{k+2}(\Omega)$, the numerical solution $u_h$ by scheme \eqref{nonhom-var-num3} is a $(k+2)$-th order accurate approximation to $u$ in the discrete 2-norm over all the $(k+1)\times (k+1)$ Gauss-Lobatto points:
 \[\|u_h-u\|_{2, Z_0}= \mathcal O (h^{k+2})(\|u\|_{k+4}+\|f\|_{k+2}).\] 

\end{theorem}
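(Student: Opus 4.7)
The plan is to reduce the nonhomogeneous problem to the already-analyzed homogeneous case via the auxiliary scheme \eqref{nonhom-var-num1}, and then control the perturbation caused by using the Lagrangian interpolant $g_I$ rather than the M-type projection $g_p$ for the boundary lift. All the essential pieces sit immediately above the theorem statement, so the proof will be an assembly argument.

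First, I would transfer Theorem \ref{mainthm} to the auxiliary solution $u^*_h = \tilde u^*_h + g_p$. Because $g_p$ vanishes at all interior Gauss-Lobatto points and on $\partial\Omega$ coincides with the trace of the M-type projection, $\theta_h := u^*_h - u_p$ lies in $V_0^h$ by Theorem \ref{plp-projection-theorem}. The verbatim Aubin-Nitsche duality argument of Theorem \ref{mainthm} --- combining Theorems \ref{a-ah} and \ref{rhs-estimate} for consistency, \eqref{ellip-u-up-1} for superconvergence of the bilinear form, and Theorem \ref{elliptic-reg} for the dual problem --- produces \eqref{estimate-aux-scheme-1}, namely $\|u^*_h - u\|_{2, Z_0} = \mathcal{O}(h^{k+2})(\|u\|_{k+3} + \|f\|_{k+2})$.

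Next, subtracting \eqref{nonhom-var-num1} from \eqref{nonhom-var-num3} yields
\begin{equation*}
A_h(\tilde u^*_h - \tilde u_h, v_h) = A_h(g_I - g_p, v_h),\quad\forall v_h \in V_0^h.
\end{equation*}
I would invoke Lemma \ref{lemma-boundary} to bound the right side by $\mathcal{O}(h^{k+2})\|u\|_{k+4}\|v_h\|_2$, then run Aubin-Nitsche on the dual problem \eqref{eqn-dual-3}--\eqref{scheme-dual-3} to obtain $\|\tilde u^*_h - \tilde u_h\|_0 = \mathcal{O}(h^{k+2})\|u\|_{k+4}$. Equivalence of $\|\cdot\|_0$ and $\|\cdot\|_{2, Z_0}$ on the finite-dimensional polynomial space then upgrades this to an estimate in the discrete $2$-norm.

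Finally, I would note that $u_h = g_I = g = u$ pointwise at every Gauss-Lobatto point on $\partial\Omega$, while at every interior Gauss-Lobatto point both $g_I$ and $g_p$ vanish so that $u_h - u^*_h = \tilde u_h - \tilde u^*_h$ there. The triangle inequality then gives
\begin{equation*}
\|u_h - u\|_{2, Z_0} \leq \|\tilde u^*_h - \tilde u_h\|_{2, Z_0} + \|u^*_h - u\|_{2, Z_0} = \mathcal{O}(h^{k+2})(\|u\|_{k+4} + \|f\|_{k+2}).
\end{equation*}
The main obstacle is Lemma \ref{lemma-boundary}. The key observation underlying it is that $g_I - g_p$ is supported in the one-cell-thick boundary layer and obeys $\|g_I - g_p\|_{\infty, Z_0} \leq \|u - u_p\|_{\infty, Z_0} = \mathcal{O}(h^{k+2})\|u\|_{k+2, \infty}$ via Theorem \ref{thm-superapproximation}, after invoking the Sobolev embedding $H^{k+4}(\Omega) \hookrightarrow W^{k+2,\infty}(\Omega)$; expanding in the Lagrange basis on each boundary cell and using equivalence of norms on the reference cell then translates this pointwise bound into the required $\mathcal{O}(h^{k+2})$ estimate on each summand of $A_h(g_I - g_p, v_h)$. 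This step also explains why the stronger regularity assumption $u \in H^{k+4}(\Omega)$ appears here compared with the $H^{k+3}$ hypothesis of Theorem \ref{mainthm}.
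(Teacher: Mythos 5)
Your proposal is correct and follows essentially the same route as the paper: the auxiliary scheme with the M-type boundary lift $g_p$, the duality argument transferring Theorem \ref{mainthm} to $u^*_h$, the perturbation bound $A_h(g_I-g_p,v_h)=\mathcal O(h^{k+2})\|u\|_{k+4}\|v_h\|_2$ from Lemma \ref{lemma-boundary}, and a second Aubin--Nitsche step followed by the triangle inequality. Your sketch of Lemma \ref{lemma-boundary}, including the use of Theorem \ref{thm-superapproximation} with the Sobolev embedding to explain the $H^{k+4}$ hypothesis, matches the paper's argument.
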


 \section{Finite difference implementation}
\label{sec-fd}
In this section we present the finite difference implementation of the scheme \eqref{nonhom-var-num3} for the case $k=2$ on a uniform mesh. The finite difference implementation of the nonhomogeneous Dirichlet boundary value problem is based on a homogeneous Neumann boundary value problem, which will be discussed first.
We demonstrate how it is derived for the one-dimensional case then give the two-dimensional implementation. 
 It provides efficient assembling of the stiffness matrix and one can easily implement it in MATLAB. 
 Implementations for higher order elements or quasi-uniform meshes can be similarly derived, even though it will no longer be a conventional finite difference scheme on a uniform grid. 
\subsection{One-dimensional case}
Consider a homogeneous Neumann boundary value problem
$
-(au')' =  f \textrm{ on } [0,1], u'(0) = 0,   u'(1) =  0,
$
and its variational form is to seek $u\in H^1([0,1])$ satisfying
\begin{align}
\label{1d-homo-neumann}
(au',v')=(f,v), \quad \forall v\in H^1([0,1]).
\end{align}
Consider a uniform mesh $x_i = ih$, $i = 0,1,\dots, n+1 $, $h=\frac{1}{n+1}$. Assume $n$ is odd and let $N=\frac{n+1}{2}$. Define intervals $I_k =[x_{2k},x_{2k+2}]$ for $k=0,\dots,N-1$ as a finite element mesh for $P^2$ basis. Define 
$$V^h=\{v\in C^0([0,1]): v|_{I_k}\in P^2(I_k), k = 0,\dots, N-1\}.$$ Let $\{v_i\}_{i=0}^{n+1} 	\subset V^h $ be a basis of $V^h$ such that $v_i(x_j)= \delta_{ij}, \,i,j=0,1,\dots,n+1$.
With $3$-point Gauss-Lobatto quadrature, the $C^0$-$P^2$ finite element method for \eqref{1d-homo-neumann} is to seek $u_h\in V^h$ satisfying  
\begin{align}
\label{1D-scheme-neumann}
\langle au_h',v_i'\rangle_h=\langle f,v_i\rangle_h, \quad i=0,1,\dots,n+1.
\end{align}

Let $u_j=u_h(x_j)$, $a_j=a(x_j)$ and $f_j=f(x_j)$ then $u_h(x)=\sum\limits_{j=0}^{n+1} u_jv_j(x)$. We have 
$$\sum_{j=0}^{n+1} u_j \langle a v_j',v_i'\rangle_h =\langle au_h',v_j'\rangle_h = \langle f,v_i\rangle_h=\sum_{j=0}^{n+1} f_j \langle  v_j,v_i\rangle_h , \quad i=0,1,\dots,n+1.$$
The matrix form of this scheme is $\bar S\bar{ \mathbf u}=\bar M \bar{\mathbf f}$,
where \begin{align*}
\bar{\textbf{u}}=\begin{bmatrix}
u_0,u_1,\dots,u_{n},u_{n+1}
\end{bmatrix}^T,\quad \bar{\textbf{f}}=\begin{bmatrix}
f_0,f_1,\dots,f_{n},f_{n+1}
\end{bmatrix}^T,
\end{align*}
the stiffness matrix $\bar S$ is has size $(n+2)\times(n+2)$ with $(i,j)$-th entry as $\langle a v_i',v_j'\rangle_h$, and
the lumped mass matrix $M$ is a $(n+2)\times(n+2)$ diagonal matrix
with diagonal entries $h \begin{pmatrix}
\frac13,\frac43,\frac23,\frac43,\frac23,\dots,\frac23,\frac43,\frac13
\end{pmatrix}$.

 Next we derive an explicit representation of the   matrix $\bar S$.
 Since basis functions $v_i\in V^h$ and $u_h(x)$ are not $C^1$ at the knots $x_{2k}$ ($k=1,2,\dots,N-1$), their derivatives at the knots are double valued. 
We will use superscripts $+$ and $-$ to denote derivatives obtained from the right and from the left respectively,
e.g., $v'^+_{2k}$ and $v'^-_{2k+2}$ denote the derivatives of $v_{2k}$ and $v_{2k+2}$ respectively in the interval $I_{k}=[x_{2k}, x_{2k+2}]$. Then in the interval $I_{k}=[x_{2k}, x_{2k+2}]$ we have the following representation of derivatives
 \begin{equation}
\begin{bmatrix} 
v'^+_{2k}(x)\\
v'_{2k+1}(x)\\
v'^-_{2k+2}(x)
\end{bmatrix}
= \frac{1}{2h}\begin{bmatrix}
-3 & 4 & -1\\
-1 & 0 & 1\\
1 & -4 & 3
\end{bmatrix}  
\begin{bmatrix}
 v_{2k}(x)\\
v_{2k+1}(x)\\
v_{2k+2}(x)
\end{bmatrix}.
\label{local-differentiation}  
 \end{equation}

By abusing notations, we use $(v_i)'_{2k}$ to denote the average of two derivatives of $v_i$ at the knots $x_{2k}$:
$$(v_i)'_{2k} = \frac12[(v_i')_{2k}^-+(v_i')^+_{2k}]. $$
Let $[v_i]$ denote the difference between the right derivative and left derivative:
$$[v_i']_0=[v_i']_{n+2}=0, \quad [v_i']_{2k}: = (v_i')^+_{2k}-(v_i')^-_{2k}, \quad k=1,2,\dots,N-1.$$
Then at the knots, we have
\begin{equation}
(v_i')^-_{2k} (v_j')^-_{2k}+(v_i')^+_{2k} (v_j')^+_{2k} =2(v_i')_{2k} (v_j')_{2k}+ \frac12[v_i]_{2k} [v_j]_{2k}.
\label{fd-eqn-1}
\end{equation}
 We also have
 \begin{equation}
 \langle a v_j',v_i'\rangle_{I_{2k}} =h\left[\frac13a_{2k}(v_j')^+_{2k}(v_i')^+_{2k}+\frac43a_{2k+1}(v_j')_{2k+1}(v_i')_{2k+1}+\frac13 a_{2k+2} (v_j')^-_{2k+2}(v_i')^-_{2k+2}\right].
\label{fd-eqn-2}
\end{equation}
 Let $\mathbf v_i$ denote a column vector of size $n+2$ consisting of grid point values of $v_i(x)$. Plugging \eqref{fd-eqn-1} into \eqref{fd-eqn-2}, with \eqref{local-differentiation}, we get 
 $$\langle a v_j',v_i'\rangle_h =\sum_{k=0}^{N-1} \langle a v_j',v_i'\rangle_{I_{2k}}= \frac{1}{h} \textbf{v}_i^T ( D^T WA D + E^T WA E)\textbf{v}_j,$$
 where $A$ is a diagonal matrix with diagonal entries $a_0,a_1,\dots,a_{n},a_{n+1}$, 
and
  \begin{align*}
  W=&diag\begin{pmatrix}
\frac13,\frac43,\frac23,\frac43,\frac23,\dots,\frac23,\frac43,\frac13
\end{pmatrix}_{(n+2)\times (n+2)},\\
D=&\frac{1}{2}\left(\begin{smallmatrix}
-3 & 4 & -1 & & & & & & \\
-1 & 0 & 1 & & & & & & \\
\frac12 & -2 & 0 & 2 & -\frac12 & & &  & &\\
& & -1 & 0 & 1 & & &   \\
& & \frac12 & -2 & 0 & 2 & -\frac12 & & \\
& & & & -1 & 0 & 1 &\\
& & & & & \ddots & \ddots & \ddots &  & \\
& &  & & & -1 & 0 & 1 &\\
& & & & &  \frac12 & -2 & 0 & 2 & -\frac12\\
& & & & & & & -1 & 0 & 1\\
& & & & & & & 1 & -4 & 3
\end{smallmatrix}\right)_{(n+2)\times (n+2)}, 
E=\frac{1}{2} \left(\begin{smallmatrix}
0 & 0 & 0 & & & & & & \\
0 & 0 & 0 & & & & & & \\
-\frac12 & 2 & -3 & 2 & -\frac12 & & &  & &\\
& & 0 & 0 & 0 & & &   \\
& & -\frac12 & 2 & -3 & 2 & -\frac12 & & \\
& & & & 0 & 0 & 0 &\\
& & & & & \ddots & \ddots & \ddots &  & \\
& &  & & & 0 & 0 & 0 &\\
& & & & &  -\frac12 & 2 & -3 & 2 & -\frac12\\
& & & & & & & 0 & 0 & 0\\
& & & & & & & 0 & 0 & 0
\end{smallmatrix}\right)_{(n+2)\times (n+2)}.
 \end{align*}
 Since $\{v_i\}_{i=0}^{n}$ are the Lagrangian basis for $V^h$, we have 
 \begin{equation}
\bar S =  \frac{1}{h}(D^T W A D + E^T W A E).
\label{1d-neumann-s-matrix}
 \end{equation}

Now consider the one-dimensional Dirichlet boundary value problem:
\begin{align*}
-(au')' = & f \textrm{ on } [0,1], \\
 u(0) = \sigma_1,  \quad & u(1) =  \sigma_2.
\end{align*}
Consider the same mesh as above and define 
$$V^h_0=\{v\in C^0([0,1]): v|_{I_k}\in P^2(I_k), k = 0,\dots, N-1; v(0)=v(1)=0\}.$$ Then $\{v_i\}_{i=1}^{n} 	\subset V^h $ is a basis of $V^h_0$ for $\{v_i\}_{i=0}^{n+1}$ defined above.
The one-dimensional version of \eqref{nonhom-var-num3} is to seek $u_h\in V^h_0$ satisfying  
\begin{equation}
 \begin{split}
\langle au_h',v_i'\rangle_h&=\langle f,v_i\rangle_h-\langle a g_I',v_i'\rangle_h, \quad i=1,2,\dots,n,\\
g_I(x)&=\sigma_0 v_0(x)+\sigma_1 v_{n+1}(x).
\end{split}
\label{1D-scheme-Dirichlet}
\end{equation}
Notice that we can obtain \eqref{1D-scheme-Dirichlet} by simply setting $u_h(0)=\sigma_0$ and $u_h(1)=\sigma_1$ in \eqref{1D-scheme-neumann}. So the finite difference implementation of \eqref{1D-scheme-Dirichlet} is given as follows:
\begin{enumerate}
 \item Assemble the  $(n+2)\times(n+2)$ stiffness matrix $\bar S$ for homogeneous Neumann problem as in \eqref{1d-neumann-s-matrix}. 
 \item Let $ S$ denote the $n\times n$ submatrix $\bar S(2:n+1, 2:n+1)$, i.e., $[\bar S_{ij}]$ for $i,j=2,\cdots, n+1$. 
 \item Let $\mathbf l$ denote the $n\times 1$ submatrix $\bar S(2:n+1, 1)$ and $\mathbf r$ denote the $n\times 1$ submatrix $\bar S(2:n+1, n+2)$,
 which correspond to $v_0(x)$ and $v_{n+1}(x)$. 
 \item Let $\mathbf u=\begin{bmatrix}
                       u_1 & u_2 &\cdots & u_n
                      \end{bmatrix}^T$
            and $\mathbf f=\begin{bmatrix}
                       f_1 & f_2 &\cdots & f_n
                      \end{bmatrix}^T$.
   Define $\mathbf w=\begin{bmatrix}
                      \frac43,\frac23,\frac43,\frac23,\dots,\frac23,\frac43
                     \end{bmatrix}
$ as a column vector of size $n$.                    
The scheme \eqref{1D-scheme-Dirichlet} can be implemented as 
 \[S \mathbf u= h \mathbf w^T\mathbf f -\sigma_0\mathbf l-\sigma_1\mathbf r.\]
\end{enumerate}

\subsection{Notations and tools for the two-dimensional case}

We will need two operators:
\begin{itemize}
 \item Kronecker product of two matrices: if $A$ is $m \times n$ and $B$ is $p\times q$, then
 $A\otimes B$ is $mp\times nq$ give by
 \[A\otimes B=\begin{pmatrix}
                  a_{11}B & \cdots & a_{1n} B\\
                  \vdots & \vdots & \vdots\\
                  a_{m1}B & \cdots & a_{mn} B
                 \end{pmatrix}.
\]
\item For a $m\times n$ matrix $X$, $vec(X)$ denotes  the vectorization of the matrix $X$ by rearranging $X$ into a vector column by column.\end{itemize}
The following properties will be used:
\begin{enumerate}
 \item $(A \otimes B)(C \otimes D)=AC \otimes BD$.
 \item $(A \otimes B)^{-1}=A^{-1}\otimes B^{-1}$.
 \item $(B^T\otimes A)vec(X)=vec(AXB)$.
 \item $(A \otimes B)^T=A^T \otimes B^T.$
\end{enumerate}

Consider a uniform grid $(x_i,y_j)$ for a rectangular domain $\bar \Omega=[0,1]\times[0,1]$
where
$x_i = ih_x$, $i = 0,1,\dots, n_x+1$, $h_x=\frac{1}{n_x+1}$
and
$y_j = jh_y$, $j = 0,1,\dots, n_y+1$, $h_y=\frac{1}{n_y+1}$. 

Assume $n_x$ and $n_y$ are odd and let $N_x=\frac{n_x+1}{2}$ and 
$N_y=\frac{n_y+1}{2}$. We consider rectangular cells $e_{kl} =[x_{2k},x_{2k+2}]\times
[y_{2l},y_{2l+2}]$ for $k=0,\dots,N_x-1$ and $l=0,\dots,N_y-1$ as a finite element mesh for $Q^2$ basis. Define 
$$V^h=\{v\in C^0(\Omega): v|_{e_{kl}}\in Q^2(e_{kl}), k = 0,\dots, N_x-1, l = 0,\dots, N_y-1 \},$$ 
$$V^h_0=\{v\in C^0(\Omega): v|_{e_{kl}}\in Q^2(e_{kl}), k = 0,\dots, N_x-1, l = 0,\dots, N_y-1; v|_{\partial \Omega}\equiv 0 \}.$$

For the coefficients $\mathbf a(x,y)=\begin{pmatrix}
               a^{11} & a^{12}\\
               a^{21} & a^{22}
              \end{pmatrix}
$, $\mathbf b=[b^1 \quad b^2]$ and $c$ in the elliptic operator \eqref{bilinearform}, 
consider their grid point values in the following form:
\begin{align*}
   A^{kl}=\begin{pmatrix}
   a_{00} & a_{01} & \dots & a_{0,n_x+1}\\
   a_{10} & a_{11} & \dots & a_{1,n_x+1}\\
   \vdots & \vdots & & \vdots \\
      a_{n_y+1,0} & a_{n_y+1,1} & \dots & a_{n_y+1,,n_x+1}
   \end{pmatrix}_{(n_y+2)\times (n_x+2)},\quad a_{ij}=a^{kl}(x_j,y_i), \quad k,l =1,2, 
   \end{align*}
\begin{align*}
   B^{m}=\begin{pmatrix}
   b_{00} & b_{01} & \dots & b_{0,n_x+1}\\
   b_{10} & b_{11} & \dots & b_{1,n_x+1}\\
   \vdots & \vdots & & \vdots \\
   b_{n_y+1,0} & b_{n_y+1,1} & \dots & b_{n_y+1,n_x+1}
   \end{pmatrix}_{(n_y+2)\times (n_x+2)},\quad b_{ij}=b^{m}(x_j,y_i), \quad m =1,2,
\end{align*}
\begin{align*}
   C=\begin{pmatrix}
   c_{00} & c_{01} & \dots & c_{0,n_x+1}\\
   c_{10} & c_{11} & \dots & c_{1,n_x+1}\\
   \vdots & \vdots & & \vdots \\
   c_{n_y+1,0} & c_{n_y+1,1} & \dots & c_{n_y+1,n_x+1}
   \end{pmatrix}_{(n_y+2)\times (n_x+2)},\quad c_{ij}=c(x_j,y_i).
\end{align*}

Let $diag(\mathbf x)$ denote a diagonal matrix with the vector $\mathbf x$ as diagonal entries
and define \[
        \bar{W}_x=diag\begin{pmatrix}
\frac13,\frac43,\frac23,\frac43,\frac23,\dots,\frac23,\frac43,\frac13
\end{pmatrix}_{(n_x+2)\times (n_x+2)},\]
\[   \bar{W}_y=diag\begin{pmatrix}
\frac13,\frac43,\frac23,\frac43,\frac23,\dots,\frac23,\frac43,\frac13
\end{pmatrix}_{(n_y+2)\times (n_y+2)},
\]
\[
        {W}_x=diag\begin{pmatrix}
\frac43,\frac23,\frac43,\frac23,\dots,\frac23,\frac43
\end{pmatrix}_{n_x \times n_x}, {W}_y=diag\begin{pmatrix}
\frac43,\frac23,\frac43,\frac23,\dots,\frac23,\frac43
\end{pmatrix}_{n_y \times n_y }.
\]
Let $s=x$ or $y$, we define the $D$ and $E$ matrices with dimension ${(n_s+2)\times (n_s+2)}$ for each variable: 
  \begin{align*}
D_s=\frac{1}{2}\left( \begin{smallmatrix}
-3 & 4 & -1 & & & & & & \\
-1 & 0 & 1 & & & & & & \\
\frac12 & -2 & 0 & 2 & -\frac12 & & &  & &\\
& & -1 & 0 & 1 & & &   \\
& & \frac12 & -2 & 0 & 2 & -\frac12 & & \\
& & & & -1 & 0 & 1 &\\
& & & & & \ddots & \ddots & \ddots &  & \\
& &  & & & -1 & 0 & 1 &\\
& & & & &  \frac12 & -2 & 0 & 2 & -\frac12\\
& & & & & & & -1 & 0 & 1\\
& & & & & & & 1 & -4 & 3
\end{smallmatrix}\right), \quad 
E_s=\frac{1}{2}\left( \begin{smallmatrix}
0 & 0 & 0 & & & & & & \\
0 & 0 & 0 & & & & & & \\
-\frac12 & 2 & -3 & 2 & -\frac12 & & &  & &\\
& & 0 & 0 & 0 & & &   \\
& & -\frac12 & 2 & -3 & 2 & -\frac12 & & \\
& & & & 0 & 0 & 0 &\\
& & & & & \ddots & \ddots & \ddots &  & \\
& &  & & & 0 & 0 & 0 &\\
& & & & &  -\frac12 & 2 & -3 & 2 & -\frac12\\
& & & & & & & 0 & 0 & 0\\
& & & & & & & 0 & 0 & 0
\end{smallmatrix}\right).
 \end{align*}
 
 Define an inflation operator
$Infl: \mathbbm R^{n_y\times n_x}\longrightarrow \mathbbm R^{(n_y+2)\times (n_x+2)}$ by adding zeros:
\[Infl(U)=\begin{pmatrix}
        0 & \cdots & 0\\
        \vdots & U & \vdots\\
       0 & \cdots & 0\\
        \end{pmatrix}_{(n_y+2)\times (n_x+2)}
\]
and its matrix representation is given as $\tilde I_x \otimes \tilde I_y$ where
\[\tilde I_x=\begin{pmatrix}
                                             \mathbf 0 \\ I_{n_x\times n_x} \\ \mathbf 0
                                            \end{pmatrix}_{(n_x+2)\times n_x},
                                            \tilde I_y=\begin{pmatrix}
                                             \mathbf 0 \\ I_{n_y\times n_y} \\ \mathbf 0
                                            \end{pmatrix}_{(n_y+2)\times n_y}.
\]
Its adjoint is
a restriction operator $Res: \mathbbm R^{(n_y+2)\times (n_x+2)} \longrightarrow\mathbbm R^{n_y\times n_x} $ as 
$$Res(X)=X(2:n_y+1, 2:n_x+1)\quad, \forall X \in \mathbbm R^{(n_y+2)\times (n_x+2)},$$  and its matrix representation is 
$\tilde I_x^T \otimes \tilde I_y^T.$

\subsection{Two-dimensional case}
For $\bar \Omega=[0,1]^2$ we first consider 
an elliptic equation with homogeneous Neumann boundary condition: 
\begin{align}
 -\nabla \cdot(\mathbf a \nabla u  ) +\mathbf b\nabla u + c u = & f \textrm{ on } \Omega, \label{ellipticeqn}\\
 \mathbf a \nabla u \cdot \mathbf{n} = & 0 \textrm{ on } \partial\Omega.\label{fd-implementation-hom-neu}
 \end{align}
 The  variational form is to find $u\in H^1(\Omega)$ satisfying
\begin{equation}\label{varpro1}
 A( u, v)=(f,v),\quad \forall v\in H^1(\Omega).
\end{equation}
The $C^0$-$Q^2$ finite element  method with $3\times 3$ Gauss-Lobatto quadrature is to find $u_h\in V^h$ satisfying 
\begin{equation}
\langle \mathbf a \nabla u_h ,\nabla v_h \rangle_h +\langle\mathbf b\nabla u_h ,v_h\rangle_h+ \langle c u_h,v_h\rangle_h=\langle f,v_h\rangle_h,\quad \forall v_h\in V^h,
\label{2d-fd-neumann}
\end{equation}

 Let $\bar U$ be a $(n_y+2)\times(n_x+2)$ matrix such that its $(j,i)$-th entry is $\bar U(j,i)=u_h(x_{i-1},y_{j-1})$, $i=1,\dots,n_x+2$, $j=1,\dots,n_y+2$. Let $\bar F$ be a $(n_y+2)\times(n_x+2)$ matrix such that its $(j,i)$-th entry is $\bar F(j,i)=f(x_{i-1},y_{j-1})$. Then the matrix form of \eqref{2d-fd-neumann} is\begin{equation}
\label{2d-fd-neumann-2}
\bar Svec(\bar U) = \bar M vec(\bar F), \quad \bar M=h_xh_y \bar W_x\otimes \bar W_y, \quad \bar S= \sum_{k,l=1}^2 S_a^{kl}+ \sum_{m=1}^2 S_b^m  +S_c,
\end{equation}
where 
\begin{align*}
 &S_a^{11}=\frac{h_y}{h_x}(D_x^T\otimes I_y)diag(vec(\bar W_y A^{11}\bar W_x))(D_x\otimes I_y)+ \frac{h_y}{h_x}(E_x^T \otimes I_y)diag(vec(\bar W_y A^{11}\bar W_x))( E_x\otimes I_y), \\
 &S_a^{12}=(D_x^T\otimes I_y)diag(vec(\bar W_y A^{12}\bar W_x))(I_x\otimes D_y)+ (E_x^T \otimes I_y)diag(vec(\bar W_y A^{12}\bar W_x))(I_x\otimes E_y), \\
&S_a^{21}=(I_x\otimes D_y^T)diag(vec(\bar W_y A^{21}\bar W_x))(D_x\otimes I_y)+ (I_x\otimes E_y^T)diag(vec(\bar W_y A^{21}\bar W_x))( E_x\otimes I_y),\\\
&S_a^{22}=\frac{h_x}{h_y}(I_x\otimes D_y^T)diag(vec(\bar W_y A^{22}\bar W_x))(I_x\otimes D_y)+ \frac{h_x}{h_y}(I_x\otimes E_y^T)diag(vec(\bar W_y A^{22}\bar W_x))(I_x\otimes E_y),\\
&S_b^1  = h_y diag(vec(\bar W_y B^1\bar W_x))(D_x\otimes I_y),\quad 
S_b^2  = h_x diag(vec(\bar W_y B^2\bar W_x))(I_x\otimes D_y),\\
&S_c  = h_x h_y diag(vec(\bar W_y C\bar W_x).
\end{align*}

Now consider the scheme \eqref{nonhom-var-num3} for nonhomogeneous Dirichlet boundary conditions. 
Its numerical solution can be represented as a matrix $U$ of size $ny\times nx$ with $(j,i)$-entry $U(j,i)=u_h(x_i, y_j)$ for $i=1,\cdots, nx; j=1,\cdots, ny$. Similar to the one-dimensional case, its stiffness matrix can be obtained as the submatrix of $\bar S$ in \eqref{2d-fd-neumann-2}.
 Let $\bar G$ be a $(n_y+2)$ by $(n_x+2)$ matrix with $(j,i)$-th entry as $\bar G(j,i)=g(x_{i-1},y_{j-1})$, where 
 \[g(x,y)=\begin{cases}
   0,& \mbox{if}\quad (x,y)\in (0,1)\times(0,1),\\
   g(x,y),& \mbox{if}\quad (x,y)\in \partial\Omega.\\
  \end{cases}
\] 
 In particular, $\bar G(j+1,i+1) = 0$ for $j=1,\dots,n_y$, $i=1,\dots,n_x$.
 Let  $F$ be a matrix of size $ny\times nx$ with $(j,i)$-entry as $F(j,i)=f(x_i, y_j)$
 for $i=1,\cdots, nx; j=1,\cdots, ny$.
 Then  the scheme \eqref{nonhom-var-num3}
becomes
\begin{equation}
\label{2d-fd-scheme-nonhomo-dirichlet}
(\tilde I_x^T\otimes \tilde I_y^T)\bar S(\tilde I_x\otimes \tilde I_y)vec(U) = (W_x\otimes W_y)vec(F)-(\tilde I_x^T\otimes \tilde I_y^T)\bar S vec(\bar G).
\end{equation}
Even though the stiffness matrix is given as $S=(\tilde I_x^T\otimes \tilde I_y^T)\bar S(\tilde I_x\otimes \tilde I_y)$, $S$ should be implemented as a linear operator  in iterative linear system solvers. For example, the matrix vector multiplication $(\tilde I_x^T\otimes \tilde I_y^T)S^{11}_a(\tilde I_x\otimes \tilde I_y)vec(U)$ is equivalent to the following linear operator from $\mathbbm R^{ny\times nx}$ to $\mathbbm R^{ny\times nx}$:
\[ \frac{h_y}{h_x}\tilde I_y^T\left\{I_y\left([\bar W_y A^{11}\bar W_x]\circ[I_y(\tilde I_y U \tilde I^T_x)D_x^T]\right)D_x+I_y\left([\bar W_y A^{11}\bar W_x]\circ[I_y(\tilde I_y U \tilde I^T_x)E_x^T]\right)E_x\right\}\tilde I_x,\]
where $\circ$ is the Hadamard product (i.e., entrywise multiplication).  
\subsection{The Laplacian case}
\label{sec-Laplacian}
For one-dimensional constant coefficient case with homogeneous Dirichlet boundary condition, the scheme can be written as a classical finite difference scheme  $H \mathbf u=\mathbf f$ with 
\[H=M^{-1}S=\frac{1}{h^2}\left(\begin{smallmatrix}
   2& -1 & & & & &\\
   -2& \frac72 &-2 & \frac14 & & &\\
     &  -1 & 2& -1 & & &\\
   & \frac14 &-2& \frac72 &-2 & \frac14 & \\
  & &   &  -1 & 2& -1 & \\
  & & & &\ddots &\ddots & \\
  && & \frac14 &-2& \frac72 &-2\\
   &  & & & &-1 & 2\\
  \end{smallmatrix}\right)\]
In other words,  if $x_i$ is a cell center, the scheme is
  \[\frac{-u_{i-1}+2u_i-u_{i+1}}{h^2}=f_i,\]
and if $x_i$ is a knot away from the boundary, the scheme is
\begin{equation*}
\frac{u_{i-2}-8u_{i-1}+14u_i-8u_{i+1}+u_{i+2}}{4h^2}=f_i.
\end{equation*}
It is straightforward to verify that the local truncation error is only second order. 

For the two-dimensional Laplacian case homogeneous Dirichlet boundary condition, the 
scheme can be rewritten as 
\[(H_x\otimes I_y)+(I_x\otimes H_y)vec(U)=vec(F),\]
where $H_x$ and $H_y$ are the same $H$ matrix above with size $n_x\times n_x$ and $n_y\times n_y$ respectively.
The inverse of $(H_x\otimes I_y)+(I_x\otimes H_y)$ can be efficiently constructed via the eigen-decomposition of small matrices $H_x$ and $H_y$:
\begin{enumerate}
 \item Compute eigen-decomposition of $H_x=T_x \Lambda_x T_x^{-1}$ and $H_y=T_y \Lambda_y T_y^{-1}$.
 \item The properties of Kronecker product imply that 
 \[(H_x\otimes I_y)+(I_x\otimes H_y)=(T_x\otimes T_y)(\Lambda_x\otimes I_y+I_x\otimes \Lambda_y)(T_x^{-1}\otimes T_y^{-1}),\]
 thus 
  \[[(H_x\otimes I_y)+(I_x\otimes H_y)]^{-1}=(T_x\otimes T_y)(\Lambda_x\otimes I_y+I_x\otimes \Lambda_y)^{-1}(T_x^{-1}\otimes T_y^{-1}).\]
  \item It is nontrivial to determine whether $H$ is diagonalizable. In all our numerical tests, $H$
has no repeated eigenvalues. So if assuming $\Lambda_x$ and $\Lambda_y$ are diagonal matrices, the matrix vector multiplication $[(H_x\otimes I_y)+(I_x\otimes H_y)]^{-1}vec(F)$ can be implemented as a linear operator on $F$:
\begin{equation}
 T_y([T_y^{-1} F(T_x^{-1})^T]./\Lambda)T_x^T, 
 \label{2d-preconditioner}
 \end{equation}
where $\Lambda$ is a $n_y\times n_x$ matrix with $(i,j)$-th entry as $\Lambda(i,j)=\Lambda_y(i,i)+\Lambda_x(j,j)$
and $./$ denotes entry-wise division for two matrices of the same size. 
\end{enumerate}

For the 3D Laplacian, the  matrix can be represented as $H_x\otimes I_y\otimes I_z+I_x\otimes H_y\otimes I_z+I_x\otimes I_y\otimes H_z$ thus can be efficiently inverted through eigen-decomposition of small matrices $H_x, H_y$ and $H_z$ as well.

Since the eigen-decomposition of small matrices $H_x$ and $H_y$ can be precomputed, and \eqref{2d-preconditioner} costs only $\mathcal O(n^3)$ for a 2D problem on a mesh size $n\times n$, in practice \eqref{2d-preconditioner} can be used as a simple preconditioner in conjugate gradient solvers for 
the following linear system equivalent to \eqref{2d-fd-scheme-nonhomo-dirichlet}:
\[ (W_x^{-1}\otimes W_y^{-1})(\tilde I_x^T\otimes \tilde I_y^T)\bar S(\tilde I_x\otimes \tilde I_y)vec(U) = vec(F)-(W_x^{-1}\otimes W_y^{-1})(\tilde I_x^T\otimes \tilde I_y^T)\bar S vec(G), \]
even though the multigrid method as reviewed in \cite{xu2017algebraic} is the optimal solver in terms of computational complexity.

\section{Numerical results}
\label{sec-test}

In this section we show a few numerical tests verifying the accuracy of the scheme \eqref{nonhom-var-num3} for $k=2$ implemented as a finite difference scheme on a uniform grid. 
We first consider the following two dimensional elliptic equation:  
\begin{equation}
 - \nabla\cdot(\mathbf a\nabla u)+\mathbf b\cdot\nabla u+c u=f\quad \textrm{on } [0,1]\times [0,2]
 \label{test-eqn-1}
\end{equation}
where $\mathbf a=\left( {\begin{array}{cc}
   a_{11} & a_{12} \\
   a_{21} & a_{22} \\
  \end{array} } \right)$, $a_{11}=10+30y^5+x\cos{y}+y$, $a_{12}=a_{21}=2+0.5(\sin(\pi x)+x^3)(\sin(\pi y)+y^3)+\cos(x^4+y^3)$, $a_{22}=10+x^5$, $\mathbf b=\mathbf 0$, $c=1+x^4y^3$, with an exact solution
  $$u(x,y)=0.1(\sin(\pi x)+x^3)(\sin(\pi y)+y^3)+\cos(x^4+y^3).$$

The errors at grid points are listed in Table  \ref{dirichlet_elliptic} for purely Dirichlet boundary condition and Table \ref{neumann_elliptic} for purely Neumann boundary condition. We observe fourth order accuracy in the discrete 2-norm for both tests, even though only $\mathcal O(h^{3.5})$ can be proven for Neumann boundary condition  as discussed in Remark \ref{remark-loss}.  Regarding the maximum norm of the superconvergence of the function values at Gauss-Lobatto points,  one can only prove $\mathcal O(h^3\log h)$ even for the full finite element scheme \eqref{intro-scheme1}
since discrete Green's function is used, see \cite{chen2001structure}.

 \begin{table}[h]
\label{dirichlet_elliptic}
\centering
\caption{A 2D elliptic equation with Dirichlet boundary conditions. The first column is the number of regular cells in a finite element mesh. The second column is the number of grid points in a finite difference implementation, i.e., number of degree of freedoms. }
\begin{tabular}{|c |c |c c|c c|}
\hline  FEM Mesh &  FD Grid & $l^2$ error  &  order & $l^\infty$ error & order \\
\hline
 $2\times 4$ & $3\times 7$ & 3.94E-2 & - & 7.15E-2 & -\\
\hline
 $4\times 8$&  $7\times 15$& 1.23E-2 & 1.67 & 3.28E-2 & 1.12\\
\hline
 $8\times 16$& $15\times 31$ & 1.46E-3 & 3.08 & 5.42E-3 & 2.60 \\
\hline
$16\times 32$ & $31\times 63$ & 1.14E-4 & 3.68 & 3.96E-4 & 3.78 \\ 
 \hline
 $32\times 64$ &  $63\times 127$ & 7.75E-6 & 3.88 & 2.62E-5 & 3.92\\
  \hline
 $64\times 128$ &  $127\times 255$ & 5.02E-7 & 3.95 & 1.73E-6 & 3.92\\
   \hline
 $128\times 256$&   $255\times 511$ & 3.23E-8 & 3.96 & 1.13E-7 & 3.94\\
\hline
\end{tabular}
\end{table}

\begin{table}[h]
\label{neumann_elliptic}
\centering
\caption{A 2D elliptic equation with Neumann boundary conditions.}
\begin{tabular}{|c |c|c c|c c|}
\hline FEM Mesh &FD Grid  &  $l^2$ error  &  order & $l^\infty$ error & order \\
\hline
 $2\times 4$& $5\times 9$ & 1.38E0 & - & 2.27E0 & -\\
\hline
$4\times 8$  & $9\times 17$ & 1.46E-1 & 3.24 & 2.52E-1 & 3.17\\
\hline
$8\times 16$  & $17\times 33$ & 7.49E-3 & 4.28 & 1.64E-2 & 3.94 \\
\hline
$16\times 32$ & $33\times 65$& 4.31E-4 & 4.12 & 1.02E-3 & 4.01 \\ 
 \hline
 $32\times 64$ &$65\times 129$ & 2.61E-5 & 4.04 & 7.47E-5 & 3.78\\
\hline
\end{tabular}
\end{table}

Next we consider a three-dimensional problem $-\Delta u=f$ with homogeneous Dirichlet boundary conditions on a cube $[0,1]^3$ with the following exact solution 
$$u(x,y,z)=\sin(\pi x)\sin(2\pi y)\sin(3\pi z)+(x-x^3)(y^2-y^4)(z-z^2).$$
See Table \ref{dirichlet_elliptic3D} for the performance of the finite difference scheme. There is no essential difficulty to extend the proof to three dimensions, even though  it is not very straightforward. Nonetheless we observe that the scheme is indeed fourth order accurate. The linear system is solved by the eigenvector method shown in Section \ref{sec-Laplacian}. The discrete 2-norm over the set of all grid points $Z_0$ is defined as 
$\|u\|_{2,Z_0}=\left[h^3\sum_{(x,y,z)\in Z_0} |u(x,y,z)|^2\right]^{\frac12}$.

\begin{table}[h]
\label{dirichlet_elliptic3D}
\centering
\caption{$-\Delta u=f$ in 3D with homogeneous Dirichlet boundary condition.}
\begin{tabular}{|c|c c|c c|}
\hline Finite Difference Grid   &  $l^2$ error  &  order & $l^\infty$ error & order \\
\hline
 $7\times 7\times 7$  & 1.51E-2 & - & 4.87E-2 & -\\
\hline
 $15\times 15\times 15$  & 9.23E-4 & 4.04 & 3.12E-3 & 3.96 \\
\hline
$31\times 31\times 31$   & 5.68E-5 & 4.02 & 1.95E-4 & 4.00 \\ 
 \hline
 $63\times 63\times 63$   & 3.54E-6 & 4.01 & 1.22E-5 & 4.00\\
  \hline
 $127\times 127\times 127$  & 2.21E-7 & 4.00 & 7.59E-7 & 4.00\\
\hline
\end{tabular}
\end{table}

Last we consider \eqref{test-eqn-1} with convection term and the coefficients $\mathbf b$ is incompressible $\nabla\cdot \mathbf b=0$: $\mathbf a=\left( {\begin{array}{cc}
   a_{11} & a_{12} \\
   a_{21} & a_{22} \\
  \end{array} } \right)$, $a_{11}=100+30y^5+x\cos{y}+y$, $a_{12}=a_{21}=2+0.5(\sin(\pi x)+x^3)(\sin(\pi y)+y^3)+\cos(x^4+y^3)$, $a_{22}=100+x^5$, $\mathbf b=\left( {\begin{array}{c}
   b_{1} \\
   b_{2} \\
  \end{array} } \right)$, $b_1=\psi_y$, $b_2=-\psi_x$, $\psi = x\exp(x^2+y)$, $c=1+x^4y^3$, with an exact solution
  $$u(x,y)=0.1(\sin(\pi x)+x^3)(\sin(\pi y)+y^3)+\cos(x^4+y^3).$$

The errors at grid points are listed in Table \ref{dirichlet_elliptic_withconvec} for   Dirichlet boundary conditions.

  \begin{table}[h]
\centering
\caption{A 2D elliptic equation with convection term and Dirichlet boundary conditions.}
\begin{tabular}{|c |c |c c|c c|}
\hline  FEM Mesh &  FD Grid & $l^2$ error  &  order & $l^\infty$ error & order \\
\hline
 $2\times 4$ & $3\times 7$ & 1.26E-1 & - & 2.71E-1 & -\\
\hline
 $4\times 8$&  $7\times 15$& 2.85E-2 & 2.15 & 9.70E-2 & 1.48\\
\hline
 $8\times 16$& $15\times 31$ & 1.89E-3 & 3.92 & 7.25E-3 & 3.74 \\
\hline
$16\times 32$ & $31\times 63$ & 1.17E-4 & 4.01 & 4.01E-4 & 4.17 \\ 
 \hline
 $32\times 64$ &  $63\times 127$ & 7.41E-6 & 3.98 & 2.54E-5 & 3.98\\
  \hline
\end{tabular}
\label{dirichlet_elliptic_withconvec}
\end{table}
\section{Concluding remarks}
\label{sec-conclusion}
In this paper we have proven the superconvergence of function values in the simplest finite difference implementation of $C^0$-$Q^k$ finite element method for elliptic equations. In particular, for the case $k=2$ the scheme \eqref{nonhom-var-num3} can be easily implemented as a fourth order accurate finite difference scheme as shown in Section \ref{sec-fd}. It provides only only an convenient approach for constructing fourth order accurate finite difference schemes but also the most efficient implementation of $C^0$-$Q^k$ finite element method without losing superconvergence of function values. 
In a follow up paper \cite{li2019dmp}, we will show that discrete maximum principle can be proven for
the scheme \eqref{nonhom-var-num3} in the case $k=2$ when solving a variable coefficient Poisson equation.

\bibliographystyle{siamplain}
\bibliography{ref.bib}

\end{document}